\newcommand{\blob}{\rule[.2ex]{.8ex}{.8ex}}
\newcommand{\R}{\mathbb{R}}
\newcommand{\C}{\mathbb{C}}
\newcommand{\N}{\mathbb{N}}
\newcommand{\Z}{\mathbb{Z}}
\newcommand{\T}{\mathbb{T}}
\newcommand{\SL}{{\rm SL}}
\newcommand{\gl}{{\rm Mat}}
\newcommand{\Mat}{{\rm Mat}}
\newcommand{\V}{\mathscr{V}}
\newcommand{\sabs}[1]{\left| #1 \right|} 
\newcommand{\abs}[1]{\bigl| #1 \bigr|} 
\newcommand{\norm}[1]{\lVert#1\rVert} 
\newcommand{\bnorm}[1]{\Bigl\| #1\Bigr\|} 
\newcommand{\intpart}[1]{\lfloor #1 \rfloor} 
\newcommand{\transl}{{T}} 
\newcommand{\less}{\lesssim}
\newcommand{\ep}{\epsilon}
\newcommand{\cocycles}{\mathcal{C}}
\newcommand{\observables}{\Xi}
\newcommand{\param}{\underline{p}}
\newcommand{\params}{\mathscr{P}}
\newcommand{\gapc}{\kappa}
\newcommand{\nzerobar}{\underline{n_0}}
\newcommand{\nzz}{n_{0 0}}
\newcommand{\nzo}{n_{0 1}}
\newcommand{\mesfs}{\mathscr{I}}
\newcommand{\devfs}{\mathscr{E}}
\newcommand{\dev}{\ep}                
\newcommand{\devf}{\underline{\dev}}  
\newcommand{\mes}{\iota}              
\newcommand{\mesf}{\underline{\mes}}  
\newcommand{\scale}{\mathscr{N}}  
\newcommand{\An}[1]{A^{({#1})}}  
\newcommand{\Bn}[1]{B^{({#1})}}  
\newcommand{\Lan}[2]{L^{({#1})}_{#2}}
\newcommand{\comp}{^{\complement}}
\newcommand{\U}{\mathscr{U}}
\newcommand{\Oo}{\mathscr{O}}                
\newcommand{\ind}{\mathds{1}}
\newcommand{\B}{\mathscr{B}}                                                                                                   
\newcommand{\Bbar}{\bar{\mathscr{B}}}
\newcommand{\dist}{{\rm dist}}
\newcommand{\rgap}{{\rm gr}}
\newsavebox{\@brx}
\newcommand{\llangle}[1][]{\savebox{\@brx}{\(\m@th{#1\langle}\)}%
  \mathopen{\copy\@brx\mkern2mu\kern-0.9\wd\@brx\usebox{\@brx}}}
\newcommand{\rrangle}[1][]{\savebox{\@brx}{\(\m@th{#1\rangle}\)}%
  \mathclose{\copy\@brx\mkern2mu\kern-0.9\wd\@brx\usebox{\@brx}}}
\newcommand{\normtwo}[1]{
{\left\vert\kern-0.25ex\left\vert\kern-0.25ex\left\vert #1 
    \right\vert\kern-0.25ex\right\vert\kern-0.25ex\right\vert} } 
\theoremstyle{plain}
\newtheorem{theorem}{Theorem}[section]
\newtheorem{proposition}{Proposition}[section]
\newtheorem{corollary}[proposition]{Corollary}
\newtheorem{lemma}[proposition]{Lemma}
\newtheorem{definition}{Definition}[section]
\numberwithin{equation}{section}
\newtheorem{remark}{Remark}[section]
\title[Continuity of the Lyapunov exponents]{An abstract continuity theorem of the Lyapunov exponents}
\date{}
\begin{document}

\author[P. Duarte]{Pedro Duarte}
\address{Departamento de Matem\'atica and CMAFIO\\
Faculdade de Ci\^encias\\
Universidade de Lisboa\\
Portugal 
}
\email{pmduarte@fc.ul.pt}

\author[S. Klein]{Silvius Klein}
\address{ Department of Mathematical Sciences\\
Norwegian University of Science and Technology (NTNU)\\
Trondheim, Norway\\ 
and IMAR, Bucharest, Romania }
\email{silvius.klein@math.ntnu.no}

\begin{abstract} We devise an abstract, modular scheme to prove continuity of the Lyapunov exponents  for a general class of linear cocycles. The main assumption is the availability of appropriate large deviation type (LDT) estimates which are uniform in the data.  We provide a modulus of continuity that depends explicitly on the sharpness of the LDT estimate. Our method uses an inductive procedure based on the deterministic, general Avalanche Principle in~\cite{LEbook}. The main advantage of this approach, besides the fact that it provides quantitative estimates, is its versatility, as it applies to quasi-periodic cocycles (one and multivariable torus translations), to random cocycles (i.i.d.  and Markov systems) and  to any other types of base dynamics as long as appropriate LDT estimates are satisfied. Moreover, compared to other available quantitative results for quasi-periodic or random cocycles, this method allows for weaker assumptions.

This is a draft of a chapter  in our forthcoming research monograph~\cite{LEbook}.  
\end{abstract}

\maketitle

\section{Definitions, the abstract setup and statement}
\label{abstract_setup}
An ergodic dynamical system $(X, \mathscr{F}, \mu, T)$ consists of a set $X$, a $\sigma$ - algebra $\mathscr{F}$, a probability measure $\mu$ on $(X, \mathscr{F})$ and a transformation $T \colon X \to X$ which is ergodic and measure preserving. 

Two important classes of ergodic dynamical systems are the shift over a stochastic process (i.e. a Bernoulli shift or a Markov shift) and the torus translation by an incommensurable frequency vector.

A linear cocycle over an ergodic system $(X, \mathscr{F}, \mu, T)$ is a skew-product map on the vector bundle $X \times \R^m$ given by
$$ X \times \R^m \ni (x, v) \mapsto (T x, A (x) v) \in X \times \R^m\,.$$

Hence $T$ is the base dynamics while $A$ defines the fiber action. Since the base dynamics will be fixed throughout this paper, we identify the cocycle with just its fiber action $A$.

The iterates of the cocycle are $(\transl^n \, x , \An{n} (x)\,v)$,  where
$$\An{n} (x)= A(\transl^{n-1} \, x) \cdot  \ldots \cdot A(\transl x) \cdot A(x)\,.$$

The {\em Lyapunov exponents} of the cocycle $A$ are the numbers $L_1 (A) \ge L_2 (A) \ge \ldots \ge L_m (A) > - \infty$,  which measure the average exponential growth of the iterates of the cocycle along invariant fiber subspaces  (see \cite{L.Arnold}). Their multiplicities are correlated with the dimensions of the invariant fiber subspaces. The Lyapunov exponents can also be described, via Kingman's subadditive ergodic theorem, in terms of singular values of iterates of the cocycle as follows:
$$L_k (A) = \lim_{n\to\infty} \int_{\T^d} \frac{1}{n} \, \log s_k [ \An{n} (x)  ] \, d x\,, $$
where $s_1 (g) \ge s_2 (g) \ge \ldots \ge s_m (g) \ge 0$ denote the singular values of a matrix $g \in \Mat (m, \R)$. 
In particular, since the largest singular value $s_1 (g) = \norm{g}$, we have
$$L_1 (A) = \lim_{n\to\infty} \int_{\T^d} \frac{1}{n} \, \log \norm{ \An{n} (x)  } \, d x $$
which is the maximal Lyapunov exponent of $A$ in the sense of Furstenberg.

Given an ergodic system $(X, \mathscr{F}, \mu, T)$, we introduce the main actors - a space of cocycles and a set of observables - and we describe the main assumptions on them - certain uniform large deviation type (LDT) estimates and a uniform $L^p$ - boundedness. Then we formulate an abstract criterion for the {\em continuity} of the corresponding Lyapunov exponents.


\subsection{Cocycles and observables} 
\begin{definition}\label{def:cocycle:space}
A  space of measurable cocycles  $\mathcal{C}$
is any class of matrix valued functions
$A:X\to \Mat_m(\R)$, where $m\in\N$ is not fixed, such that
every  $A:X\to \Mat_m(\R)$ in $\mathcal{C}$ has the following properties:
\begin{enumerate}
\item $A$ is $\mathscr{F}$-measurable,
\item $\norm{A}\in L^\infty(\mu)$,
\item The exterior powers $\wedge_k A: X\to \Mat_{\binom{m}{k}}(\R)$ are in $\mathcal{C}$,
for $k\leq m$.
\end{enumerate}

Each subspace 
$\mathcal{C}_m :=\{\, A\in \mathcal{C} \ | 
A:X\to \Mat_m(\R)\,\} $
is a-priori endowed with a distance 
$\dist \colon \mathcal{C}_m \times \mathcal{C}_m \to [0,+\infty)$ which is at least as fine as the $L^\infty$ distance, i.e. for all $A, B \in \cocycles_m$ we have
$$\dist (B, A) \ge \norm{B-A}_{L^\infty}\,.$$  

We assume a correlation between the distances on each of these subspaces, in the sense that the map 
$$\cocycles_m \ni A \mapsto \wedge_k \, A \in \cocycles_{m \choose k}$$
is locally Lipschitz. 
\end{definition}

\smallskip

Let $A \in \cocycles$ be a measurable cocycle. Since $\norm{A} \in L^\infty$, we have $\log^+ \norm{A} \in L^1$, hence Furstenberg-Kesten's theorem (the non-invertible, one-sided case, see chapter 3 in \cite{L.Arnold}) applies. In particular, if we denote
\begin{equation}\label{def:eq:L1n}
\Lan{n}{1} (A) := \int_X \frac{1}{n} \, \log \norm{ \An{n} (x) } \, \mu (d x)\,,
\end{equation}
then as $n \to \infty$,  $ \Lan{n}{1} (A) \to  L_1 (A)$ (the maximal Lyapunov exponent). 

We call $\Lan{n}{1} (A)$ finite scale (maximal) Lyapunov exponents.

\smallskip

We will need stronger integrability assumptions on the measurable functions $\frac{1}{n} \, \log \norm{ \An{n} (x) }$. 

Let $1 \le p \le \infty$. For simplicity of notations, later on we may assume that $p=2$.

\begin{definition}\label{def:Lp-bounded}
A cocycle $A \in \cocycles$ is called $L^p$ - bounded if there is $C < \infty$, which we call its $L^p$-bound, such that for all $n \ge 1$ we have:
\begin{equation}\label{eq:Lp-bounded}
\bnorm{ \frac{1}{n} \, \log \norm{\An{n} (\cdot)}}_{L^p} < C\,.
\end{equation}
\end{definition}

\begin{definition}\label{def:unif-Lp-bounded}
A cocycle $A \in \cocycles_m$ is called uniformly $L^p$ - bounded if there are $\delta = \delta (A) > 0$ and $C = C(A) < \infty$ such that for all $B \in \cocycles_m$ with $\dist (B, A) < \delta$ and for all $n \ge 1$ we have:
\begin{equation}\label{eq:unif-Lp-bounded}
\bnorm{ \frac{1}{n} \, \log \norm{\Bn{n} (\cdot)}}_{L^p} < C\,.
\end{equation}
\end{definition}

\smallskip

It is not difficult to show that if a cocycle $A \in \cocycles$ satisfies the bounds
$$\norm{ \log \norm{A^{\pm}} }_{L^p} \le C < \infty\,,$$

then for all $n \in \Z$ we have
\begin{equation}\label{log:norm:An}
\norm{ \log \norm{\An{n}} }_{L^p} \le C \abs{n}\,.
\end{equation}

Hence if we assume that
\begin{equation}\label{def:eq:logA^-1Lp}
\log \norm{A^\pm} \in L^p
\end{equation}

holds for all cocycles $A \in \cocycles$, and if we endow $\cocycles_m$  with the distance given by
$$ \dist_p (A, B) := \norm{A-B}_{L^{\infty}} + \norm{ \, \log \norm{A^{-1}} - \log \norm{B^{-1}}}_{L^p} $$
when $1 \le p < \infty$, and by
$$ \dist_\infty (A, B) := \norm{A-B}_{L^{\infty}} \,, $$
when $p = \infty$, then every cocycle $A \in \cocycles$ is {\em uniformly} $L^p$ - bounded.

In the applications we have in this book, the uniform $L^p$- boundedness is automatic.  For instance, in the case of random cocycles, we assume from the beginning the integrability condition~\eqref{def:eq:logA^-1Lp} which implies uniform $L^p$- boundedness.

However, we want that our scheme be applicable also to cocycles that are very singular (i.e. non-invertible everywhere), which is the case of a forthcoming paper on quasiperiodic cocycles, and that is why we make the weaker, uniform $L^p$-boundedness assumption.

\vspace{4pt}

Given a cocycle $A \in \cocycles$ and an integer $N \in \N$, denote by $\mathscr{F}_N (A)$ the algebra generated by the sets $\{ x \in X \colon \norm{\An{n} (x)}  \le c \}$ or $\{ x \in X \colon \norm{\An{n} (x)}  \ge c \}$ where $c \ge 0$ and $0 \le n \le N$.

\smallskip

Let $\observables$ be a set of measurable functions $\xi \colon X \to \R$, which we call observables.

\begin{definition}\label{compatibility_condition}
We say that $\observables$ and $A$ are compatible if for every integer $N \in \N$, for every set $F \in \mathscr{F}_N (A)$ and for every $\epsilon > 0$, there is an observable $\xi \in \observables$ such that:
\begin{equation}\label{compatibility_condition-eq}
\ind_{F} \le \xi \quad \text{and} \quad \int_X \xi \, d \mu \le \mu (F) + \epsilon\,.
\end{equation}
\end{definition}

\subsection{Large deviation type estimates}
As mentioned in the introduction, the main tools in our results are some appropriate large deviations type (LDT) estimates for the given dynamical systems (meaning the base and  the fiber dynamics). A LDT estimate for the base dynamics says that given an observable $\xi \colon X \to \R$, we have
$$\mu \, \{ x \in X \colon \abs{ \frac{1}{n}  \, \sum_{j=0}^{n-1}  \xi (\transl^j x) - \int \xi \, \mu (d x) }  > \dev \} < \mes (n, \dev)\,, $$
where $\dev = o (1)$ and $\mes (n, \dev) \to 0$ (as $n \to \infty$) represent, respectively, the size of the deviation from the mean and the measure of the deviation set. The above inequality should hold for all integers $n \ge n_0 (\xi, \ep)$.

\medskip

In classical probabilities, when dealing with i.i.d. random variables, large deviations are precise asymptotic statements, and the measure of the deviation set decays exponentially. For our purposes, and for the given dynamical systems, we need slightly different types of estimates (not as precise, but for all iterates of the system and satisfying some uniformity properties). Moreover, in some of our applications (e.g. to certain types of quasi-periodic cocycles), the available decay of the measure of the deviation set is not exponential in the number of iterates, but slower than exponential. This is the motivation behind the following formalism.

From now on, $\devf, \, \mesf \colon (0, \infty) \to (0, \infty)$ will represent  functions that describe respectively, the size of the deviation from the mean and the measure of the deviation set. We assume that the deviation size functions $\devf (t)$ are non-increasing. We assume that the deviation set measure functions $\mesf (t)$ are continuous and strictly decreasing to $0$, as $t \to \infty$, at least like a power and at most like an exponential, in other words we assume that:
$$\log t \less \log \frac{1}{\mesf (t)} \less t \quad \text{as } t \to \infty \,.$$ 

Denote by $\phi_{\mesf} (t)$ the inverse of the map $t \mapsto \psi_{\mesf} (t) := t \, [\mesf (t)]^{-1/2}$. We then also assume that the increasing function $\phi_{\mesf} (t)$ does not grow too fast, or more precisely that:
$$\varlimsup_{t \to \infty} \ \frac{\phi_{\mesf} (2t)}{\phi_{\mesf} (t)}  < 2 \,.$$

We will use the notation $\dev_n := \devf (n)$ and $\mes_n := \mesf (n)$ for integers $n$. 

\bigskip

In the applications we have thus far, the deviation size functions are either constant functions $\devf (t) \equiv \ep$ for some $0 < \ep \ll 1$ or powers $\devf (t) \equiv t^{-a}$ for some $a > 0$, while de deviation set measure functions are exponentials $\mesf (t) \equiv e^{- c \, t}$ for some $c > 0$ or sub-exponentials $\mesf (t) \equiv e^{- c \, t^b}$ or $\mesf (t) \equiv e^{- c \, t / (\log t)^b}$.

Let $\devfs$ and $\mesfs$ be some spaces of functions, with $\devfs$ containing deviation size functions $\devf (t)$ and $\mesfs$ containing deviation set measure functions $\mesf (t)$. 
We assume that $\mesfs$ is a convex cone, i.e., the functions $a\cdot\mesf (t)$ and
$\mesf_1 (t)+\mesf_2 (t)$ belong to $\mesfs$ for any $a>0$ and $\mesf_1,\mesf_2\in\mesfs$. 
Let $\params$ be a set of triplets  $\param = (\nzerobar, \devf, \mesf)$, where $\nzerobar \in \N$, $\devf \in \devfs$ and $\mesf \in \mesfs$. An element $\param \in \params$ is called an LDT parameter. Our set of LDT parameters $\params$ should satisfy the condition: for all $\ep > 0$ there is $\param = \param (\ep) = (\nzerobar, \devf, \mesf) \in \params$ such that $\dev_{\nzerobar} \le \ep$, so $\params$ contains LDT parameters with arbitrarily small deviation size functions. 

\medskip


We now define the base and fiber LDT estimates, which are relative to given spaces of deviation functions $\devfs$, $\mesfs$ and set of parameters $\params$. 

\begin{definition}\label{def:base-LDT}
An observable $\xi \colon X \to \R$ satisfies a base-LDT estimate if for every $\ep > 0$ there is $\param = \param (\xi, \ep) \in \params$, $\param = (\nzerobar, \devf, \mesf)$, such that for all $n \ge \nzerobar$ we have $\dev_n \le \ep$ and
\begin{equation}\label{eq:base-LDT}
\mu \, \{ x \in X \colon \abs{ \frac{1}{n} \, \sum_{j=0}^{n-1} \xi (\transl^j x) - \int_X \xi d \mu } > \dev_n \} < \mes_n\,.
\end{equation}
\end{definition}

\begin{definition}\label{def:fiber-LDT}
A measurable cocycle $A \in \mathcal{C}$ satisfies a fiber-LDT estimate if  for every $\ep > 0$ there is $\param = \param (A, \ep) \in \params$, $\param = (\nzerobar, \devf, \mesf)$, such that for all $n \ge \nzerobar$ we have $\dev_n \le \ep$ and
\begin{equation}\label{eq:fiber-LDT}
\mu \, \{ x \in X \colon \abs{ \frac{1}{n} \, \log \norm{ \An{n} (x) } - \Lan{n}{1} (A) } > \dev_n \} < \mes_n\,.
\end{equation}
\end{definition}

\medskip

We will need a stronger form of the fiber-LDT, one that is uniform in a neighborhood of the cocycle, in the sense that estimate~\eqref{eq:fiber-LDT} holds with the same LDT parameter for all nearby cocycles.

\begin{definition}\label{def:unif:fiber-LDT}
A measurable cocycle $A \in \cocycles_m$ satisfies a uniform fiber-LDT  if for all $\ep > 0$ there are $\delta = \delta (A, \ep) > 0$ and $\param = \param (A, \ep) \in \params$, $\param = (\nzerobar, \devf, \mesf)$, such that if  $B \in \cocycles_m$ with $\dist (B, A) < \delta$ and if 
$n \ge \nzerobar$ then $\dev_n \le \ep$ and
\begin{equation}\label{eq:unif-fiber-LDT}
\mu \, \{ x \in X \colon \abs{ \frac{1}{n} \, \log \norm{ \Bn{n} (x) } - \Lan{n}{1} (B) } > \dev_n \} < \mes_n\,.
\end{equation}
\end{definition}


\subsection{The Avalanche Principle} 
\label{ap-subsection}

The scheme described in this paper to derive continuity properties of the Lyapunov exponents of a linear cocycle uses a multiscale analysis argument. This type of argument refers to an inductive procedure on scales (i.e. number of iterates of the system). The inductive step in our method is based upon a deterministic statement, called the Avalanche Principle (AP), relating the norm growth of long products of matrices to the product of norms of individual factors. This principle was first formulated by M. Goldstein and W. Schlag in \cite{GS-Holder}, in the setting of $\SL (2, \C)$ matrices. In Chapter 1 of \cite{LEbook} we prove a general version of the AP (see also our preprint \cite{LEbook-chap2}). We recall here the relevant statement in our version of the AP.

We use the notation $$\rgap (g) = \frac{s_1 (g)}{s_2(g)} \in [1, \infty]$$
for the ratio of the first two singular values of a matrix $g \in \Mat (m, \R)$.

\begin{proposition} \label{AP-practical}
There exists $c>0$ such that
 given $0<\varepsilon<1$,  $0<\varkappa\leq c\,\varepsilon^ 2$ 
and  \,  $g_0, g_1,\ldots, g_{n-1}\in\gl(m,\R)$, \,
 if
\begin{align*}
 & \rgap (g_i) >  \frac{1}{\varkappa} &  \text{for all }  \  0 \le i \le n-1  
\\
 & \frac{\norm{ g_i \cdot g_{i-1} }}{\norm{g_i}  \, \norm{ g_{i-1}}}  >  \varepsilon  & 
 \text{for all }     \ 1 \le i \le n-1  
\end{align*}
then  
\begin{align*} 
&  \sabs{ \log \norm{ g^{(n)} } + \sum_{i=1}^{n-2} \log \norm{g_i} -  \sum_{i=1}^{n-1} \log \norm{ g_i \cdot g_{i-1}} } \less n \cdot \frac{\varkappa}{\varepsilon^2} \;.
\end{align*}
 \end{proposition}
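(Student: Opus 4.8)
The plan is to reduce the estimate to a handful of elementary facts about matrices whose top two singular values are far apart, and then run an induction on the number of factors. For a matrix $g$ with $\rgap(g) > 1/\varkappa$, write $\hatv(g)$ for a unit top right singular vector and $\hatu(g) := g\hatv(g)/\norm{g\hatv(g)}$ for the corresponding top left singular vector. From the identity $\norm{gw}^2 = \sum_j s_j(g)^2\,\abs{\langle w,v_j\rangle}^2$ (for a unit vector $w$, with $v_j$ the right singular vectors) one gets at once
\[
\norm{g}\,\abs{\langle w,\hatv(g)\rangle} \;\le\; \norm{gw} \;\le\; \norm{g}\bigl(\abs{\langle w,\hatv(g)\rangle} + \varkappa\bigr),
\]
and, writing $gw = s_1(g)\langle w,\hatv(g)\rangle\hatu(g) + r$ with $\norm{r}\le s_2(g)$, the sharper statement: if $\abs{\langle w,\hatv(g)\rangle}\ge\varepsilon$ then $gw/\norm{gw}$ is within $O(\varkappa/\varepsilon)$ of $\pm\hatu(g)$ and $\bigl|\log\norm{gw} - \log\norm{g} - \log\abs{\langle w,\hatv(g)\rangle}\bigr| = O(\varkappa^2/\varepsilon^2)$. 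I would first record these, noting that they hold verbatim, with $\varkappa$ replaced by the larger $\varkappa/\varepsilon$ in the errors, if the hypothesis is relaxed to $\rgap(g) > \varepsilon/\varkappa$.

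\emph{Two-matrix input.} Set $b(g,h) := \norm{gh}/(\norm{g}\norm{h})\in(0,1]$. Applying the bounds above first to $h$ and then to $g$ shows that, when $\rgap(g),\rgap(h) > 1/\varkappa$ and $b(g,h) > \varepsilon$, one has $b(g,h) = \abs{\langle\hatu(h),\hatv(g)\rangle} + O(\varkappa)$; in particular $\abs{\langle\hatu(h),\hatv(g)\rangle} \ge \varepsilon - O(\varkappa) \ge \varepsilon/2$ once $\varkappa \le c\varepsilon^2$ with $c$ small. Moreover, from $s_2(gh)\le s_2(g)\,s_1(h) < \varkappa\,\norm{g}\norm{h}$ together with $\norm{gh} > \varepsilon\,\norm{g}\norm{h}$ we get $\rgap(gh) > \varepsilon/\varkappa$ — using \emph{only} the gap of the left factor $g$ — and, by the directional estimate, $\hatu(gh)$ lies within $O(\varkappa/\varepsilon)$ of $\pm\hatu(g)$. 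Thus the image direction of a well-aligned product is controlled by its left factor, and the product retains a gap of order $\varepsilon/\varkappa$, uniformly in how many factors were already multiplied.

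\emph{Induction and telescoping.} Put $g^{(k)} := g_{k-1}\cdots g_0$ and prove by induction on $k\ge 2$ that (i) $\rgap(g^{(k)}) \gtrsim \varepsilon/\varkappa$, (ii) $\hatu(g^{(k)})$ is within $O(\varkappa/\varepsilon)$ of $\pm\hatu(g_{k-1})$, and (iii) $b(g_k,g^{(k)}) \ge \varepsilon/2$, so that the pair $(g_k,g^{(k)})$ again obeys the alignment hypothesis; all three propagate via the two-matrix input applied to that pair, with no $k$-dependent loss, since (i) uses only the left factor $g_k$ and the error in (ii) is re-created rather than accumulated at each step. Granting this, at each step use the exact identity $\log\norm{g^{(k+1)}} = \log\norm{g_k} + \log\norm{g^{(k)}} + \log b(g_k,g^{(k)})$ together with $b(g_k,g^{(k)}) = \abs{\langle\hatu(g_{k-1}),\hatv(g_k)\rangle} + O(\varkappa/\varepsilon) = b(g_k,g_{k-1}) + O(\varkappa/\varepsilon)$ (from the two-matrix input in its $\varkappa/\varepsilon$-gap form applied to $g^{(k)}$, and from (ii)); since both quantities exceed $\varepsilon/2$, where $\log$ is $O(1/\varepsilon)$-Lipschitz, this yields the one-step relation
\[
\log\norm{g^{(k+1)}} = \log\norm{g^{(k)}} + \log\norm{g_k g_{k-1}} - \log\norm{g_{k-1}} + O(\varkappa/\varepsilon^2).
\]
Summing over $2\le k\le n-1$ and using $g^{(2)} = g_1 g_0$, the intermediate $\log\norm{g^{(k)}}$ and $\log\norm{g_{k-1}}$ terms collapse and leave precisely
\[
\Bigl|\,\log\norm{g^{(n)}} + \sum_{i=1}^{n-2}\log\norm{g_i} - \sum_{i=1}^{n-1}\log\norm{g_i g_{i-1}}\,\Bigr| \;\lesssim\; n\cdot\frac{\varkappa}{\varepsilon^2},
\]
which is the claim; the cases $n\le 2$ are immediate.

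\emph{Main obstacle.} The delicate part is the bookkeeping in (i)--(iii): one must verify that appending one more large-gap, well-aligned factor preserves a gap of order $\varepsilon/\varkappa$ and an image direction within $O(\varkappa/\varepsilon)$ of the new last factor, \emph{uniformly in the length of the product}. This is exactly where $\varkappa\le c\varepsilon^2$ is essential, and where using $s_2(gh)\le s_2(g)\,s_1(h)$ (rather than the equally valid $s_2(gh)\le s_1(g)\,s_2(h)$, which would let the product's gap degrade geometrically) is the point. Intuitively each such factor acts as a sharp contraction on the relevant part of projective space, so the $O(\varkappa/\varepsilon)$ directional errors are absorbed anew at every step instead of compounding; the only real work is to chase the implied constants so that $O(\varkappa/\varepsilon)\ll\varepsilon$ and $\rgap\gtrsim\varepsilon/\varkappa\gg 1$ persist throughout. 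Alternatively the statement can be deduced by specializing the general form of the Avalanche Principle referred to above, but the self-contained route just sketched is the one I would carry out.
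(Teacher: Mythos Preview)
The paper does not actually prove this proposition: in Subsection~\ref{ap-subsection} it is only \emph{stated}, with the proof deferred to Chapter~1 of \cite{LEbook} (see also \cite{LEbook-chap2}). So there is no in-paper proof to compare against.

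That said, your argument is correct and is essentially the standard route to the higher-dimensional Avalanche Principle, which is also the one developed in the cited references. The key ingredients you identify --- the singular-value estimates $\norm{g}\,\abs{\langle w,\hatv(g)\rangle} \le \norm{gw} \le \norm{g}(\abs{\langle w,\hatv(g)\rangle}+\varkappa)$, the projective contraction toward $\pm\hatu(g)$ with error $O(\varkappa/\varepsilon)$, the two-matrix relation $b(g,h)=\abs{\langle\hatu(h),\hatv(g)\rangle}+O(\varkappa)$, and crucially the use of $s_2(gh)\le s_2(g)\,s_1(h)$ so that the gap of the growing product is controlled solely by the \emph{left} factor --- are exactly the right ones, and your observation that the directional error in (ii) is ``re-created rather than accumulated'' is precisely why the scheme closes with an $O(n\cdot\varkappa/\varepsilon^2)$ bound rather than something exponential in $n$. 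The telescoping in the final paragraph is carried out correctly: summing your one-step relation over $2\le k\le n-1$ and absorbing $\log\norm{g^{(2)}}=\log\norm{g_1 g_0}$ into the $i=1$ term of $\sum_{i=1}^{n-1}\log\norm{g_i g_{i-1}}$ gives exactly the stated conclusion. The constant $c$ enters where you say it does, to guarantee $O(\varkappa/\varepsilon)\ll\varepsilon$ so that (iii) persists.
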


\subsection{Abstract continuity theorem of the Lyapunov exponents}
\label{act_le}

We are ready to formulate the main result of this paper. 

\begin{theorem}\label{abstract:cont_thm}
Consider an ergodic system $(X,\mathscr{F},\mu, \transl)$, a space of measurable cocycles $\mathcal{C}$, a set of observables $\observables$, a set of LDT parameters $\params$ with corresponding spaces of deviation functions $\devfs$, $\mesfs$, let $ 1 < p \le \infty$ and assume the following:
\begin{enumerate}
\item $\observables$ is compatible with every cocycle $A \in \mathcal{C}$.
\item Every observable $\xi \in \observables$ satisfies a base-LDT.
\item Every $A \in \cocycles$ with $L_1 (A) > - \infty$ is uniformly $L^p$ - bounded.
\item Every  cocycle $A \in \mathcal{C}$ for which $L_1 (A) > L_2 (A)$ satisfies a uniform fiber-LDT.
\end{enumerate}

Then all Lyapunov exponents $L_k \colon \cocycles_m \to [ - \infty, \infty)$, $1 \le k \le m$, $m \in \N$ are continuous functions of the cocycle. 

Moreover, given $A \in \cocycles_m$ and $1 \le k \le m$, if $L_{k} (A) > L_{k+1} (A)$, then locally near $A$ the map $L_1 + L_2 + \ldots + L_k$  has a modulus of continuity  
$\omega (h) :=  [ \mesf \ (c \, \log \frac{1}{h}) ]^{1 - 1/p}$ for some $\mesf = \mesf (A) \in \mesfs$ and $c = c (A) > 0$.
\end{theorem}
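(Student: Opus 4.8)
The plan is to run a multiscale (inductive-on-scales) argument whose inductive step is the Avalanche Principle (Proposition~\ref{AP-practical}), using the uniform fiber-LDT to verify the AP hypotheses at each scale and to control error terms, and using the base-LDT together with compatibility to convert the ``large set'' on which AP fails into a small measure bound. Since $L_1 + \ldots + L_k = L_1(\wedge_k A)$ and, by Definition~\ref{def:cocycle:space}, $A \mapsto \wedge_k A$ is locally Lipschitz and preserves all the standing hypotheses, it suffices to treat the case $k=1$ under the assumption $L_1(A) > L_2(A)$; the case $L_1(A) = L_2(A)$, i.e. no gap, will be handled separately by a soft semicontinuity argument (upper semicontinuity of $L_1 + \ldots + L_k$ is automatic from the definition as an infimum over $n$ of the continuous finite-scale quantities $\Lan{n}{1}$, and lower semicontinuity at a point with $L_k = L_{k+1}$ follows once continuity is known at all points with a gap, by squeezing $L_1 + \ldots + L_k$ between $L_1 + \ldots + L_{k-1} + $ something and using that a degenerate gap forces the relevant exponent to be determined by neighbors). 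So the heart of the matter is: \emph{if $L_1(A) > L_2(A)$, then $L_1$ is continuous at $A$ with the stated modulus.}

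\textbf{The inductive scheme for $k=1$.}
Fix $A$ with gap $L_1(A) > L_2(A)$; then $\rgap(\An{n}(x))$ is large off a set of measure $< \mes_n$ for $n$ large, by applying the uniform fiber-LDT to both $A$ and $\wedge_2 A$ and comparing, which gives a lower bound on $s_1/s_2$ for iterates. One sets up a sequence of scales $n_{j+1} \approx n_j \cdot (\text{slowly growing factor})$—here is where the assumption $\varlimsup_{t\to\infty} \phi_{\mesf}(2t)/\phi_{\mesf}(t) < 2$ is used, to guarantee the geometric growth of scales is compatible with the decay rate of $\mes_n$—and at each scale one shows, for $B$ with $\dist(B,A) < \delta$, an estimate of the form
\begin{equation*}
\abs{ \Lan{n_{j+1}}{1}(B) - \Lan{n_j}{1}(B) } \lesssim \text{(AP error at scale } n_j) + \text{(measure of bad set)}^{1-1/p},
\end{equation*}
obtained by: writing $\Bn{n_{j+1}}$ as a product of $\approx n_{j+1}/n_j$ blocks of length $n_j$; applying Proposition~\ref{AP-practical} pointwise on the (large-measure) set where all the $\rgap$ and the ratio hypotheses hold, with $\varepsilon$ essentially fixed (controlled by the gap and the LDT deviation size) and $\varkappa \approx \mes_{n_j}^{\text{power}}$; integrating the AP conclusion; and on the complementary bad set of measure $\lesssim \mes_{n_j}$ using uniform $L^p$-boundedness and Hölder to bound the contribution by $C \cdot \mes_{n_j}^{1-1/p}$. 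Telescoping in $j$ and summing the (summable, by the power/sub-exponential decay of $\mes$) errors gives that $\Lan{n_j}{1}(B)$ is Cauchy uniformly in $B$, hence $L_1(B) = \lim_j \Lan{n_j}{1}(B)$ with the convergence uniform in $\dist(B,A) < \delta$; since each $\Lan{n_j}{1}$ is continuous (indeed locally Lipschitz in $B$ in the $\dist$ topology), $L_1$ is continuous at $A$.

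\textbf{Extracting the modulus of continuity.}
For the quantitative statement, given $B, B'$ near $A$ with $\dist(B,B') = h$, one chooses the scale $n = n(h)$ at which to stop the telescoping: the finite-scale error $\abs{\Lan{n}{1}(B) - L_1(B)}$ is $\lesssim \mes_n^{1-1/p}$ by the above, and similarly for $B'$, while $\abs{\Lan{n}{1}(B) - \Lan{n}{1}(B')} \lesssim e^{C n} \, h^{1-1/p}$ (or a similar bound) because at a fixed finite scale the map is Lipschitz with a constant that is at worst exponential in $n$—this comes from the telescoping identity for $\log\norm{\cdot}$ over blocks and the uniform $L^p$-bound on the individual log-norms. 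Balancing the two contributions by taking $n \approx c \log \frac{1}{h}$ makes the finite-scale Lipschitz error comparable to a power of $h$ and the tail error equal to $\mes_n^{1-1/p} = [\mesf(c \log \frac1h)]^{1-1/p}$, which dominates (since $\mesf$ decays at most exponentially, $[\mesf(c\log\frac1h)]^{1-1/p} \gtrsim h^{c'}$), yielding the claimed modulus $\omega(h) = [\mesf(c\log\frac1h)]^{1-1/p}$.

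\textbf{Main obstacle.}
The principal technical difficulty I expect is the \emph{uniform} bookkeeping across scales: the AP at scale $n_j$ requires both the singular-value gap hypothesis $\rgap(\Bn{n_j}(x)) > 1/\varkappa_j$ and the cocycle-nondegeneracy hypothesis $\norm{\Bn{n_j}(x)\Bn{n_j}(T^{n_j}x)} / (\norm{\Bn{n_j}(x)}\norm{\Bn{n_j}(T^{n_j}x)}) > \varepsilon$ to hold on a set whose complement is $\lesssim \mes_{n_j}$, \emph{and} one needs these to persist as $j$ increases—i.e. one must propagate, inductively, a lower bound on $\Lan{n_j}{1}(B) - \Lan{n_j}{2}(B)$ bounded away from $0$ uniformly in $j$ and in $B$ near $A$, so that the gap does not degenerate along the induction. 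Closing this induction (showing the gap is maintained, hence $\varepsilon$ can be kept fixed and $\varkappa_j \to 0$, hence $n_j \varkappa_j / \varepsilon^2$ is summable) is the crux; it relies delicately on the choice of scale ratios being neither too fast (so $\varkappa_j = $ a power of $\mes_{n_j}$ stays small relative to $\varepsilon^2$) nor too slow (so errors remain summable), which is exactly what the two growth conditions on $\mesf$ and $\phi_{\mesf}$ are engineered to permit.
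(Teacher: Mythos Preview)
Your overall architecture---reduction to $k=1$ via exterior powers, multiscale induction with the Avalanche Principle as the inductive step, balancing finite-scale comparison against tail error to extract the modulus---matches the paper's. But there is a genuine gap at the point you yourself flag as the main obstacle: verifying and propagating the AP gap hypothesis $\rgap(\Bn{n}(x)) > 1/\varkappa$.

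You write that this follows ``by applying the uniform fiber-LDT to both $A$ and $\wedge_2 A$.'' That does not work under the stated hypotheses. Assumption~(4) grants a uniform fiber-LDT only for cocycles with $L_1 > L_2$; for $\wedge_2 A$ this means $L_2(A) > L_3(A)$, which is \emph{not} assumed (and in fact the case $L_2(A)=-\infty$ must also be covered). Even if a fiber-LDT for $\wedge_2 A$ were available, it would only give $\frac{1}{n}\log\norm{\wedge_2 \Bn{n}(x)} < \Lan{n}{1}(\wedge_2 B) + \dev_n$, and you have no a~priori control on the finite-scale quantity $\Lan{n}{1}(\wedge_2 B)$ for $B$ near $A$---that is precisely the kind of continuity you are trying to prove. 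So your plan to ``propagate, inductively, a lower bound on $\Lan{n_j}{1}(B) - \Lan{n_j}{2}(B)$'' has no mechanism to get started or to close.

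The paper's resolution is a separate ingredient you are missing entirely: a \emph{nearly uniform upper semicontinuity} result (Proposition~\ref{n-unif-usc}) which, using only the base-LDT and compatibility (this is where those hypotheses are actually used---not, as you suggest, to bound the AP bad set), shows via a Katznelson--Weiss stopping-time argument that $\frac{1}{n}\log\norm{\wedge_2 \Bn{n}(x)} \le L_1(\wedge_2 A) + \ep = L_1(A) + L_2(A) + \ep$ for all $B$ close to $A$, all large $n$, and all $x$ off a set of measure $<\mes_n$. Combined with the fiber-LDT lower bound on $\frac{1}{n}\log\norm{\Bn{n}(x)}$ and the inductive closeness $\abs{\Lan{n}{1}(B) - \Lan{n}{1}(A)} < \theta$, this yields the gap bound directly (Lemma~\ref{lemma:usc-gapn}); one never needs to track $\Lan{n}{2}(B)$ through the induction. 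The paper explicitly singles this out as the key departure from the Goldstein--Schlag scheme.

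A secondary issue: you assert that each $\Lan{n}{1}$ is ``continuous (indeed locally Lipschitz)'' in $B$. In the generality of this paper (which allows identically singular cocycles, so $\Bn{n}(x)$ may vanish on a positive-measure set) this is false; the paper only proves (Proposition~\ref{prop:finite-scale}) the weaker statement that $\dist(B_1,B_2) < e^{-C_1 n}$ implies $\abs{\Lan{n}{1}(B_1) - \Lan{n}{1}(B_2)} < \mes_n^{1/2}$, and even this requires the fiber-LDT to control the set where $\norm{\Bn{n}(x)}$ is small. This weaker statement suffices for the balancing argument, but your upper-semicontinuity-via-infimum shortcut for the no-gap case is not available as stated.
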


\vspace{4pt}

The proof of the abstract continuity theorem (ACT) of the Lyapunov exponents will be finalized in Section~\ref{general_cont_thm} and Section~ \ref{modulus_cont}. In Section~\ref{uusc_le} we prove that the upper semicontinuity of the maximal Lyapunov exponent holds uniformly in cocycle and phase, for a large set of phases. While this result is interesting in itself, in our scheme it ensures that in an inductive procedure based on the AP, the  gap condition holds. The inductive procedure, which is a type of multiscale analysis leading to the proof of continuity of the Lyapunov exponents, is described in 
Section~\ref{finite_scale_cont} (the base step) and Section~\ref{inductive_step} (the inductive step). 

We note that the use of the nearly upper semicontinuity of the maximal Lyapunov exponent result in Section~\ref{uusc_le} represents a major point of difference between our inductive procedure and the one employed by M. Goldstein and W. Schlag in \cite{GS-Holder}. It is also what allows us to treat, within this scheme, random models  (see Chapter 5 in ~\cite{LEbook}) and (in a future work) identically singular quasi-periodic models.

\section{Upper semicontinuity of the top Lyapunov exponent}
\label{uusc_le}
Given are an ergodic system $(X,\mathscr{F},\mu, \transl)$, a space of measurable cocycles $\mathcal{C}$, a set of observables $\observables$ and a set of LDT parameters $\params$ with corresponding spaces of deviation functions $\devfs$ and $\mesfs$.



It is well know that the top Lyapunov exponent is upper semicontinuous as a function of the cocycle. Our argument requires a much more precise version of the upper semicontinuity, one that is uniform in the number $n$ of iterates and in the phase $x$. Such results are available, see \cite{JitMavi}, \cite{Furman}, and they are based on a stopping time argument used by Katznelson and Weiss \cite{Katz-Weiss} in their proofs of the Birkhoff's and Kingman's ergodic theorems. However, the results in  \cite{JitMavi}, \cite{Furman} require {\em unique ergodicity} of the system, a property that Bernoulli and Markov shifts do not satisfy.  By replacing unique ergodicity with a weaker property - namely that a base-LDT holds for a large enough set of observables, which we show later to hold for Markov shifts  - we obtain a (weaker) version of the uniform upper semicontinuity in  \cite{JitMavi}, one which holds for a large enough set of phases.


\begin{proposition}[nearly uniform upper semicontinuity]\label{n-unif-usc}
Let $A \in \cocycles_m$ be a measurable cocycle such that $\observables$ and $A$ are compatible
and every observable $\xi \in \observables$ satisfies a base-LDT with corresponding LDT parameter in $\params$.

\begin{enumerate}
\item[(i)] Assume that $L_1 (A) > - \infty$ and that $A$ is $L^1$-bounded. 
\end{enumerate}
For every $\epsilon > 0$, there are $\delta = \delta(A, \epsilon) > 0$, $n_0 = n_0 (A, \epsilon) \in \N$ and $\mesf = \mesf (A, \ep) \in \mesfs$, such that if $B \in \mathcal{C}_m$ with $d (B, A) < \delta$, and if $n \ge n_0$, then the upper bound
\begin{equation}\label{n-unif-usc-eq}
\frac{1}{n} \, \log \norm{B^{(n)} (x) } \le L_1 (A) + \epsilon
\end{equation}
holds for all $x$ outside of a set of measure $ < \mes_n$.

Up to a zero measure set, the exceptional set depends only on $A$, $\ep$.

\vspace{2mm}

\begin{enumerate}
\item[(ii)] Assume that $L_1 (A) = - \infty$.
\end{enumerate}
For every $t < \infty$, there are $\delta = \delta(A, t) > 0$, $n_0 = n_0 (A, t) \in \N$ and $\mesf = \mesf (A, t) \in \mesfs$, such that if $B \in \mathcal{C}_m$ with $d (B, A) < \delta$, and if $n \ge n_0$, then the upper bound
\begin{equation}\label{n-unif-usc-eq-infty}
\frac{1}{n} \, \log \norm{B^{(n)} (x) } \le -  t
\end{equation}
holds for all $x$ outside of a set of measure $ < \mes_n$.


Up to a zero measure set, the exceptional set depends only on $A$, $t$.
\end{proposition}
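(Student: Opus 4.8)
The plan is to follow the Katznelson–Weiss stopping time strategy, but to replace the use of unique ergodicity by the base-LDT assumption on a suitable observable. The key observation is that upper semicontinuity of $L_1$ is really a statement about subadditivity: the sequence $n \mapsto \log\norm{\An{n}(x)}$ is subadditive along orbits, so its finite-scale averages $\Lan{n}{1}(A)$ decrease to $L_1(A)$. Thus, given $\ep>0$, first fix a scale $N = N(A,\ep)$ with $\Lan{N}{1}(A) < L_1(A) + \ep/4$ in case (i) (or with $\Lan{N}{1}(A) < -2t$ in case (ii)), and moreover, since $A \mapsto \Lan{N}{1}(A)$ is continuous in the $L^\infty$-distance (which Definition \ref{def:cocycle:space} dominates), shrink $\delta$ so that $\Lan{N}{1}(B) < L_1(A) + \ep/2$ for all $B$ with $\dist(B,A)<\delta$. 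Here the $L^1$-boundedness hypothesis in (i) guarantees that $\Lan{N}{1}(B)$ is finite and that the relevant integrals are under control uniformly in the neighborhood.

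Next I would introduce the observable. Let $\xi_B(x) := \tfrac1N \log\norm{\Bn{N}(x)}$; this is $\mathscr{F}_N(B)$-measurable and, by the compatibility hypothesis, can be dominated (in the sense of \eqref{compatibility_condition-eq}) by an observable in $\observables$ with integral arbitrarily close to $\int \xi_B \, d\mu = \Lan{N}{1}(B)$. One must be slightly careful: we want an exceptional set depending only on $A$ and $\ep$, not on $B$. To achieve this, I would instead work with an observable $\xi \in \observables$ dominating $\tfrac1N \log^+\bigl(\sup_{\dist(B,A)<\delta}\norm{\Bn{N}(x)}\bigr)$ — or, more simply, dominating $\tfrac1N\log\norm{\An{N}(x)} + (\text{a constant reflecting the }L^\infty\text{-variation of }A\mapsto\An{N})$ — with $\int\xi\,d\mu$ still below $L_1(A)+\ep$ (resp. $-t$ in case (ii), using that $\Lan{N}{1}(A)<-2t$ leaves room). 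Apply the base-LDT to this single observable $\xi$: for $n$ large there is an exceptional set $S_n$ of measure $<\mes_n$, depending only on $A,\ep$, outside of which $\tfrac1n\sum_{j=0}^{n-1}\xi(\transl^j x)$ is within $\devf(n)$ of $\int\xi\,d\mu$.

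The final step is the stopping time / subadditivity argument on the good set. Fix $x \notin S_n$ and write $n = qN + r$ with $0\le r < N$. By submultiplicativity of the operator norm, $\log\norm{\Bn{n}(x)}$ is bounded by a sum of $\log\norm{\Bn{N}(\transl^{jN}x)}$ over the $q$ full blocks plus a bounded remainder term (controlled by the $L^\infty$-bound on $\norm{B}$, hence on $\norm{A}$ after shrinking $\delta$). Each block term is $N\xi_B(\transl^{jN}x) \le N\xi(\transl^{jN}x)$ by our choice of $\xi$. Summing over an average of the $N$ possible residue classes — the standard device for passing from a sum over a sublattice to the full Birkhoff sum — gives $\tfrac1n\log\norm{\Bn{n}(x)} \le \tfrac1n\sum_{j=0}^{n-1}\xi(\transl^j x) + o(1) \le \int\xi\,d\mu + \devf(n) + o(1)$, which is $\le L_1(A)+\ep$ (resp. $\le -t$) once $n \ge n_0(A,\ep)$. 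Taking $\mesf$ to be the deviation-measure function of the chosen LDT parameter (enlarged by the multiplicative constant from averaging over residues, which is allowed since $\mesfs$ is a convex cone) completes both cases.

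The main obstacle I anticipate is the uniformity of the exceptional set in $B$: the base-LDT is a statement about a fixed observable, so one genuinely has to absorb the cocycle's local variation into the choice of observable \emph{before} invoking the LDT, rather than applying the LDT to each $B$ separately. Handling the residue block $\Bn{r}(\transl^{qN}x)$ and the overlap between blocks cleanly — so that the error is truly $o(1)$ uniformly — is the other technical point, but it is routine given the $L^\infty$-bound on the cocycles.
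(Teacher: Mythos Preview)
Your proposal has a genuine gap in the use of the compatibility condition. Definition~\ref{compatibility_condition} only guarantees that for a \emph{set} $F \in \mathscr{F}_N(A)$ one can find $\xi \in \observables$ with $\ind_F \le \xi$ and $\int \xi \le \mu(F) + \ep$; it says nothing about dominating a real-valued function like $\tfrac{1}{N}\log\norm{\Bn{N}(x)}$ (or its supremum over $B$) by an observable whose integral is close to $\Lan{N}{1}(A)$. Your $\log^+$ workaround fails too: while $\log \le \log^+$ pointwise, the integral $\int \tfrac{1}{N}\log^+\norm{\Bn{N}}\,d\mu$ can be much larger than $\Lan{N}{1}(B)$, so an observable $\xi$ dominating $\log^+$ with integral near $\int\log^+$ need not have $\int\xi \le L_1(A)+\ep$. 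In short, the compatibility hypothesis is tailored to indicator functions, and your scheme never reduces to one.

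The paper's proof is structured precisely to land on an indicator. It uses a genuine stopping time $n(x) := \min\{n \ge 1 : \tfrac{1}{n}\log\norm{\An{n}(x)} < L_1(A)+\ep\}$ (well defined a.e.\ by Kingman), fixes $N$ so that the set $\U = \{n(x) \le N\}$ has $\mu(\U\comp) < \ep$, and then excises a further small set $\V\comp$ (obtained from the $L^1$-boundedness of $A$ via Chebyshev) on which $\norm{\An{n}(x)}$ is too small to transfer the bound from $A$ to $B$. The key output is the small \emph{set} $\Oo\comp = \U\comp \cup \V\comp \in \mathscr{F}_N(A)$. A variable-length block decomposition of the orbit of $x$ (step of length $n(x_k)$ if $x_k \in \Oo$, step $1$ otherwise) then yields
\[
\tfrac{1}{n}\log\norm{\Bn{n}(x)} \le L_1(A) + 2\ep + 3C \cdot \tfrac{1}{n}\sum_{j=0}^{n-1}\ind_{\Oo\comp}(T^j x) + \tfrac{CN}{n},
\]
and only now is compatibility invoked: pick $\xi \in \observables$ with $\ind_{\Oo\comp} \le \xi$ and $\int\xi < 3\ep$, and apply the base-LDT to $\xi$. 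Note also that the passage from $A$ to $B$ is done pointwise on $\V$, not via continuity of $B \mapsto \Lan{N}{1}(B)$; the latter is not available since only $A$ (not nearby $B$) is assumed $L^1$-bounded.
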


\begin{proof} Throughout this proof, $C$ will stand for a positive, finite, large enough constant that depends only on the cocycle $A$, and which may change slightly from one estimate to another. 

If $B \in \cocycles$ is at some small distance from $A$, then it will be close enough to $A$ in the $L^\infty$ distance as well, hence we will assume that for $\mu$ a.e. $x \in X$ we have $\norm{B (x) } < C$.

Moreover, in the case {\rm (i)}, when $L_1 (A)$ is finite, since we also assume $A$ to be $L^1$ bounded, we may choose the constant $C$ such  that for all $n \ge 1$ we have 
$ \bnorm{ \frac{1}{n} \, \log \norm{\An{n} (\cdot)}}_{L^1} < C$ and hence also $\abs{ L_1 (A) } < C$.
 
\medskip

The proofs for each of the two cases are similar, but the argument will differ in some parts. We first present the case $L_1 (A) > - \infty$ in detail, then indicate how to modify the argument for the case $L_1 (A) = - \infty$.

\smallskip

{\rm (i)}  Fix $\ep > 0$. By Kingman's subadditive ergodic theorem, 
$$ \lim_{n \to \infty} \, \frac{1}{n} \log \norm{\An{n} (x)} = L_1 (A) \quad \text{ for } \mu \text{ a.e. } x\,,$$
hence the number
\begin{equation}\label{def-n(x)}
n (x) := \min \{ n \ge 1 \colon \frac{1}{n} \log \norm{\An{n} (x)} < L_1 (A) + \ep \}
\end{equation} 
is defined for $\mu$ a.e. $x \in X$. 

For every integer $N$, let 
$$\U_N := \{ x \colon n (x) \le N \} = \bigcup_{n=1}^N \{ x \colon  \frac{1}{n} \log \norm{\An{n} (x)} < L_1 (A) + \ep \}\,.
$$
Then $\U_N\comp \in \mathscr{F}_N (A)$, $\U_N \subset \U_{N+1}$ and $\cup_N \,  \U_N$ has full measure. Therefore, there is $N = N (\ep, A)$ such that $\mu (\U_N\comp) < \ep$. 

We fix this integer $N$ for the rest of the proof and denote the set $\U = \U (\ep, A) : = \U_N$. Therefore,  $\U\comp \in \mathscr{F}_N (A)$, $\mu (\U\comp) < \ep$ and we have: if $x \in \U$ then $ 1 \le n (x) \le N$ and 
\begin{equation}\label{unif-usc-eq1}
\log \norm{\An{n(x)} (x) } \le n (x) L_1 (A) + n (x) \ep\,.
\end{equation}

Next we will bound from above $\log \norm{\Bn{n} (x) }$ by $\log \norm{ \An{n} (x)} + o (1)$ for all cocycles $B$ with $\dist (B, A) < \delta$ where $\delta$ will be chosen later, for all $1 \le n \le N$ and for a large set of phases $x \in X$.

Since $A$ is $L^1$-bounded,  $\log \norm{\An{n}} \in L^1 (X, \mu)$, so $\An{n} (x) \neq 0$ for $\mu$ - a.e. $x \in X$.

Moreover, if $B \in \mathcal{C}_m$ with $\dist (B, A) < \delta$ (where $\delta \ll 1$ is chosen below), we have $\norm{B (x) - A (x)} < \delta$ and $\norm{B (x)} < C$ for $\mu$ - a.e. $x \in X$.

Then for $x$ outside a null set and for $1 \le n \le N$, we have:
\begin{align*}
& \log \norm{\Bn{n} (x) } - \log \norm{\An{n} (x) } = \log \frac{ \norm{\Bn{n} (x) } }{ \norm{\An{n} (x) } }   \\
& \qquad \leq \log [  \frac{ \norm{\Bn{n} (x) - \An{n} (x)} }{ \norm{\An{n} (x) }} + 1 ]   \le  \frac{ \norm{\Bn{n} (x) - \An{n} (x)} }{ \norm{\An{n} (x) }}    \\  
& \qquad \leq n C^{n-1} \delta \cdot \frac{1}{\norm{\An{n} (x)}} \le N C^{N-1} \delta \cdot \frac{1}{\norm{\An{n} (x)}}\;.
\end{align*} 
Hence
\begin{equation}\label{unif-usc-eq2}
\log \norm{\Bn{n} (x) } \le  \log \norm{\An{n} (x) } + \delta \, N C^{N-1} \, \frac{1}{\norm{\An{n} (x)}} \
\end{equation}
for all $x$ outside a zero measure set and for all $1 \le n \le N$.

Let $t := e^{-N^2 C /\ep}$. Consider the set
$\V := \bigcap_{n=1}^{N} \ \{ x \colon \norm{A^{(n)} (x)} > t \} $. Clearly $\V\comp \in \mathscr{F}_N (A)$, and we will show that $\V\comp$ has measure at most $\ep$.

\smallskip

If for some $1 \le n \le N$ and $x \in X$ we have $\norm{\An{n} (x)} \le t \, ( <1)$, then 
$$\abs{\frac{1}{n} \, \log \norm{A^{(n)} (x)} } > \frac{\log 1/t}{n}\,,$$
hence
$$\V\comp  \subset \bigcup_{n=1}^N \,  \{x \colon \abs{\frac{1}{n} \, \log \norm{A^{(n)} (x)} } > \frac{\log 1/t}{n} \} \,.$$

Since $A$ is $L^1$-bounded, there is $C = C(A) < \infty$ such that for all $n \ge 1$
$$\bnorm{ \frac{1}{n} \, \log \norm{\An{n} (\cdot)}}_{L^1} < C\,.$$

Then by Chebyshev's inequality,
$$\mu \ \{x \colon \abs{\frac{1}{n} \, \log \norm{A^{(n)} (x)} } > \frac{\log 1/t}{n} \} < \frac{C n}{\log 1/t} \le \frac{C N}{\log 1/t} = \frac{\ep}{N}\,.$$

Therefore,
$$\mu (\V\comp) < N \, \ep/N= \ep\,,$$ 
and if $1 \le n \le N$ then for $\mu$ a.e. $x \in \V$ we have
$$\log \norm{\Bn{n} (x) } \le  \log \norm{\An{n} (x) } + \delta \, N C^{N-1} \, e^{N^2 C /\ep} < \log \norm{\An{n} (x) } + \ep\,,$$
provided we choose  $\delta < \delta (\ep, C, N) = \delta (\ep, A)$ small enough.

Let $\Oo := \U \cap \V$. Then $\Oo\comp \in \mathscr{F}_N (A)$ and $\mu (\Oo\comp) < 2 \ep$. 
We conclude that  for $\mu$ almost every $x \in \Oo$,  we have:
\begin{equation}\label{unif-usc-star}
\log \norm{\Bn{n(x)} (x) } \le n (x) \, L_1 (A) + n(x) \, 2 \ep\,.
\end{equation} 

Let $n_0 = n_0 (\ep, A) := \frac{C N}{\ep}$.

Fix $x \in X$ and define inductively for all $k \ge 1$ the sequence of phases $x_k = x_k (x) \in X$ and the sequence of integers $n_k = n_k (x) \in \N$ as follows:

\begin{align*}
x_1 &= x & & n_1 = 
\begin{cases}
n(x_1) & \text { if } x_1 \in \Oo\\
1  & \text{ if } x_1 \notin \Oo
\end{cases} \\
x_2 &= T^{n_1} x_1 & & n_2 = 
\begin{cases}
n(x_2) & \text { if } x_2 \in \Oo\\
1  & \text{ if } x_2 \notin \Oo
\end{cases} \\
\ldots \\
x_{k+1} &= T^{n_k} x_k & & n_{k+1} = 
\begin{cases}
n(x_{k+1}) & \text { if } x_{k+1} \in \Oo\\
1  & \text{ if } x_{k+1} \notin \Oo\,.
\end{cases} 
\end{align*} 

Note that for all $k \ge 1$, $x_{k+1} = T^{n_k + \ldots + n_1}  \, x$ and $ 1 \le n_k \le N$.

\smallskip

For any $n \ge n_0 (> N \ge n_1)$, there is $p \ge 1$ such that
$$n_1 + \ldots + n_p \le n < n_1 + \ldots + n_p + n_{p+1}\,,$$
so $n = n_1 + \ldots + n_p + m$, where $ 0 \le m < n_{p+1} \le N$.

For any cocycle $B$ such that $d (B, A) < \delta$, let $b_n (x) := \log \norm{\Bn{n} (x)}$. Then clearly for $\mu$-a.e. $x \in X$ we have $b_n (x) \le n \, C$, where $C$ is a constant that depends on $A$ and 
$b_n (x)$ is a sub-additive process, meaning:
$$b_{n+m} (x) \le b_n (x) + b_m (T^n x)$$
for all $n, m \ge 1$ and for $\mu$ almost every  $x \in X$.

Using this sub-additivity and the definition of $x_k (x), n_k (x)$, we have:
\begin{equation}\label{unif-usc-subadd}
\log \norm{\Bn{n} (x)} = b_n (x) = b_{n_1 + \ldots + n_p + m} (x) \le \sum_{k=1}^{p} b_{n_k} (x_k) + b_m (x_{p+1})\,.
\end{equation} 

We estimate each term separately. Each estimate is valid for $x$ outside of a null set.

For the last term we use the trivial bound:
\begin{equation}\label{unif-usc-last}
b_m (x_{p+1}) \le m C < N C\,.
\end{equation}

For every $1 \le k \le p$ we have:

$\blob$ Either $x_k \in \Oo$, so $n_k = n (x_k)$, in which case, using \eqref{unif-usc-star} we get:
\begin{align*}
b_{n_k} (x_k) & = \log \norm{\Bn{n(x_k)} (x_k)} \le n(x_k) \, L_1 (A) + 2 \ep \, n(x_k)  \\ &= (L_1 (A) + 2 \ep)  \, n_k\,.
\end{align*}

$\blob$ Or  $x_k \notin \Oo$, so $n_k = 1$, in which case $b_{n_k} (x_k) = \log \norm{B (x_k)} \le C$.

Therefore, 
\begin{align*}
b_{n_k} (x_k) & = b_{n_k} (x_k) \, \ind_{\Oo} (x_k) + b_{n_k} (x_k) \, \ind_{\Oo\comp} (x_k) \notag\\ 
& \le (L_1 (A) + 2 \ep) \, n_k  \, \ind_{\Oo} (x_k)  + C \,  \ind_{\Oo\comp} (x_k) \\
& =  (L_1 (A) + 2 \ep) \, n_k - (L_1 (A) + 2 \ep) \, n_k  \, \ind_{\Oo\comp} (x_k)  + C \,  \ind_{\Oo\comp} (x_k) \\
& =  (L_1 (A) + 2 \ep) \, n_k  - (L_1 (A) + 2 \ep)   \, \ind_{\Oo\comp} (x_k)  + C \,  \ind_{\Oo\comp} (x_k) \,,
\end{align*}
where in the last equality we used the fact that $n_k = 1$ when $x_k \in \Oo\comp$.

We conclude:
\begin{align}
b_{n_k} (x_k) & \le (L_1 (A) + 2 \ep) \, n_k + (C - L_1 (A) - 2 \ep)  \, \ind_{\Oo\comp} (x_k)  \notag \\
& < (L_1 (A) + 2 \ep) \, n_k + 3 C \, \ind_{\Oo\comp} (x_k) \,. \label{unif-usc-first}
\end{align}

We add up \eqref{unif-usc-last} and \eqref{unif-usc-first} for all $1\le k \le p$, and then use \eqref{unif-usc-subadd} to get:
\begin{align*}
\log \norm{\Bn{n} (x)} & \le (n_1 + \ldots + n_p) \, (L_1 (A) + 2 \ep) + 3 C \, \sum_{k=1}^{p}  \ind_{\Oo\comp} (x_k)  + C N\\
& \le n \,   (L_1 (A) + 2 \ep) + 3 C \, \sum_{j=0}^{n-1}  \ind_{\Oo\comp} (T^j \, x) + C N\,.
\end{align*}

Divide both sides by $n$ to conclude that for $\mu$-a.e. $x \in X$ and for all $n \ge n_0$,
\begin{equation}\label{unif-usc-eqf1}
\frac{1}{n} \, \log \norm{\Bn{n} (x)} \le L_1 (A) + 2 \ep + 3 C \, \frac{1}{n} \sum_{j=0}^{n-1}  \ind_{\Oo\comp} (T^j \, x)  + \frac{C N}{n}\,.
\end{equation}

By the choice of $n$ we have $\frac{C N}{n} < \ep$, so all is left is to estimate the Birkhoff average above. We use the compatibility condition. Since $\Oo\comp \in \mathscr{F}_N (A)$, there is an observable $\xi = \xi (A, \ep) \in \observables$ such that $\ind_{\Oo\comp} \le \xi$ and $\int_X \xi \, d \mu < \mu (\Oo\comp) + \ep < 3 \ep$.
 Then, applying the base-LDT to $\xi$, there is $\param = \param (\xi, \ep) = \param(A, \ep) \in \params$, $\param = (\nzerobar, \devf, \mesf)$, such that for $n \ge \nzerobar$  we have $\dev_n \le \ep$ and 
 $$ \frac{1}{n} \sum_{j=0}^{n-1}  \ind_{\Oo\comp} (\transl^j \, x) \le  \frac{1}{n} \sum_{j=0}^{n-1}  \xi (\transl^j \, x) < \int_X \xi \, d \mu + \dev_n < 4 \ep\,,$$
 provided we choose $x$ outside a set of measure $\mes_n$. 
 
 This ends the proof in the case $L_1 (A) > - \infty$.
 
 
 {\rm (ii)} The case $L_1 (A) = - \infty$. Let $t$ be large enough, say $t > C + 1$. We apply again Kingman's subadditive theorem and for $\mu$ a.e. $x \in X$, define the integers
 \begin{equation}\label{def-n(x)-infty}
n (x) := \min \{ n \ge 1 \colon \frac{1}{n} \log \norm{\An{n} (x)} < - 2 t \}\,.
\end{equation} 

The sets $\U_N$ are defined as before. Fix $N = N (A, t)$, then $\U = \U (A, t) = \U_N$ so that $\mu (\U\comp) < 1/t$. Furthermore, $\U\comp \in \ \mathscr{F}_N (A)$ and
if $x \in \U$ then $ 1 \le n (x) \le N$ and 
\begin{equation}\label{unif-usc-eq1-infty}
\log \norm{\An{n(x)} (x) } \le - 2 t n (x)\,.
\end{equation}

We show that \eqref{unif-usc-eq1-infty} holds also for cocycles $B$ in a neighborhood of $A$. This is where the argument differs from the case $L_1 (A) > - \infty$.

Let $0 < \delta < \frac{e^{- 2 N t}}{t \, N \, C^{N-1}}$, and let $B \in \cocycles_m$ with $\dist (B, A) < \delta$, so $\norm{B (x) - A(x)} < \delta$ for $\mu$-a.e. $x \in X$.
Then clearly, for any $1 \le m \le N$ and for $\mu$-a.e. $x \in X$ we have:
\begin{align}\label{eq10-usc}
\norm{ \Bn{m} (x) - \An{m} (x)  } < m \, C^{m-1}  \delta \le N \, C^{N-1} \delta < \frac{e^{- 2 N t}}{t} \le \frac{e^{- 2 m t}}{t}\,.
\end{align}

For these phases $x$ and number of iterates $m$, there are two cases.

\smallskip

\textit{Case 1.}  $\norm{\An{m} (x) } $ is extremely small, i.e.
$$\norm{\An{m} (x) }  < e^{ - 2 t \, m}.$$

In this case, using \eqref{eq10-usc} we get $ \norm{\Bn{m} (x) } <  e^{ - 2 t \, m} \, (1 + 1/t)$, so
\begin{equation}\label{eq11-usc}
\log \, \norm{\Bn{m} (x) } < - 2 t \, m + 1/t\,.
\end{equation}

\textit{Case 2.}  $\norm{\An{m} (x) } $ has a lower bound:
 $$\norm{\An{m} (x) }  \ge e^{ - 2 t \, m}\,.$$

Then using again \eqref{eq10-usc}, we have
\begin{align*}
\log \,  \norm{\Bn{m} (x) }  - \log \,  \norm{\An{m} (x) } 
& \le  \frac{ \norm{\Bn{m} (x) - \An{m} (x)} }{ \norm{\An{m} (x) }} & \\
& \le \frac{e^{- 2 m t}}{t} \, e^{2 t \, m} = 1/t\,,
\end{align*}
hence
\begin{equation}\label{eq12-usc}
\log \, \norm{\Bn{m} (x) } < \log \, \norm{\An{m} (x) } + 1/t\,.
\end{equation}

If $m = n(x)$, then using \eqref{eq12-usc} in the second case and using directly \eqref{eq11-usc} in the first case, we conclude that for $\mu$-a.e. $x \in \U$ we have
\begin{equation}\label{unif-usc-star-infty}
\log \norm{\Bn{n(x)} (x) } \le - 2 t \, n (x) + 1/t \le (- 2 t + 1/t) \, n (x) \,,
\end{equation} 
which is the analogue of \eqref{unif-usc-star} when $L_1 (A) > - \infty$.

The rest of the proof then follows exactly the same pattern as when $L_1 (A) > - \infty$, the role of $L_1 (A) + 2 \ep$ being now played by $- 2 t + 1/t$, while the small set $\Oo$ is simply $\U$, since there was no extra small set excluded when deriving \eqref{unif-usc-star-infty}.
 \end{proof}

\begin{remark}\normalfont
Note that since our cocycles are in $L^\infty$, Proposition~\ref{n-unif-usc} above also implies the upper semicontinuity of the top Lyapunov exponent as a function of the cocycle. In particular, this gives continuity at cocycles $A$ with $L_1 (A) = - \infty$, and since $L_1 (A) \ge L_2 (A) \ge \ldots \ge L_m (A)$, this implies that every Lyapunov exponent is continuous at $A$.
Therefore, from now on, we may assume that $L_1 (A) > - \infty$.
\end{remark}

\begin{remark}\normalfont
If $(X, \mu, T)$ is uniquely ergodic (e.g. an ergodic  torus translation), we may choose $\observables$ to be the set of all continuous functions on $X$. Then using Urysohn's lemma, it is easy to verify that the compatibility condition between $\observables$ and $A$ holds. For uniquely ergodic systems, the convergence in Birkhoff's ergodic theorem is uniform in the phase for all continuous observables. Hence the base-LDT estimate~\eqref{def:base-LDT} holds automatically for all $\xi \in \observables$, with deviation measure function $\mesf (t) \equiv 0$. 


It follows that if $(X, \mu, T)$ is uniquely ergodic, then the statements in Proposition~\ref{n-unif-usc} above hold for a.e. $x$. If we assume that the cocycles are continuous, they hold for all phases $x$. Hence we recover the corresponding result in \cite{JitMavi}.
\end{remark} 

The main application of Proposition~\ref{n-unif-usc} is the following lemma, which we use repeatedly throughout the inductive argument. It gives us a lower bound on the gap between the first two singular values of the iterates of a cocycle, thus ensuring the gap condition in the Avalanche Principle.

Throughout this paper, if $A \in \cocycles$ is such that $L_1 (A) > L_2 (A) \ge - \infty$, then $\gapc (A)$ denotes the gap between the first two LE, i.e. $\gapc (A) := L_1 (A) - L_2 (A) > 0$ when $L_2 (A) > - \infty$, while if $L_2 (A) = - \infty$ then $\gapc (A)$ is a fixed, large enough {\em finite} constant.

\begin{lemma}\label{lemma:usc-gapn}
Let $A \in \cocycles_m$ be a cocycle for which $L_1 (A) > L_2 (A)$ and let $\ep > 0$. There are $\delta_0 = \delta_0 (A,  \ep) > 0$, $n_0 = n_0 (A, \ep) \in \N$ and $\mesf = \mesf (A, \ep) \in \mesfs$ such that for all $B \in \cocycles_m$ with $\dist (B, A) < \delta_0$ and for all $n \ge n_0$, if 
\begin{equation}\label{eq1:usc-gapn}
\abs{ \Lan{n}{1} (B) - \Lan{n}{1} (A) } < \theta,
\end{equation}
then for all phases $x$ outside a set of measure $ < \mes_n$ we have:
\begin{equation}\label{eq2:usc-gapn}
\frac{1}{n} \, \log \rgap (\Bn{n} (x) ) > \gapc (A) -2 \theta - 3 \ep\,.
\end{equation}
Moreover,
\begin{equation}\label{eq3:usc-gapn}
\Lan{n}{1} (B) - \Lan{n}{2} (B) > (\gapc (A) -2 \theta - 3 \ep) \, (1 - \mes_n)\,.
\end{equation}
\end{lemma}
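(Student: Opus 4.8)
The plan is to derive \eqref{eq2:usc-gapn} by applying the nearly uniform upper semicontinuity result (Proposition~\ref{n-unif-usc}) to the second exterior power $\wedge_2 B$, and then to deduce \eqref{eq3:usc-gapn} by integrating \eqref{eq2:usc-gapn} over the phase space. The key elementary identity is that for a matrix $g \in \Mat(m,\R)$ one has $s_1(g) s_2(g) = \norm{\wedge_2 g}$, hence
$$\log \rgap (g) = \log \frac{s_1(g)}{s_2(g)} = \log \frac{s_1(g)^2}{s_1(g) s_2(g)} = 2 \log \norm{g} - \log \norm{\wedge_2 g}\,.$$
Applying this to $g = \Bn{n}(x)$ and using the multiplicativity of exterior powers, $\wedge_2 (\Bn{n}(x)) = (\wedge_2 B)^{(n)}(x)$, we get
$$\frac{1}{n} \log \rgap (\Bn{n}(x)) = \frac{2}{n} \log \norm{\Bn{n}(x)} - \frac{1}{n} \log \norm{(\wedge_2 B)^{(n)}(x)}\,.$$
So the strategy is to bound the first term from below using the fiber-LDT-type control of $\frac{1}{n}\log\norm{\Bn{n}(x)}$ near its mean $\Lan{n}{1}(B)$, and to bound the second term from above using Proposition~\ref{n-unif-usc} applied to $\wedge_2 A$.

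First I would observe that since $\wedge_2 A \in \cocycles_{\binom{m}{2}}$ and $L_1(\wedge_2 A) = L_1(A) + L_2(A)$, we may apply Proposition~\ref{n-unif-usc} to $\wedge_2 A$ (distinguishing whether this top exponent is finite or $-\infty$, i.e. whether $L_2(A) > -\infty$ or $L_2(A) = -\infty$; in the latter case the constant $\gapc(A)$ is the chosen finite constant and we use part (ii) with $t$ suitably related to $\gapc(A)$). This produces $\delta_1, n_1, \mesf_1$ such that for $B$ with $\dist(B,A) < \delta_1$ and $n \ge n_1$,
$$\frac{1}{n} \log \norm{(\wedge_2 B)^{(n)}(x)} \le L_1(A) + L_2(A) + \ep$$
outside a set of measure $< \mesf_1(n)$; here I am using that the local Lipschitz dependence $A \mapsto \wedge_2 A$ from Definition~\ref{def:cocycle:space} lets me translate smallness of $\dist(B,A)$ into smallness of $\dist(\wedge_2 B, \wedge_2 A)$. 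Second, I would handle $\frac{1}{n}\log\norm{\Bn{n}(x)}$: since this is a Birkhoff-type average of a set in $\mathscr{F}_n(B)$, one can invoke the uniform upper semicontinuity in the reverse direction — more precisely, use that $\frac{1}{n}\log\norm{\Bn{n}(x)} \ge L_1(A) - 3\ep - \theta$ outside a small-measure set. The cleanest route is: by Proposition~\ref{n-unif-usc} applied to $A$ itself we get $\frac{1}{n}\log\norm{\Bn{n}(x)} \le L_1(A) + \ep$ off a small set; combined with the hypothesis that $\Lan{n}{1}(B)$ is within $\theta$ of $\Lan{n}{1}(A) \to L_1(A)$ and a Chebyshev/averaging argument, the set where $\frac{1}{n}\log\norm{\Bn{n}(x)}$ dips much below $\Lan{n}{1}(B)$ has controlled measure. (If a uniform fiber-LDT were available one could use it directly, but the lemma is stated only under the hypotheses of Proposition~\ref{n-unif-usc}, so I would phrase the lower bound via the upper bound plus the known value of the mean, using $L_1(A) \le \Lan{n}{1}(A) + o(1)$ and $\Lan{n}{1}(B) \ge \Lan{n}{1}(A) - \theta$.)

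Combining the two bounds on a common set — the intersection of the two exceptional complements, whose complement has measure $< \mesf_1(n) + \mesf_2(n) =: \mesf(n)$, which lies in $\mesfs$ because $\mesfs$ is a convex cone — yields, for $n \ge n_0 := \max(n_1, n_2)$ and $\dist(B,A) < \delta_0 := \min(\delta_1, \delta_2)$,
$$\frac{1}{n}\log\rgap(\Bn{n}(x)) \ge 2\bigl(L_1(A) - \theta - \textstyle\frac{3}{2}\ep\bigr) - \bigl(L_1(A) + L_2(A) + \ep\bigr) = \gapc(A) - 2\theta - 4\ep$$
off a set of measure $< \mesf(n)$; relabeling $\ep$ absorbs the discrepancy with the stated $-3\ep$. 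Finally, for \eqref{eq3:usc-gapn} I would integrate: since $\rgap \ge 1$ always, $\frac{1}{n}\log\rgap(\Bn{n}(x)) \ge 0$ everywhere, and on the good set it exceeds $\gapc(A) - 2\theta - 3\ep$; integrating over $X$ and using $\frac{1}{n}\int \log\rgap(\Bn{n}) \,d\mu = \frac{1}{n}(\int \log\norm{\Bn{n}} - \int\log\norm{(\wedge_2 B)^{(n)}}) = \Lan{n}{1}(B) - (\Lan{n}{1}+\Lan{n}{2})(B) + \Lan{n}{1}(B)$... more directly, $\frac{1}{n}\int\log\rgap(\Bn{n})\,d\mu = \Lan{n}{1}(B) - \Lan{n}{2}(B)$ up to the identification of finite-scale exponents of exterior powers, and bounding this integral below by (value on good set)$\times$(measure of good set) gives \eqref{eq3:usc-gapn}.

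The main obstacle I anticipate is the lower bound on $\frac{1}{n}\log\norm{\Bn{n}(x)}$: Proposition~\ref{n-unif-usc} only gives a one-sided (upper) bound, and there is no two-sided fiber-LDT in the hypotheses, so the lower bound must be extracted purely from (a) the upper bound applied to $A$ and to $\wedge_2 A$, and (b) the assumption \eqref{eq1:usc-gapn} pinning the mean $\Lan{n}{1}(B)$. The trick is that an $L^1$-function whose integral is $\Lan{n}{1}(B)$ and which is bounded above by roughly $L_1(A)+\ep \approx \Lan{n}{1}(B) + (\theta + 2\ep)$ cannot be far below its mean on a large set — a reverse-Markov / "mass can't all escape downward" argument — but since $\frac{1}{n}\log\norm{\Bn{n}}$ is not bounded below pointwise, one needs the uniform $L^1$-boundedness (available here since $L_1(A)>-\infty$) to control the left tail via Chebyshev, exactly as in the $\V\comp$ estimate inside the proof of Proposition~\ref{n-unif-usc}. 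Getting the constants to line up so the final deviation is $-2\theta - 3\ep$ rather than something larger is then just careful bookkeeping with $\ep$.
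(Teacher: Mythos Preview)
Your overall architecture matches the paper's: use the identity $\frac{1}{n}\log\rgap(\Bn{n}(x)) = \frac{2}{n}\log\norm{\Bn{n}(x)} - \frac{1}{n}\log\norm{(\wedge_2 B)^{(n)}(x)}$, bound the second term from above via Proposition~\ref{n-unif-usc} applied to $\wedge_2 A$ (splitting into the cases $L_2(A)>-\infty$ and $L_2(A)=-\infty$), bound the first from below, combine, and then integrate for \eqref{eq3:usc-gapn}. The integration step is also correct.

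The genuine gap is the lower bound on $\frac{1}{n}\log\norm{\Bn{n}(x)}$. You assert that ``there is no two-sided fiber-LDT in the hypotheses'', but this is a misreading: the lemma assumes $L_1(A)>L_2(A)$, and assumption~(4) of Theorem~\ref{abstract:cont_thm} (which is in force throughout) says exactly that such a cocycle satisfies a \emph{uniform} fiber-LDT. The paper's proof simply invokes it to get $\frac{1}{n}\log\norm{\Bn{n}(x)} > \Lan{n}{1}(B) - \dev_n > \Lan{n}{1}(B) - \ep$ off a set of measure $<\mes_n$, and then uses \eqref{eq1:usc-gapn} and $\Lan{n}{1}(A)\ge L_1(A)$ to reach $L_1(A)-\theta-\ep$. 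Your proposed substitute, a reverse-Markov argument from the one-sided upper bound plus the pinned mean, cannot deliver an exceptional set of measure $<\mes_n$: if $f\le M$ a.e.\ with $\int f=\bar f$, then $\mu\{f<\bar f - s\}\le (M-\bar f)/(M-\bar f+s)$, which here is of order $(\theta+\ep)/(\theta+\ep+s)$, a \emph{fixed} positive constant independent of $n$. Chebyshev on the $L^1$-bound likewise gives only polynomial tails, not $\mes_n$. Since the lemma is later fed into the Avalanche Principle by taking a union of $n\asymp n_1/n_0$ shifted copies of the exceptional set and needing $n\cdot\mes_{n_0}\ll 1$, an exceptional set of fixed positive measure is fatal. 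The fix is simply to use the uniform fiber-LDT as the paper does.
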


\begin{proof}

Fix $\ep > 0$. If $L_2 (A) = - \infty$, let $t = t (A) := -2 L_1 (A) + \gapc (A)$.

Since $L_1(A) > L_2 (A)$, the cocycle $A$ satisfies a uniform fiber-LDT with a parameter $\param = \param (A, \ep) \in \params$ and in a neighborhood around $A$ of size $\delta (A, \ep) >0$.

\smallskip

The compatibility condition holds for all cocycles in $\cocycles$, hence also for $\wedge_2 A$. 
Note that $L_1 (\wedge_2 A) = L_1 (A) + L_2 (A)$, hence $L_1 (\wedge_2 A) > - \infty$ iff $L_2 (A) > - \infty$.

The nearly uniform upper semicontinuity of the top LE (Proposition~\ref{n-unif-usc}) can then be applied to $\wedge_2 A$, and it gives parameters $\delta > 0$, $\mesf \in \mesfs$, $n_0 \in \N$ that define the range of validity of \eqref{n-unif-usc-eq} and \eqref{n-unif-usc-eq-infty} respectively. These parameters depend on $A$ and $\ep$ when $L_2 (A) > - \infty$ and only on $A$ when $L_2 (A) = - \infty$.  

Pick $\delta = \delta (A, \ep) > 0$, $n_0 = n_0 (A, \ep) \in \N$, $\mesf = \mesf (A, \ep) \in \mesfs$ such that both the uniform fiber-LDT and Proposition~\ref{n-unif-usc} apply for all cocycles $B \in \cocycles_m$ with $\dist (B, A) < \delta$, for all $n \ge n_0$ and for all $x$ outside a set of measure $ < \mes_n$. Fix such $B$, $n$, $x$.

For any matrix $g \in \Mat (m, \R)$ we have
\begin{equation}\label{eq100-usc}
\rgap (g) = \frac{s_1 (g)}{s_2 (g)} = \frac{\norm{g}^2}{\norm{\wedge_2 g}} \in [1, \infty]\,.
\end{equation}


From \eqref{eq100-usc} we get
\begin{equation}\label{eq4:usc-gapn}
\frac{1}{n} \, \log \rgap (\Bn{n} (x)) = 2 \, \frac{1}{n} \, \log \norm{\Bn{n} (x)} -  \frac{1}{n} \, \log \norm{\wedge_2 \Bn{n} (x)}\,.  
\end{equation} 

The uniform fiber-LDT gives a lower bound on the first term on the right hand side of  \eqref{eq4:usc-gapn}:
\begin{equation*}
\frac{1}{n} \, \log \norm{\Bn{n} (x)} > \Lan{n}{1} (B) - \dev_{n} > \Lan{n}{1} (B) - \ep\,. 
\end{equation*}
Moreover, from assumption \eqref{eq1:usc-gapn} we have
\begin{equation*}
\Lan{n}{1} (B) > \Lan{n}{1} (A) - \theta \ge L_1 (A) - \theta,
\end{equation*}
hence 
\begin{equation}\label{eq5:usc-gapn}
\frac{1}{n} \, \log \norm{\Bn{n} (x)} >  L_1 (A) - \theta - \ep\,. 
\end{equation}

Proposition~\ref{n-unif-usc} applied to the cocycle $\wedge_2 A$ will give an upper bound on $\frac{1}{n} \, \log \norm{\wedge_2 \Bn{n} (x)}$.

If $L_2 (A) > - \infty$, so $L_1 (\wedge_2 A) > - \infty$, from part {\rm (i)} of Proposition~\ref{n-unif-usc}  we get
\begin{equation}\label{eq6:usc-gapn}
\frac{1}{n} \, \log \norm{\wedge_2 \Bn{n} (x)} < L_1 (\wedge_2 A) + \ep =  L_1 (A) + L_2 (A) + 2 \ep\,.
\end{equation}

Combine \eqref{eq4:usc-gapn}, \eqref{eq5:usc-gapn}, \eqref{eq6:usc-gapn} to conclude that for all chosen $B$, $n$, $x$ we have:
\begin{equation*}\label{pointwise-gap:n0}
\frac{1}{n} \, \log \rgap (\Bn{n} (x)) > \gapc (A) - 2 \theta - 3 \ep\,,
\end{equation*}
which proves \eqref{eq2:usc-gapn}. Integrating in $x$ we derive  \eqref{eq3:usc-gapn}.


Now if $L_2 (A) = - \infty$, so $L_1 (\wedge_2 A) = - \infty$, use part {\rm (ii)} of Proposition~\ref{n-unif-usc}  to get
\begin{equation}\label{eq200:usc-gapn}
\frac{1}{n} \, \log \norm{\wedge_2 \Bn{n} (x)} < - t = 2 L_1 (A) - \gapc (A)\,.
\end{equation}

Combine  \eqref{eq4:usc-gapn}, \eqref{eq5:usc-gapn}, \eqref{eq200:usc-gapn} and get \eqref{eq2:usc-gapn} in this case as well.
Then \eqref{eq3:usc-gapn} follows as above.
 \end{proof}

For the rest of this paper, we are given an ergodic system $(X,\mathscr{F},\mu, \transl)$, a space of measurable cocycles $\mathcal{C}$, a set of observables $\observables$ and a set of LDT parameters $\params$ with corresponding spaces of deviation functions $\devfs$ and $\mesfs$. We assume the compatibility condition~\ref{compatibility_condition} between $\observables$ and any cocycle $A \in \cocycles$, the base-LDT for any observable $\xi \in \observables$, the uniform $L^p$-boundedness condition (put $p=2$ for simplicity of notation) on any cocycle $A \in \cocycles$ with $L_1 (A) > - \infty$ and the uniform fiber-LDT for any cocycle $A \in \cocycles$ with $L_1 (A) > L_2 (A)$. These LDT estimates hold for parameters $\param \in \params$. 

\section{Finite scale continuity}
\label{finite_scale_cont}
We show that the finite scale Lyapunov exponents have a continuous behavior if the scale is {\em fixed}. 
We are not able to prove actual continuity of these finite scale quantities, unless we make some restrictions on the space of cocycles. However, this continuous behavior at finite scale is sufficient for our purposes, as the inductive procedure described in the next section leads to the actual continuity of the limit quantities (the LE) as the scale goes to infinity.

\begin{proposition}[finite scale uniform continuity]\label{prop:finite-scale}
Let $A \in \cocycles_m$ be a cocycle for which $L_1 (A) > L_2 (A)$. There are $\delta_0 = \delta_0 (A) >0$, $\nzo = \nzo (A)$, $C_1 = C_1 (A) > 0$ and $\mesf = \mesf (A) \in \mesfs$ such that for any two cocycles $B_1, B_2 \in \cocycles_m$ with $\dist (B_i, A) \le \delta_0$ where  $ i=1, 2$, if $n \ge \nzo$ and $\dist (B_1, B_2) < e^{- C_1 \, n}$, then
\begin{equation}\label{eq:finite-scale}
\abs{ \Lan{n}{1} (B_1) - \Lan{n}{1} (B_2) } < \mes_n^{1/2}.
\end{equation}

\end{proposition}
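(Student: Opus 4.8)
The plan is to compare $\Lan{n}{1}(B_1)$ and $\Lan{n}{1}(B_2)$ by integrating the pointwise difference $\frac{1}{n}\log\norm{\Bn{n}_1(x)} - \frac{1}{n}\log\norm{\Bn{n}_2(x)}$ over $X$, and to control this difference by splitting $X$ into a ``good'' set — where $\norm{\Bn{n}_i(x)}$ is not too small, so that the perturbation $B_1 \rightsquigarrow B_2$ only perturbs the log-norm by an amount comparable to $e^{C n}\,\dist(B_1,B_2)$, which is $\lesssim 1$ once $\dist(B_1,B_2) < e^{-C_1 n}$ with $C_1$ large — and a ``bad'' set of small measure, where we fall back on the uniform $L^p$-boundedness to estimate the contribution via Hölder. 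The key inputs are: (a) the telescoping/product bound $\norm{\Bn{n}_1(x) - \Bn{n}_2(x)} \le n\,C^{n-1}\,\dist(B_1,B_2)$ for $\mu$-a.e.\ $x$ (same elementary estimate as in the proof of Proposition~\ref{n-unif-usc}), with $C$ a uniform $L^\infty$-bound on cocycles near $A$; (b) Lemma~\ref{lemma:usc-gapn}, or rather its ingredient Proposition~\ref{n-unif-usc} together with the uniform fiber-LDT for $A$, to guarantee that off a set of measure $< \mes_n$ we have $\frac{1}{n}\log\norm{\Bn{n}_i(x)} > L_1(A) - \ep$, hence $\norm{\Bn{n}_i(x)} > e^{n(L_1(A)-\ep)} \ge e^{-Cn}$; and (c) the uniform $L^p$-bound $\bnorm{\frac{1}{n}\log\norm{\Bn{n}_i(\cdot)}}_{L^p} < C$ valid for all $B_i$ with $\dist(B_i,A) < \delta_0$.

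First I would fix $\ep > 0$ (to be absorbed, or rather calibrated against $\mes_n$ at the end) and invoke the uniform fiber-LDT for $A$ together with Proposition~\ref{n-unif-usc} to obtain $\delta_0$, $n_0$ and $\mesf$ so that, for $\dist(B_i, A) < \delta_0$ and $n \ge n_0$, there is a set $G_n = G_n(B_1,B_2) \subset X$ with $\mu(G_n\comp) < \mes_n$ on which $\norm{\Bn{n}_i(x)} > e^{-Cn}$ for $i = 1,2$. On $G_n$, using (a) and the bound $\abs{\log\frac{\norm{u}}{\norm{v}}} \le \frac{\norm{u-v}}{\min(\norm{u},\norm{v})}$ (valid when $\norm{u-v} < \min(\norm u,\norm v)$), I get
$$
\abs{\,\tfrac{1}{n}\log\norm{\Bn{n}_1(x)} - \tfrac{1}{n}\log\norm{\Bn{n}_2(x)}\,} \le \tfrac{1}{n}\,\frac{n\,C^{n-1}\,\dist(B_1,B_2)}{e^{-Cn}} = C^{n-1}\,e^{Cn}\,\dist(B_1,B_2) < \mes_n^{1/2}/2
$$
provided $\dist(B_1, B_2) < e^{-C_1 n}$ with $C_1 = C_1(A)$ chosen large enough to beat both the $C^{n-1}e^{Cn}$ factor and the (at most exponentially decaying) $\mes_n^{1/2}$ — here the assumption $\log\frac{1}{\mesf(t)} \lesssim t$ is exactly what makes such a finite $C_1$ exist. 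Integrating this pointwise bound over $G_n$ contributes at most $\mes_n^{1/2}/2$ to $\abs{\Lan{n}{1}(B_1) - \Lan{n}{1}(B_2)}$.

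It remains to bound the contribution of $G_n\comp$, where $\mu(G_n\comp) < \mes_n$. By Hölder's inequality with exponents $p$ and $p' = p/(p-1)$,
$$
\int_{G_n\comp} \abs{\,\tfrac1n\log\norm{\Bn{n}_1}\,} d\mu \le \bnorm{\tfrac1n\log\norm{\Bn{n}_1}}_{L^p} \cdot \mu(G_n\comp)^{1-1/p} < C \cdot \mes_n^{1-1/p},
$$
and likewise for $B_2$; since $p > 1$ and (by hypothesis on $\mesfs$) $\mes_n \to 0$ at most exponentially, we have $\mes_n^{1-1/p} \le \mes_n^{1/2}/4$ for $n \ge \nzo$ if we simply enlarge $\nzo$ — wait, this requires $1 - 1/p \ge 1/2$, i.e.\ $p \ge 2$; for $1 < p < 2$ one instead keeps the exponent $1-1/p$ and replaces the right-hand side of \eqref{eq:finite-scale} by $\mes_n^{\,1-1/p}$ up to constants, which is the honest statement, but since the paper says ``put $p = 2$ for simplicity'' the clean bound $\mes_n^{1/2}$ is what one states. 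Collecting the good-set and bad-set contributions gives $\abs{\Lan{n}{1}(B_1) - \Lan{n}{1}(B_2)} < \mes_n^{1/2}$, as claimed, after choosing $\nzo \ge n_0$ large enough and $\delta_0$, $C_1$, $\mesf$ as above.

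The main obstacle is the interplay between the three scales: the exponential blow-up $C^{n-1}e^{Cn}$ coming from iterating the cocycle, the exponentially small floor $e^{-Cn}$ on $\norm{\Bn{n}_i(x)}$ on the good set, and the required target decay $\mes_n^{1/2}$ — one must verify that a single constant $C_1(A)$ (depending only on the $L^\infty$-bound near $A$, on $L_1(A)$, and on the rate allowed for $\mesf$) simultaneously forces the good-set error below $\mes_n^{1/2}$; this is where the standing assumption $\log\frac{1}{\mesf(t)} \lesssim t$ is essential and where care is needed so that $C_1$ does not secretly depend on the particular $B_i$ or on $\ep$. A secondary subtlety is making the good set $G_n$ genuinely independent (up to null sets) of $B_1, B_2$ — this is handled exactly as in Proposition~\ref{n-unif-usc}, whose exceptional set depends only on $A$ and $\ep$.
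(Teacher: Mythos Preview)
Your approach is correct and essentially identical to the paper's: split $X$ via the uniform fiber-LDT into a good set where the telescoping bound $\norm{\Bn{n}_1(x)-\Bn{n}_2(x)}\le n\,C^{n-1}\dist(B_1,B_2)$ controls the log-norm difference pointwise, and a bad set of measure $<\mes_n$ where Cauchy--Schwarz against the uniform $L^2$-bound yields the $\mes_n^{1/2}$ contribution. One small correction: Proposition~\ref{n-unif-usc} is not used here --- the lower bound $\norm{\Bn{n}_i(x)} > e^{-(C_0+\ep_0)n}$ on the good set comes directly from the fiber-LDT inequality $\frac{1}{n}\log\norm{\Bn{n}_i(x)} > \Lan{n}{1}(B_i)-\ep_n$ combined with the crude bound $\Lan{n}{1}(B_i)\ge -C_0$ from the uniform $L^2$-boundedness (no comparison with $L_1(A)$ is needed or available at this stage), and the paper's exceptional set $\B_n=\B_n^1\cup\B_n^2$ does depend on $B_1,B_2$, which is harmless.
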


\begin{proof}
Let $\ep_0 := \gapc (A)/10 > 0$. Since $L_1 (A) > L_2 (A)$, the uniform fiber-LDT and Lemma~\ref{lemma:usc-gapn} hold for $A, \ep_0$. Choose parameters $\param = \param (A) \in \params$, $\param = (\nzerobar, \devf, \mesf)$ and $\delta_0 = \delta_0 (A) >0$ such that $\dev_{\nzerobar} \le \ep_0$ and Lemma~\ref{lemma:usc-gapn} and the fiber-LDT hold for all cocycles $B \in \cocycles_m$ with $\dist (B, A) \le \delta_0$ and for all $n \ge \nzerobar$. 

Let $C_0 = C_0 (A) > 0$ such that for all such cocycles $B$ we have $\norm{B}_{L^\infty} \le e^{C_0}$ and for all $n \ge 1$, 
$$\abs{\Lan{n}{1} (B)} \le  \bnorm{ \frac{1}{n} \, \log \norm{\Bn{n} (x)} }_{L^2} \le C_0\,.$$

Pick  $C_1 > 2 C_0 + \ep_0$ and $\nzo \ge \nzerobar$ such that $e^{- C_1 \, \nzo} < \delta_0$. 

Let $n \ge \nzo$ and $B_i \in \cocycles_m$ with $\dist (B_i, A) \le \delta_0$ ($i = 1, 2$) be arbitrary but fixed. Assume that $\dist (B_1, B_2) < e^{ - C_1 \, n}$.

Apply the fiber-LDT to each $B_i$ and conclude that for all $x$ outside a set $\B_n^i$, with $\mu (\B_n^i)  < \mes_n$ we have:
\begin{equation}\label{eq1:finite-scale}
\frac{1}{n} \, \log \norm{\Bn{n}_i (x) } > \Lan{n}{1} (B_i) - \ep_n \ge -C_0 - \ep_0\,.
\end{equation}
Let $\B_n = \B_n^1 \cup \B_n^2$, so $\mu (\B_n) < 2 \, \mes_n$ and if $x \in \B_n\comp$ then  we have:
\begin{equation}\label{eq10:finite-scale}
\norm{\Bn{n}_1 (x)}, \ \norm{\Bn{n}_2 (x)} > e^{- (C_0 + \ep_0) \, n} \,.
\end{equation}

Moreover, for $\mu$ - a.e. $x \in \B_n\comp$ and all $0 \le j \le n-1$ we also have:
$$\norm{ B_1 (\transl^j  \, x) - B_2 (\transl^j \, x) } \le \dist (B_1, B_2) < e^{-  C_1 \, n} \,.$$  

Therefore, for $\mu$ - a.e. $x \in \B_n\comp$ we get:
\begin{align*}
\abs{ \frac{1}{n} &\log \norm{B_1^{(n)}(x)} - \frac{1}{n}\log \norm{B_2^{(n)}(x)}  }  = 
\frac{1}{n} \abs{ \log \frac{ \norm{B_1^{(n)}(x)} }{ \norm{B_2^{(n)}(x)} } } \\
& \leq \frac{1}{n} \ \frac{ \norm{B_1^{(n)}(x)  -  B_2^{(n)}(x)}  }{ \min \{  \norm{B_1^{(n)}(x)}, \norm{B_2^{(n)}(x)} \}  }\\
& \leq \frac{1}{n} \sum_{j=0}^{n-1}
e^{ (C_0 + \ep_0) \,n} \ \norm{B_2^{(n-j-1)}( T^ {j+1} x)}\,\norm{B_1(T^j x)-B_2(T^ j x)}\,\norm{ B_1^{(j)}(x)} \\
& \leq \frac{1}{n} \sum_{j=0}^{n-1}
e^{ (C_0 + \ep_0) \,n}  \, e^{C_0 \, (n - j -1)} \ e^{- C_1 \, n} \ e^{C_0 \, j}  
 \le  e^{- n \, (C_1 - 2 C_0 - \ep_0) } \,.
\end{align*}

Integrating in $x$ we conclude:
\begin{equation}\label{eq3:finite-scale}
\int_{\B_n\comp} \ \abs{ \frac{1}{n} \log \norm{B_1^{(n)}(x)} - \frac{1}{n}\log \norm{B_2^{(n)}(x)}  }  \, \mu (d x) <  e^{- n \, (C_1 - 2 C_0 - \ep_0)} \,.
\end{equation}

By Cauchy-Schwarz we have
\begin{align*}
& \int_{\B_n} \ \abs{ \frac{1}{n} \log \norm{B_1^{(n)}(x)} - \frac{1}{n}\log \norm{B_2^{(n)}(x)}  }  \, \mu (d x)  \le \\ 
& \qquad \bnorm{ \frac{1}{n} \, \log \norm{\Bn{n}_1 (x) }  }_{L^2} \cdot \mu (\B_n)^{1/2} + \bnorm{ \frac{1}{n} \, \log \norm{\Bn{n}_2 (x) }  }_{L^2} \cdot \mu (\B_n)^{1/2} \,,
\end{align*}
hence
\begin{equation}\label{eq4:finite-scale}
\int_{\B_n} \ \abs{ \frac{1}{n} \log \norm{B_1^{(n)}(x)} - \frac{1}{n}\log \norm{B_2^{(n)}(x)}  }  \, \mu (d x) \less C_0 \, \mes_n^{1/2} \,.
\end{equation}
Since $\mesf \in \mesfs$ decays at most exponentially, we may of course assume that  $e^{- n \, (C_1 - 2 C_0 - \ep_0)} < \mes_n^{1/2}$, so \eqref{eq3:finite-scale} and \eqref{eq4:finite-scale} imply
$$\abs{ \Lan{n}{1} (B_1) - \Lan{n}{1} (B_2) }\le
 \int_{X} \ \abs{ \frac{1}{n} \log \norm{B_1^{(n)}(x)} - \frac{1}{n}\log \norm{B_2^{(n)}(x)}  }  \, \mu (d x) 
 < \mes_n^{1/2} \,,$$
 which proves \eqref{eq:finite-scale}.
  \end{proof}

\section{The inductive step procedure}
\label{inductive_step}
\newcommand{\realvarep}{\epsilon_{ap}}
\newcommand{\realvarka}{\varkappa_{ap}}

In this section we derive the main technical result used to prove our continuity theorem, an inductive tool based on the avalanche principle~\ref{AP-practical}, the uniform fiber-LDT~\ref{def:unif:fiber-LDT} and the nearly uniform upper semicontinuity~\ref{n-unif-usc}. 

\medskip

All estimates involving two consecutive scales $n_0$, $n_1$ of the inductive procedure will cary errors of order at most $\frac{n_0}{n_1}$. We begin with a simple lemma which shows that we may always assume that $n_1$ is a multiple of $n_0$, otherwise an extra error term of the same order is accrued. 


\begin{lemma}\label{lemma:scales-divide}
Let $A \in \cocycles$ be an $L^1$-bounded cocycle, and let $C$ be its $L^1$-bound.  
If $n_0, n_1, n, r \in \N$ are such that $n_1 = n \cdot n_0 + r$ and $0 \le r \le n_0$, then
\begin{equation}\label{eq:scales-divide}
- 2 C \, \frac{n_0}{n_1} + \Lan{(n+1) \, n_0}{1} (A) \le  \Lan{n_1}{1} (A) \le  \Lan{n \, n_0}{1} (A)  +  2 C \, \frac{n_0}{n_1}
\end{equation}
\end{lemma}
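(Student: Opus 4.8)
The plan is to exploit the submultiplicativity (and a matching lower bound) of the matrix norm along the orbit, applied to the decomposition $A^{(n_1)}(x) = A^{(r)}(T^{n n_0} x) \cdot A^{(n n_0)}(x)$, and similarly $A^{((n+1)n_0)}(x) = A^{(n_0 - r)}(T^{n_1} x) \cdot A^{(n_1)}(x)$. Writing $b_k(x) := \log \norm{A^{(k)}(x)}$, this is a subadditive cocycle: $b_{k+l}(x) \le b_k(x) + b_l(T^k x)$, and hence $\Lan{n_1}{1}(A) = \frac1{n_1}\int b_{n_1}\,d\mu \le \frac1{n_1}\int b_{n n_0}\,d\mu + \frac1{n_1}\int b_r(T^{n n_0}\cdot)\,d\mu$. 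Since $T$ preserves $\mu$, the last integral equals $\int b_r\,d\mu$, which by~\eqref{log:norm:An} (the $L^1$, hence $L^1$-norm-via-mean, bound with $L^1$-bound $C$) is at most $C r \le C n_0$ in absolute value. Dividing appropriately, $\frac1{n_1}\int b_{n n_0}\,d\mu = \frac{n n_0}{n_1}\cdot\Lan{n n_0}{1}(A) = \Lan{n n_0}{1}(A) - \frac{r}{n_1}\Lan{n n_0}{1}(A)$, and $\abs{\Lan{n n_0}{1}(A)} \le C$ again by~\eqref{log:norm:An}, so this term differs from $\Lan{n n_0}{1}(A)$ by at most $C r/n_1 \le C n_0/n_1$. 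Collecting the two error contributions gives $\Lan{n_1}{1}(A) \le \Lan{n n_0}{1}(A) + 2C\,n_0/n_1$, the right-hand inequality.

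For the left-hand inequality I would argue symmetrically using the other decomposition: $b_{(n+1)n_0}(x) \le b_{n_0 - r}(T^{n_1}x) + b_{n_1}(x)$, so integrating and using $T$-invariance, $((n+1)n_0)\,\Lan{(n+1)n_0}{1}(A) \le \int b_{n_0-r}\,d\mu + n_1\,\Lan{n_1}{1}(A)$, i.e. $\Lan{n_1}{1}(A) \ge \frac{(n+1)n_0}{n_1}\Lan{(n+1)n_0}{1}(A) - \frac1{n_1}\int b_{n_0-r}\,d\mu$. Using $\abs{\int b_{n_0-r}\,d\mu} \le C(n_0 - r) \le C n_0$ and $\frac{(n+1)n_0}{n_1} = 1 + \frac{n_0 - r}{n_1}$ together with $\abs{\Lan{(n+1)n_0}{1}(A)} \le C$, the prefactor costs at most $C n_0/n_1$ and the remainder term at most $C n_0/n_1$, yielding $\Lan{n_1}{1}(A) \ge \Lan{(n+1)n_0}{1}(A) - 2C\,n_0/n_1$, as claimed.

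None of this is genuinely hard; the only point requiring a little care is making sure the bound~\eqref{log:norm:An} is being invoked in the form actually available here. The cocycle is only assumed $L^1$-bounded (not that $\log\norm{A^\pm}\in L^1$), so rather than~\eqref{log:norm:An} one should directly use the defining inequality $\bnorm{\frac1k\log\norm{A^{(k)}(\cdot)}}_{L^1} < C$ for all $k \ge 1$, which gives $\abs{\int \frac1k b_k\,d\mu} \le C$, hence $\abs{\int b_k\,d\mu} \le C k$ — exactly what the estimates above need, with $k \in \{r,\, n_0-r,\, n n_0,\, (n+1)n_0\}$. I would also note the trivial edge cases $r = 0$ (where $n_1 = n n_0$ and both sides are immediate) and $r = n_0$ (where one may absorb it into $n_1 = (n+1)n_0$), so that the asserted inequalities hold uniformly in $0 \le r \le n_0$.
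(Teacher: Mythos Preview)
Your proof is correct and follows essentially the same route as the paper: subadditivity of $b_k(x)=\log\norm{\An{k}(x)}$ applied to the two decompositions $n_1=n\,n_0+r$ and $(n+1)n_0=n_1+(n_0-r)$, integration, $T$-invariance of $\mu$, and the uniform bound $\abs{\Lan{k}{1}(A)}\le C$ from the $L^1$-boundedness. The only cosmetic difference is in the algebra: the paper rearranges to $\Lan{n_1}{1}(A)-\Lan{n n_0}{1}(A)\le \frac{r}{n_1}\bigl[\Lan{r}{1}(A)-\Lan{n n_0}{1}(A)\bigr]$ and bounds the bracket by $2C$, whereas you split the prefactor $\frac{n n_0}{n_1}=1-\frac{r}{n_1}$ and the remainder term separately; both yield the same $2C\,n_0/n_1$. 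Your remark that one must invoke the $L^1$-boundedness directly (rather than~\eqref{log:norm:An}) is well taken.
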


\begin{proof} From the $L^1$-boundedness assumption on $A$, for all $m \ge 1$, 
$\abs{ \Lan{m}{1} (A) } \le C$.

Since $n_1 = n \cdot n_0 + r$ and $r \ge 0$,  $\An{n_1} (x) = \An{r} (T^{n \, n_0} x) \cdot \An{n \, n_0} (x)$, hence  
$$\norm{A^{(n_1)}(x)} \leq \norm{A^{(r)}(\transl^{n\,n_0} x)} \, \norm{A^{(n\,n_0)}(x)}\,.$$
Taking logarithms, dividing by $n_1$ then integrating in $x$ we get:
$$
\Lan{n_1}{1} (A) \le \frac{n \, n_0}{n_1} \, \Lan{n \, n_0}{1} (A) + \frac{r}{n_1} \, \Lan{r}{1} (A)\,.
$$
This implies
$$
\Lan{n_1}{1} (A) -  \Lan{n \, n_0}{1} (A)  \le \frac{r}{n_1} \, [  \Lan{r}{1} (A) -  \Lan{n \, n_0}{1} (A) ] \le 2 C \, \frac{r}{n_1}\,,
$$
which proves the right hand side of \eqref{eq:scales-divide}.

Now write $(n+1) \, n_0 = n_1 + q$, where $q = n_0 - r$, so $0 \le q \le n_0$. Then
\begin{align*}
\An{(n+1) \, n_0} (x) & = \An{q} (T^{n_1} x) \cdot \An{n_1} (x)\,,\\
\norm{ \An{(n+1) \, n_0} (x)}  & \le \norm{\An{q} (T^{n_1} x) } \,  \norm{ \An{n_1} (x) }\,. 
\end{align*}
Taking logarithms, dividing by $(n+1) \, n_0$ then integrating in $x$ we get:
$$
\Lan{(n+1) \, n_0}{1} (A) \le \frac{n_1}{(n+1) \, n_0} \, \Lan{n_1}{1} (A) + \frac{q}{(n+1) \, n_0} \, \Lan{q}{1} (A)\,.
$$

This implies
$$
\Lan{(n+1) \, n_0}{1} (A)  -  \Lan{n_1}{1} (A)  \le \frac{q}{(n+1) \, n_0} \, [ \Lan{q}{1} (A) - \Lan{n_1}{1} (A) ] \le 2 C \, \frac{1}{(n+1)}\,, 
$$
which proves the left hand side of \eqref{eq:scales-divide}.
\end{proof}


\begin{lemma}\label{lemma:angle:average:pointwise}
Let $B \in \cocycles$ satisfying a fiber-LDT with parameter $\param = (\nzerobar, \devf, \mesf) \in \params$. Let $m_1, m_2, n \in \N$ and $\eta > 0$ be such that $m_i \ge n \ge \nzerobar$  for $i = 1, 2$ and
\begin{equation*}
\abs{ \Lan{m_2 + m_1}{1} (B) - \Lan{m_i}{1} (B) } < \eta
\end{equation*} 
Then
\begin{equation}\label{eq:angle:average:pointwise}
\frac{\norm{\Bn{m_2+m_1} (x)}}{\norm{\Bn{m_2} (\transl^{m_1} x)} \cdot \norm{\Bn{m_1} (x)}} > e^{- (m_1+m_2) ( \eta + 2 \dev_n)}
\end{equation}
for all $x$ outside a set of measure $ < 3 \, \mes_n$.
\end{lemma}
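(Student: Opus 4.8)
The idea is to split the denominator into factors controlled by the fiber-LDT applied at scale $n$ and the numerator controlled from below by the fiber-LDT applied at scale $m_1 + m_2$, then compare the finite-scale exponents using the hypothesis. First, since $B$ satisfies a fiber-LDT with parameter $\param = (\nzerobar, \devf, \mesf)$ and $m_1, m_2, m_1 + m_2 \ge n \ge \nzerobar$ (note $\dev$ is non-increasing, so $\dev_{m_i}, \dev_{m_1+m_2} \le \dev_n$), I would produce three exceptional sets:
\begin{itemize}
\end{itemize}
(avoiding bullets): applying \eqref{eq:fiber-LDT} at scale $m_1$ gives a set of measure $< \mes_{m_1} \le \mes_n$ outside of which $\frac{1}{m_1}\log\norm{\Bn{m_1}(x)} < \Lan{m_1}{1}(B) + \dev_{m_1} \le \Lan{m_1}{1}(B) + \dev_n$; applying it at scale $m_2$ and using that $T^{m_1}$ is measure-preserving gives a set of measure $< \mes_{m_2} \le \mes_n$ outside of which $\frac{1}{m_2}\log\norm{\Bn{m_2}(T^{m_1}x)} < \Lan{m_2}{1}(B) + \dev_n$; and applying it at scale $m_1 + m_2$ gives a set of measure $< \mes_{m_1+m_2} \le \mes_n$ outside of which $\frac{1}{m_1+m_2}\log\norm{\Bn{m_1+m_2}(x)} > \Lan{m_1+m_2}{1}(B) - \dev_n$. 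Since $\mesf$ is decreasing, the union of these three sets has measure $< 3\,\mes_n$, and for $x$ outside it all three inequalities hold simultaneously.

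Next I would assemble the estimate. Taking logarithms in \eqref{eq:angle:average:pointwise}, it suffices to show
$$\log\norm{\Bn{m_1+m_2}(x)} - \log\norm{\Bn{m_2}(T^{m_1}x)} - \log\norm{\Bn{m_1}(x)} > -(m_1+m_2)(\eta + 2\dev_n).$$
Using the three bounds from the previous paragraph, the left-hand side is
$$> (m_1+m_2)\bigl(\Lan{m_1+m_2}{1}(B) - \dev_n\bigr) - m_2\bigl(\Lan{m_2}{1}(B) + \dev_n\bigr) - m_1\bigl(\Lan{m_1}{1}(B) + \dev_n\bigr).$$
Regrouping, this equals
$$m_1\bigl(\Lan{m_1+m_2}{1}(B) - \Lan{m_1}{1}(B)\bigr) + m_2\bigl(\Lan{m_1+m_2}{1}(B) - \Lan{m_2}{1}(B)\bigr) - 2(m_1+m_2)\dev_n.$$
By the hypothesis $\abs{\Lan{m_1+m_2}{1}(B) - \Lan{m_i}{1}(B)} < \eta$ for $i=1,2$, each of the two bracketed differences is $> -\eta$, so the whole expression is $> -m_1\eta - m_2\eta - 2(m_1+m_2)\dev_n = -(m_1+m_2)(\eta + 2\dev_n)$, which is exactly what is needed. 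Exponentiating gives \eqref{eq:angle:average:pointwise}.

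The argument is essentially bookkeeping, so there is no serious obstacle; the only points requiring a little care are making sure the deviation sizes at the larger scales $m_1, m_2, m_1+m_2$ are all bounded by $\dev_n$ (which follows from $\devf$ being non-increasing and $m_i, m_1+m_2 \ge n$), that the measure bound $3\,\mes_n$ rather than $\mes_{m_1}+\mes_{m_2}+\mes_{m_1+m_2}$ is used (valid since $\mesf$ is decreasing), and that the $T^{m_1}$-shift in the middle factor does not change the measure of the exceptional set because $T$ preserves $\mu$. One should also note that the $\Lan{m_i}{1}(B)$ are finite here: if $L_1(B) = -\infty$ one could in principle have issues, but the hypothesis implicitly places us in a regime where these finite-scale quantities and their differences make sense, and in the intended application $B$ is a nearby cocycle with $L_1 > L_2$, so all the relevant quantities are finite.
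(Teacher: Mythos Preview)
Your proof is correct and follows essentially the same approach as the paper: apply the fiber-LDT at the three scales $m_1$, $m_2$, $m_1+m_2$ to bound the numerator from below and the two denominator factors from above, use the monotonicity of $\devf$ and $\mesf$ to replace all errors by $\dev_n$ and $\mes_n$, then regroup and invoke the hypothesis on the finite-scale exponents. Your explicit mention of measure-preservation for the $T^{m_1}$-shift and the monotonicity checks are exactly the ``points requiring a little care'' the paper handles implicitly.
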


\begin{proof}
Applying (one side inequality in) the fiber-LDT to the cocycle $B$ at scale $m_2+m_1$, for all $x$ outside a set of measure $ < \mes_{m_2+m_1} < \mes_n$, we have:
$$ \frac{1}{m_2+m_1} \, \log \norm{ \Bn{m_2+m_1} (x) } >  \Lan{m_2+m_1}{1} (B) - \dev_{m_2+m_1} \ge \Lan{m_2+m_1}{1} (B) - \dev_{n} \,,$$
hence
\begin{equation}\label{eq1:angle}
\norm{\Bn{m_2+m_1} (x) } >  e^{ (m_1 + m_2)\, \Lan{m_1+m_2}{1} (B)  - (m_2+m_1) \, \dev_n} \,.  
\end{equation}

Applying (the other side inequality in) the fiber-LDT to the cocycle $B$ at scales $m_2, m_1$, for all $x$ outside a set of measure $ < \mes_{m_2} + \mes_{m_1} < 2 \mes_n$,
\begin{align*}
 \frac{1}{m_1} \, \log \norm{ \Bn{m_1} (x) } & < \Lan{m_1}{1} (B) + \dev_{m_1} < \Lan{m_1}{1} (B) + \dev_{n}\,,\\
 \frac{1}{m_2} \, \log \norm{ \Bn{m_2} (T^{m_1} x) } & < \Lan{m_2}{1} (B) + \dev_{m_2} < \Lan{m_2}{1} (B) + \dev_{n}\,.
 \end{align*}
Thus
\begin{align} 
\norm{\Bn{m_1} (x) } < e^{m_1 \Lan{m_1}{1} (B) + m_1 \dev_n}   \label{eq2:angle}\,, \\
\norm{\Bn{m_2} (T^{m_1} x) } < e^{m_2 \Lan{m_2}{1} (B) + m_2 \dev_n}  \label{eq3:angle}\,. 
\end{align}

Combining \eqref{eq1:angle}, \eqref{eq2:angle} and \eqref{eq3:angle}, for $x$ outside a set of measure $ < 3 \, \mes_n$ we get:
\begin{align*}
& \frac{\norm{\Bn{m_2+m_1} (x)}}{\norm{\Bn{m_2} (\transl^{m_1} x)} \cdot \norm{\Bn{m_1} (x)}} \\
& \qquad >  e^{m_1 ( \Lan{m_2+m_1}{1} (B) - \Lan{m_1}{1} (B)  ) + m_2 ( \Lan{m_2+m_1}{1} (B) - \Lan{m_2}{1} (B) )  - 2 (m_2+m_1) \, \dev_n} \\
& \qquad >  e^{- (m_1+m_2) ( \eta + 2 \dev_n)}\,,
\end{align*}
which proves the lemma.
\end{proof}

\begin{remark}\label{remark:angle:average:pointwise}\normalfont
We note that all is needed in the proof of  \eqref{eq:angle:average:pointwise} is the availability of the fiber-LDT estimate precisely at scales $m_1$, $m_2$ and $m_1 + m_2$ and not at all scales $n \ge \nzerobar$. This is of course irrelevant here, but it will be helpful in other contexts, when the (full) fiber-LDT estimate is not available a-priori.   
\end{remark}

\begin{proposition}[inductive step procedure]\label{inductive-step}
Let $A \in \cocycles_m$ be a measurable cocycle such that $\gapc (A) := L_1 (A) - L_2 (A) > 0$. Fix $0 < \ep < \gapc (A) / 20$.

There are $C = C(A) > 0$, $\delta = \delta (A, \ep) > 0$, $\nzz = \nzz (A, \ep) \in \N$, $\mesf = \mesf (A, \ep) \in \mesfs$ such that for any $n_0 \ge \nzz$, if the inequalities
\begin{align}
\rm{(a)} \quad & \Lan{n_0}{1} (B) - \Lan{2 n_0}{1} (B) < \eta_0 \label{eqa:inductive-step}\\
\rm{(b)} \quad & \abs{\Lan{n_0}{1} (B) - \Lan{n_0}{1} (A)} < \theta_0 \label{eqb:inductive-step}
\end{align}
hold for a cocycle $B \in \cocycles_m$ with $\dist (B, A) < \delta$, and if the positive numbers $\eta_0, \theta_0$ satisfy
\begin{equation}\label{cond:eta-theta-ep}
4 \eta_0 + 2 \theta_0 < \gapc (A) - 12 \ep\,,
\end{equation}
then for any integer $n_1$ such that
\begin{equation}\label{cond:n0-n1}
n_0^{1+} \le n_1 \le n_0 \cdot \mes_{n_0}^{-1/2}\,,
\end{equation}
we have:
\begin{align}
\abs{ \   \Lan{n_1}{1} (B)  + \Lan{n_0}{1} (B) - 2 \Lan{2 n_0}{1} (B)   \ }  & < C \frac{n_0}{n_1}\,. \label{topLE:n1-n0} 
\end{align}

Furthermore,
\begin{align}
 \rm{(a\!+\!\! +)}   \quad & \Lan{n_1}{1} (B) - \Lan{2 n_1}{1} (B) < \eta_1 \label{eqa++:inductive-step}\\
 \rm{(b\!+\!\! +)}   \quad &  \abs{\Lan{n_1}{1} (B) - \Lan{n_1}{1} (A)} < \theta_1\,, \label{eqb++:inductive-step}
\end{align}
where
\begin{align}
\theta_1 & = \theta_0 + 4 \eta_0 + C \, \frac{n_0}{n_1} \,,\label{theta1}\\
\eta_1 & = C \, \frac{n_0}{n_1} \,.\label{eta1}
\end{align}
\end{proposition}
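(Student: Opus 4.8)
The plan is to run the Avalanche Principle (Proposition~\ref{AP-practical}) on the product of $n := n_1/n_0$ blocks $g_i := \Bn{n_0}(\transl^{i n_0} x)$, so that $g^{(n)} = \Bn{n_1}(x)$, and then to extract the desired finite-scale estimates by integrating in $x$. First I would apply Lemma~\ref{lemma:scales-divide} to reduce to the case $n_0 \mid n_1$, absorbing an error of order $n_0/n_1$ into the constant $C$. Next, I would verify the two hypotheses of the AP. The gap condition $\rgap(g_i) > 1/\varkappa$ follows from Lemma~\ref{lemma:usc-gapn} applied at scale $n_0$: from~\eqref{eqb:inductive-step} with $\theta = \theta_0$, that lemma gives $\frac{1}{n_0}\log\rgap(\Bn{n_0}(x)) > \gapc(A) - 2\theta_0 - 3\ep > 0$ (using~\eqref{cond:eta-theta-ep}) off a set of measure $<\mes_{n_0}$; applying this to each translate $\transl^{i n_0}x$ and using invariance of $\mu$, the gap condition holds for all $0\le i\le n-1$ off a set of measure $< n\,\mes_{n_0} \le \mes_{n_0}^{1/2}$ by~\eqref{cond:n0-n1}. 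The near-multiplicativity condition $\norm{g_i g_{i-1}}/(\norm{g_i}\norm{g_{i-1}}) > \varepsilon$ follows from Lemma~\ref{lemma:angle:average:pointwise} applied to $B$ with $m_1 = m_2 = n_0$ and $\eta = \eta_0 + 2\theta_0 + \dots$: hypothesis~\eqref{eqa:inductive-step} combined with~\eqref{eqb:inductive-step} controls $\abs{\Lan{2n_0}{1}(B) - \Lan{n_0}{1}(B)}$, and the lemma then yields the ratio bound $> e^{-2n_0(\eta_0 + 2\dev_{n_0})}$ off a set of measure $< 3\mes_{n_0}$ per pair; summing over the $n-1$ pairs costs total measure $< 3n\,\mes_{n_0} \le 3\,\mes_{n_0}^{1/2}$. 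One fixes $\varepsilon := e^{-2n_0(\eta_0+2\dev_{n_0})}$ and $\varkappa := e^{-n_0(\gapc(A) - 2\theta_0 - 3\ep)}$; condition~\eqref{cond:eta-theta-ep} together with $\dev_{n_0}\le\ep$ is exactly what guarantees $\varkappa \le c\,\varepsilon^2$ for $n_0 \ge \nzz$ large enough.

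Once the AP applies on the good set $G_x$ (of measure $> 1 - \mes_{n_0}^{1/2}$, say, after renaming constants), its conclusion reads
\begin{equation*}
\abs{ \log\norm{\Bn{n_1}(x)} + \sum_{i=1}^{n-2}\log\norm{\Bn{n_0}(\transl^{in_0}x)} - \sum_{i=1}^{n-1}\log\norm{\Bn{2n_0}(\transl^{(i-1)n_0}x)} } \less n\cdot\frac{\varkappa}{\varepsilon^2}.
\end{equation*}
The right-hand side is $n \cdot e^{-n_0(\gapc(A) - 2\theta_0 - 3\ep) + 4n_0(\eta_0 + 2\dev_{n_0})}$, which by~\eqref{cond:eta-theta-ep}, $\dev_{n_0}\le\ep$ and $n = n_1/n_0 \le \mes_{n_0}^{-1/2}$ is bounded by $C\,n_0/n_1$ after enlarging $\nzz$ — here I would note that the exponent is a strictly negative multiple of $n_0$, so the exponential decay in $n_0$ beats the polynomial factor $n$ with room to spare. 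I would then divide by $n_1 = n\,n_0$, recognize $\frac{1}{n}\sum_{i=1}^{n-2}\frac{1}{n_0}\log\norm{\Bn{n_0}(\transl^{in_0}x)}$ as essentially a Birkhoff-type average over the subsystem $\transl^{n_0}$ of the function $x \mapsto \frac{1}{n_0}\log\norm{\Bn{n_0}(x)}$ (and similarly for the $\Bn{2n_0}$ sum), integrate in $x$, use $\transl$-invariance of $\mu$ to replace each averaged term by $\Lan{n_0}{1}(B)$, resp.\ $\Lan{2n_0}{1}(B)$, and control the integral over $G_x^{\complement}$ by Cauchy--Schwarz against the $L^2$-bound $C_0$ on $\frac{1}{n_0}\log\norm{\Bn{n_0}}$ and on $\frac{1}{n_1}\log\norm{\Bn{n_1}}$, picking up $C_0\cdot\mu(G_x^{\complement})^{1/2} \less C_0\,\mes_{n_0}^{1/4}$, which is again $\less C\,n_0/n_1$ once the exponential-decay comparison is invoked. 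The boundary terms $i=n-1$ in the first sum and the count mismatch between the two sums contribute $O(1/n) = O(n_0/n_1)$. This yields~\eqref{topLE:n1-n0}.

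For the two conclusions~\eqref{eqa++:inductive-step}--\eqref{eqb++:inductive-step} I would reuse~\eqref{topLE:n1-n0}. For (b++): writing~\eqref{topLE:n1-n0} gives $\Lan{n_1}{1}(B) = 2\Lan{2n_0}{1}(B) - \Lan{n_0}{1}(B) + O(n_0/n_1)$; since $\abs{\Lan{2n_0}{1}(B) - \Lan{n_0}{1}(B)} < 2\eta_0$ (from~\eqref{eqa:inductive-step}, up to the usual factor), one gets $\abs{\Lan{n_1}{1}(B) - \Lan{n_0}{1}(B)} < 4\eta_0 + C n_0/n_1$, and combining with~\eqref{eqb:inductive-step} gives $\abs{\Lan{n_1}{1}(B) - \Lan{n_1}{1}(A)} \le \abs{\Lan{n_1}{1}(B)-\Lan{n_0}{1}(B)} + \abs{\Lan{n_0}{1}(B) - \Lan{n_0}{1}(A)} + \abs{\Lan{n_0}{1}(A) - \Lan{n_1}{1}(A)}$; the last term is again $< 4\eta_0^A + Cn_0/n_1$ type — but more cleanly, one applies the very same AP machinery to the cocycle $A$ itself (whose hypotheses at scale $n_0$ hold because $\Lan{n_0}{1}(A)\to L_1(A)$ and the gap persists), obtaining $\Lan{n_1}{1}(A) = 2\Lan{2n_0}{1}(A) - \Lan{n_0}{1}(A) + O(n_0/n_1)$, subtracting, and bounding $\abs{\Lan{2n_0}{1}(B)-\Lan{2n_0}{1}(A)}$ by $\theta_0 + 4\eta_0 + \dots$ via another application of~\eqref{topLE:n1-n0}-type reasoning at scale $2n_0$; this produces precisely $\theta_1 = \theta_0 + 4\eta_0 + Cn_0/n_1$. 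For (a++): apply~\eqref{topLE:n1-n0} once with the pair $(n_0, n_1)$ and once with $(n_0, 2n_1)$ (legitimate since $2n_1$ still satisfies~\eqref{cond:n0-n1} after mildly shrinking the exponent in $n_0^{1+}$, or one runs the AP directly at $2n_1$), and subtract: $\abs{\Lan{n_1}{1}(B) - \Lan{2n_1}{1}(B)} = \abs{(\text{the }n_1\text{ expression}) - (\text{the }2n_1\text{ expression})} < C n_0/n_1 =: \eta_1$. The main obstacle I anticipate is the bookkeeping of measures of exceptional sets across the $n\sim n_1/n_0$ blocks: one must ensure $n\,\mes_{n_0}$ stays genuinely small, which is exactly why the upper cutoff $n_1 \le n_0\,\mes_{n_0}^{-1/2}$ in~\eqref{cond:n0-n1} is imposed, and then one must carefully track how the surviving measure $\mes_{n_0}^{1/2}$ (or a power thereof) feeds into the Cauchy--Schwarz error terms without overwhelming the gain $C n_0/n_1$ — all of which works because $\mesf$ decays at least polynomially so that $\mes_{n_0}^{1/2}$ is comparable to a negative power of $n_0$, hence dominated by $n_0/n_1 \ge n_0^{-}$ only after one also uses the exponential decay $e^{-c n_0}$ coming from $\varkappa/\varepsilon^2$, which is the true source of smallness in~\eqref{topLE:n1-n0}.
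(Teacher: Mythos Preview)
Your proposal is correct and follows essentially the same route as the paper: reduce to $n_0\mid n_1$ via Lemma~\ref{lemma:scales-divide}, verify the AP hypotheses using Lemma~\ref{lemma:usc-gapn} for the gap condition and Lemma~\ref{lemma:angle:average:pointwise} for the angle condition, apply the AP to the blocks $g_i=\Bn{n_0}(\transl^{i n_0}x)$, divide by $n_1$, integrate (handling the bad set of measure $\lesssim n\,\mes_{n_0}$ via the $L^2$-bound), and then derive (a$+\!+$) by rerunning the argument at scale $2n_1$ and (b$+\!+$) by applying~\eqref{topLE:n1-n0} to both $B$ and $A$ and subtracting. Your treatment of (b$+\!+$) is a bit roundabout before you land on the clean approach (apply~\eqref{topLE:n1-n0} to $A$ as well, then use $|\Lan{n_0}{1}(B)-\Lan{n_0}{1}(A)|<\theta_0$ together with $\Lan{n_0}{1}(\cdot)-\Lan{2n_0}{1}(\cdot)<\eta_0$ for both $A$ and $B$), which is exactly what the paper does; note also that the paper bounds the AP error by $\varkappa/\varepsilon^2<\mes_{n_0}$ (using that $\mesf$ decays at most exponentially) rather than tracking the raw exponential $e^{-c n_0}$, and then uses $\mes_{n_0}\le (n_0/n_1)^2$ to reach $C\,n_0/n_1$.
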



\begin{proof}
Since $L_1(A) > L_2 (A)$, the cocycle $A$ satisfies a uniform fiber-LDT. Moreover, Lemma~\ref{lemma:usc-gapn} also applies.

Pick $\delta = \delta (A, \ep) > 0$, $\nzz = \nzz (A, \ep) \in \N$, $\mesf = \mesf (A, \ep) \in \mesfs$ such that for any $n \ge \nzz$, both the uniform fiber-LDT and Lemma~\ref{lemma:usc-gapn} apply for all cocycles $B \in \cocycles_m$ with $\dist (B, A) < \delta$, for all $n \ge \nzz$ and for all $x$ outside a set of measure $ < \mes_n$. 

Let $n_0 \ge \nzz$ and assume \eqref{eqa:inductive-step} and \eqref{eqb:inductive-step} hold for all cocycles $B \in \cocycles_m$ with $\dist (B, A) < \delta$.

From \eqref{eqa:inductive-step}, applying Lemma~\ref{lemma:angle:average:pointwise}, we have:
\begin{equation}\label{eq:angle-1}
\frac{\norm{\Bn{2 n_0} (x)}}{\norm{\Bn{n_0} (\transl^{n_0} x)} \cdot \norm{\Bn{n_0} (x)}} >  e^{- n_0 (2 \eta_0 + 4 \dev_{n_0})} \ge e^{- n_0 (2 \eta_0 + 4 \ep)}   =: \realvarep
\end{equation}
for all $x$ outside a set of measure $ < 3 \mes_n$. 


This estimate will ensure that the angles condition in the avalanche principle (Proposition~\ref{AP-practical}) holds. Moreover, due to the assumption~\eqref{eqb:inductive-step}, applying Lemma~\ref{lemma:usc-gapn}, for $x$ outside a set of measure $ <  \mes_n$, we have:
\begin{equation}\label{eq:gap-1}
 \rgap (\Bn{n_0} (x)) > e^{n_0 \, (\gapc (A) - 2 \theta_0 - 3 \ep)} =: \frac{1}{\realvarka}\,,
\end{equation}
which will ensure that the gaps condition in the avalanche principle  also holds.

Let $\B_{n_0}$ be the union of the exceptional sets in \eqref{eq:angle-1} and \eqref{eq:gap-1}. To simplify notations, replace the deviation set measure function $\mesf$ by $2 \, \mesf$, so we may assume $\mu (\B_{n_0} ) < \mes_{n_0}$ (we will tacitly do this throughout the paper).

Let $n_1$ be an integer such that $n_0^{1+} \le n_1 \le n_0 \cdot \mes_{n_0}^{-1/2}$. Since $\mesf (t)$ decreases at least like  $t^{- c}$ (as $t \to \infty$) for some $c>0$, and since $\mesf$ depends on $\ep$ and $A$, $\nzz$ might need to be chosen larger, depending on $\ep$ and $A$ so that if $n_0 \ge \nzz$ then the integer interval  $[ n_0^{1+}, n_0 \cdot \mes_{n_0}^{-1/2} ] $ is large enough. 

Moreover, due to Lemma~\ref{lemma:scales-divide} we may assume that $n_1 = n \cdot n_0$ for some $n \in \N$.  To see this, note that once  \eqref{topLE:n1-n0} is proven for scales that are multiples of $n_0$, in particular for the scales $n_1'=n\,n_0$ and $n_1''=(n+1)\,n_0$, then using~\eqref{eq:scales-divide} we derive \eqref{topLE:n1-n0} for any scale $n_1$ such that $n\,n_0 \le n_1 \le (n+1)\,n_0$. Furthermore, \eqref{eqa++:inductive-step} and \eqref{eqb++:inductive-step} will be  derived directly from \eqref{topLE:n1-n0}.

For every $ 0 \le i \le n-1$ define $$g_i = g_i (x) := \Bn{n_0} (\transl^{i \, n_0} \, x)\,.$$
Then clearly $g^{(n)} = \Bn{n_1} (x)$ and $g_i \cdot g_{i-1} =  \Bn{2 n_0} (\transl^{(i-1) n_0}  x) $ for all $1 \le i \le n-1$.

Let $\Bbar_{n_0} := \bigcup_{i=0}^{n-1} \, \transl^{- i \, n_0} \, \B_{n_0}$, so
$\mu (\Bbar_{n_0} ) < n \ \mes_{n_0}$ and if $x \notin \Bbar_{n_0}$ then 
\begin{align*}
\rgap (g_i) >  \frac{1}{\realvarka} & \  \text{ for all }  \ \  0 \le i \le n-1\,,  
\\
 \frac{\norm{g_i \cdot g_{i-1}}}{\norm{g_i} \cdot \norm{g_{i-1}}}  >  \realvarep  & \
 \text{ for all } \  \ 1 \le i \le n-1\,.  
\end{align*}

Note also that condition~\eqref{cond:eta-theta-ep} implies $\realvarka \ll \realvarep^2$.

Therefore, we can apply the avalanche principle (Proposition~\ref{AP-practical}) and obtain:
\begin{align*} 
\abs{ \ \log \norm{g^{(n)}} + \sum_{i=1}^{n-2} \log \norm{g_i} -  \sum_{i=1}^{n-1} \log \norm{g_i \cdot g_{i-1}} \ } & \less n \cdot \frac{\realvarka}{\realvarep^2} \,.
\end{align*}

Note that
$\ \displaystyle \frac{\realvarka}{\realvarep^2} = e^{-n_0 \ (\gapc (A) - 4 \eta_0 - 2 \theta_0 - 11 \ep)} < e^{- \ep \, n_0}$.

Since the deviation set measure functions $\mesf \in \mesfs$ decay at most exponentially fast, we may assume that $e^{- \ep t } \le \mesf (t)$ for $t \ge \nzz$. Hence we have $\frac{\realvarka}{\realvarep^2}  < \mes_{n_0}$.

The AP applied to our data implies that for all $x \notin \Bbar_{n_0}$, where $\mu (\Bbar_{n_0}) < n \mes_{n_0}$,
\begin{align}
&  \Bigl|  \log \norm{\Bn{n_1} (x) } + \sum_{i=1}^{n-2} \, \log  \norm{  \Bn{n_0} (\transl^{i n_0} \, x) }  
 \notag \\
  & \qquad   \qquad 
 - \sum_{i=1}^{n-1} \, \log \, \norm{\Bn{2 n_0} (\transl^{(i-1) n_0} \, x) }  \Bigr| \less n \ \mes_{n_0}\,. & \label{IST:LE-n_0} 
\end{align}
Divide both sides of \eqref{IST:LE-n_0}  by $n_1 = n \cdot n_0$ to get:
\begin{align*}
&\Bigl| \frac{1}{n_1} \, \log \norm{\Bn{n_1} (x) } + \frac{1}{n} \, \sum_{i=1}^{n-2} \, \frac{1}{n_0} \, \log \norm{\Bn{n_0} (\transl^{i n_0} \, x) }  \\ 
& \qquad - \frac{2}{n} \, \sum_{i=1}^{n-1} \, \log \, \frac{1}{2 n_0} \, \norm{\Bn{2 n_0} (\transl^{(i-1) n_0} \, x) }  \Bigr|  \less  \mes_{n_0} \,. 
\end{align*}
Integrating in $x$ we conclude:
\begin{align*}
  \abs{ \Lan{n_1}{1} (B)  +   \frac{n-2}{n} \, \Lan{n_0}{1} (B) - \frac{2 (n-1)}{n} \Lan{2 n_0}{1} (B) } & \\
  \qquad  \qquad \less \mes_{n_0}  + C(A) \, \mes_{n_0} <   C  \, \mes_{n_0} \,.&
\end{align*}
The term on the left hand side of the above inequality can be written in the form
$$\abs{ \ \Lan{n_1}{1} (B)  + \Lan{n_0}{1} (B) - 2 \Lan{2 n_0}{1} (B) - \frac{2}{n} \, [ \Lan{n_0}{1} (B) - \Lan{2 n_0}{1} (B) ]  \  }\,,$$
hence we conclude:
\begin{align}
& \abs{ \   \Lan{n_1}{1} (B)  + \Lan{n_0}{1} (B) - 2 \Lan{2 n_0}{1} (B)   \ } \notag \\
  & \qquad  \qquad 
<   C \,  \mes_{n_0}  + \frac{2}{n} [ \Lan{n_0}{1} (B) - \Lan{2 n_0}{1} (B) ]  < C \frac{n_0}{n_1} \,. \label{eq!:indstep}
\end{align} 

Clearly the same argument leading to  \eqref{IST:LE-n_0} will hold for $2 n_1$ instead of $n_1$, which via the triangle inequality proves \eqref{eqa++:inductive-step}. 

We can rewrite \eqref{eq!:indstep} in the form
\begin{equation}\label{eq*:indstep}
\abs{ \   \Lan{n_1}{1} (B)  - \Lan{n_0}{1} (B) + 2 [ \Lan{n_0}{1} (B) - \Lan{2 n_0}{1} (B) ]   \ }  < C \frac{n_0}{n_1}\,.
\end{equation} 

Using \eqref{eq*:indstep} for $B$ and $A$ we get:
\begin{align*}
&\abs{ \Lan{n_1}{1}  (B) - \Lan{n_1}{1} (A) }   \\
&  \qquad < \abs{ \   \Lan{n_1}{1} (B)  - \Lan{n_0}{1} (B) + 2 [ \Lan{n_0}{1} (B) - \Lan{2 n_0}{1} (B) ]   \ }  \\ 
&  \qquad + \abs{ \   \Lan{n_1}{1} (A)  - \Lan{n_0}{1} (A) + 2 [ \Lan{n_0}{1} (A) - \Lan{2 n_0}{1} (A) ]   \ }   \\
&  \qquad + \abs{ \Lan{n_0}{1}  (B) - \Lan{n_0}{1} (A) } \\
& \qquad + 2 \abs{\Lan{n_0}{1} (B) - \Lan{2 n_0}{1} (B) }  + 2 \abs{\Lan{n_0}{1} (A) - \Lan{2 n_0}{1} (A) }  \\
& \qquad < \theta_0 + 4 \eta_0 + C \frac{n_0}{n_1}\,.  \qquad 
\end{align*}
 \end{proof}

\section{General continuity theorem}
\label{general_cont_thm}
We are now ready to prove our abstract  continuity result for Lyapunov exponents of linear cocycles.

\begin{theorem}\label{thm:general-cont}
Let $A \in \cocycles_m$ be a measurable cocycle for which $L_1 (A) > L_2 (A)$. Then the map $\cocycles_m \ni B \mapsto L_1 (B)$ is continuous at $A$ and the map $\cocycles_m \ni B \mapsto L_1 (B) - L_2 (B)$ is lower semicontinuous at $A$.
\end{theorem}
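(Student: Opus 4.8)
The plan is to run the inductive step procedure (Proposition~\ref{inductive-step}) along a rapidly growing sequence of scales, starting from a scale where the finite-scale continuity (Proposition~\ref{prop:finite-scale}) provides the base estimates, and then pass to the limit. Fix $\ep>0$ small compared to $\gapc(A)$, say $0<\ep<\gapc(A)/100$, and obtain the corresponding constants $C=C(A)$, $\delta=\delta(A,\ep)$, $\nzz=\nzz(A,\ep)$ and $\mesf=\mesf(A,\ep)\in\mesfs$ from Proposition~\ref{inductive-step}. Shrinking $\delta$ and enlarging $n_0^\ast := \nzz$ if necessary, I would also invoke Proposition~\ref{prop:finite-scale} to get $\delta_0$, $\nzo$, $C_1$ so that for $B$ with $\dist(B,A)\le\delta_0$ and $\dist(B,A)<e^{-C_1 n_0^\ast}$ (which can be arranged by picking $B$ close enough to $A$), we have $\abs{\Lan{n}{1}(B)-\Lan{n}{1}(A)}<\mes_n^{1/2}$ for $n\in\{n_0^\ast, 2n_0^\ast\}$. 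This supplies the hypotheses \eqref{eqa:inductive-step} and \eqref{eqb:inductive-step} at the initial scale with $\eta_0,\theta_0$ as small as we like (smaller than $\ep$, say), so that \eqref{cond:eta-theta-ep} holds comfortably.

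Next I would define the scale sequence: set $n^{(0)}=n_0^\ast$ and $n^{(j+1)} = \lfloor n^{(j)} \cdot \mes_{n^{(j)}}^{-1/2}\rfloor$ (or any admissible choice with $n^{(j)1+}\le n^{(j+1)}\le n^{(j)}\mes_{n^{(j)}}^{-1/2}$), and apply Proposition~\ref{inductive-step} at each step with $n_0=n^{(j)}$, $n_1=n^{(j+1)}$. This yields the recursions \eqref{theta1}, \eqref{eta1}: $\eta_{j+1}=C n^{(j)}/n^{(j+1)}$ and $\theta_{j+1}=\theta_j+4\eta_j+C n^{(j)}/n^{(j+1)}$. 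The key quantitative point is that $n^{(j)}/n^{(j+1)} \approx \mes_{n^{(j)}}^{1/2}\to 0$, and in fact because $\phi_{\mesf}$ does not grow too fast (the $\varlimsup_{t\to\infty}\phi_{\mesf}(2t)/\phi_{\mesf}(t)<2$ assumption, plus $\mesf$ decaying at most exponentially), the sums $\sum_j n^{(j)}/n^{(j+1)}$ and $\sum_j\eta_j$ converge, and moreover they are uniformly small when $n_0^\ast$ is large. Hence $\theta_j$ stays bounded — in fact $\theta_j\le\theta_0 + \ep$ for all $j$ once $n_0^\ast$ is large enough — so condition \eqref{cond:eta-theta-ep} is preserved at every scale and the induction never breaks. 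Telescoping \eqref{topLE:n1-n0} along the sequence then shows that $\Lan{n^{(j)}}{1}(B)$ is Cauchy, hence converges; since it also converges to $L_1(B)$, we get $\abs{L_1(B)-\Lan{n_0^\ast}{1}(B)}$ bounded by a quantity that is $O(\ep)$ plus the base-scale error. Combined with $\abs{\Lan{n_0^\ast}{1}(B)-\Lan{n_0^\ast}{1}(A)}<\mes_{n_0^\ast}^{1/2}$ and the analogous bound for $A$ itself, this gives $\abs{L_1(B)-L_1(A)}=O(\ep)$, proving continuity of $L_1$ at $A$.

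For the lower semicontinuity of $B\mapsto L_1(B)-L_2(B)$, I would use \eqref{eq3:usc-gapn} of Lemma~\ref{lemma:usc-gapn}: once we know (from the above) that $\abs{\Lan{n}{1}(B)-\Lan{n}{1}(A)}<\theta$ holds at arbitrarily large scales $n$ for $B$ near $A$ — with $\theta$ as small as desired by taking $B$ close enough and $n$ along the constructed sequence — the lemma gives $\Lan{n}{1}(B)-\Lan{n}{2}(B) > (\gapc(A)-2\theta-3\ep)(1-\mes_n)$. Letting $n\to\infty$ along the sequence, the left side tends to $L_1(B)-L_2(B)$ (note $L_1-L_2 = L_1(A) - \lim \Lan{n}{1}(\wedge_2 A)$-type identity via $\Lan{n}{2}(B) = 2\Lan{n}{1}(B) - \frac1n\log\norm{\wedge_2 B^{(n)}}$ convergence; more directly, $\Lan{n}{1}-\Lan{n}{2}\to L_1-L_2$ by Furstenberg–Kesten applied to $B$ and $\wedge_2 B$), and the right side tends to $\gapc(A)-2\theta-3\ep$. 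Since $\theta,\ep$ can be made arbitrarily small, $\liminf_{B\to A}(L_1(B)-L_2(B))\ge\gapc(A)=L_1(A)-L_2(A)$, which is the claimed lower semicontinuity.

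The main obstacle I anticipate is the bookkeeping needed to verify that the error parameters $\eta_j,\theta_j$ remain controlled uniformly across all scales and all $B$ in a fixed neighborhood — specifically, showing $\sum_j n^{(j)}/n^{(j+1)} \to 0$ as $n_0^\ast\to\infty$. This is exactly where the structural hypotheses on $\mesf$ (power-to-exponential decay, and the $\phi_{\mesf}$ doubling condition) are used: they guarantee that consecutive scales grow geometrically enough that the ratio series converges, yet not so fast that the interval $[n_0^{1+}, n_0\mes_{n_0}^{-1/2}]$ becomes empty. One must also confirm that the same neighborhood $\dist(B,A)<\delta$ works at every scale — but this is automatic since Proposition~\ref{inductive-step} is stated with a fixed $\delta=\delta(A,\ep)$ independent of $n_0$, and the only $B$-dependence enters through the finite-scale base estimate, which holds once $B$ is within $e^{-C_1 n_0^\ast}$ of $A$. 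The modulus of continuity claimed in Theorem~\ref{abstract:cont_thm} would then follow by optimizing the choice of $n_0^\ast$ as a function of $\dist(B,A)$, but that refinement is deferred to Section~\ref{modulus_cont}.
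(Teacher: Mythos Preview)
Your proposal is correct and follows essentially the same inductive strategy as the paper's proof: initialize via finite-scale continuity (Proposition~\ref{prop:finite-scale}), iterate Proposition~\ref{inductive-step} along a growing sequence of scales while tracking $\theta_j,\eta_j$ to keep \eqref{cond:eta-theta-ep} alive, pass to the limit for $L_1$, and read off lower semicontinuity of the gap from Lemma~\ref{lemma:usc-gapn}. The only minor deviations are that the paper takes the slower scale growth $n_{k+1}\asymp n_k^{1+}$ (your maximal jump $n_{k+1}\asymp n_k\,\mes_{n_k}^{-1/2}$ works too, and is exactly what is used later in Section~\ref{modulus_cont}), and that the $\phi_{\mesf}$ doubling condition you invoke is not actually needed here---summability of $\sum_j n_j/n_{j+1}$ follows already from the assumed power-type lower bound on the decay of $\mesf$, with the doubling hypothesis reserved for Proposition~\ref{prop:speed-conv}.
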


\begin{proof}
Let $0 < \ep < \gapc (A)/100$ be arbitrary but fixed. 

Since $\Lan{n}{1} (A) \to L_1 (A)$ as $n \to \infty$, there is $n_{0 2} = n_{0 2} (A, \ep) \in \N$ such that for all $n \ge n_{0 2}$ we have 
\begin{equation}\label{eq1:general-cont}
\Lan{n}{1} (A) - \Lan{2 n}{1}  (A) < \ep\,.
\end{equation}

We will apply the inductive step Proposition~\ref{inductive-step} repeatedly. 
We first choose the relevant parameters (which will depend on $A$ and $\ep$) so that both the inductive step Proposition~\ref{inductive-step} and the finite scale continuity Proposition~\ref{prop:finite-scale} apply. The latter will ensure that the assumptions \eqref{eqa:inductive-step}, \eqref{eqb:inductive-step} and \eqref{cond:eta-theta-ep} of the inductive step Proposition~\ref{inductive-step} are satisfied for a large enough scale $n_0 = n_0 (A, \ep)$, so we can start running the inductive argument with that scale.

Let $\mesf \in \mesfs$ be the sum of the corresponding deviation measure functions in the inductive step Proposition~\ref{inductive-step} and the finite scale continuity Proposition~\ref{prop:finite-scale}. 

Let $\delta_0$ be less than the size of the neighborhood of $A \in \cocycles_m$ from the inductive step Proposition~\ref{inductive-step} and from the finite scale continuity Proposition~\ref{prop:finite-scale} respectively. Let $C_1$ be the constant in the finite scale continuity Proposition~\ref{prop:finite-scale} and let $C$ be the constant in  the inductive step Proposition~\ref{inductive-step}.

Finally, let the scale  $n_0$ be greater than the thresholds $\nzz$ from the inductive step Proposition~\ref{inductive-step}, $\nzo$ from the finite scale continuity Proposition~\ref{prop:finite-scale} and $n_{0 2}$ from \eqref{eq1:general-cont} above. Moreover, assume $n_0$ to be large enough so that $e^{- C_1 \, 2 n_0} < \delta_0$, \ $\mes_{n_0}^{1/2} < \ep$ and $n^{0 +} \ll \mes_n^{ - 1/2}$ for $n \ge n_0$ and $C \, n_0^{- 0 -} \less \ep$.

 Let $\delta := e^{- C_1 \, 2 n_0}$ and let $B \in \cocycles_m$  with $\dist (B, A) < \delta$.
 
 Since $\delta = e^{- C_1 \, 2 n_0} < e^{- C_1 \, n_0}$, we can apply the finite scale continuity Proposition~\ref{prop:finite-scale} (with $B_2 = B$ and $B_1 = A$) at scales $2 n_0$ and $n_0$ and get:
 \begin{align}
 \abs{ \Lan{n_0}{1}  (B) - \Lan{n_0}{0} (A) }  & < \mes_{n_0}^{1/2} =: \theta_0 < \ep\,,  \label{eq2:general-cont}\\
 \abs{ \Lan{2 n_0}{1}  (B) - \Lan{2 n_0}{0} (A) }  & < \mes_{2 n_0}^{1/2} < \mes_{n_0}^{1/2} = \theta_0\,.  \label{eq3:general-cont}\
\end{align}

Then \eqref{eq1:general-cont}, \eqref{eq2:general-cont}, \eqref{eq3:general-cont} imply
\begin{equation}\label{eq4:general-cont}
\Lan{n_0}{1} (B) - \Lan{2 n}{1} (A) < 2 \, \mes_{n_0}^{1/2} + \ep =: \eta_0 < 3 \ep\,.
\end{equation}

The inequalities \eqref{eq4:general-cont} and \eqref{eq3:general-cont} imply the assumptions \eqref{eqa:inductive-step} and \eqref{eqb:inductive-step} in the inductive step Proposition~\ref{inductive-step}, and since  
$$ \displaystyle 2 \theta_0 + 4 \eta_0 < 2 \ep + 12 \ep = 14 \ep < \gapc (A) - 12 \ep \,,$$
the condition \eqref{cond:eta-theta-ep} between parameters is also satisfied. 

We can apply the inductive step Proposition~\ref{inductive-step} and conclude that 
\begin{equation}\label{gap:n0:general-cont}
\Lan{n_0}{1} (B) - \Lan{n_0}{2} (B) > (\gapc (A) - \theta_0 - 2 \ep) \cdot (1- \mes_{n_0})\,.
\end{equation}
Then say for $n_1 \asymp n_0^{1+}$ we have:
\begin{align}
\Lan{n_1}{1} (B) - \Lan{2 n_1}{1} (B) < \eta_1\,, \label{eqa1:general-cont}\\
\abs{\Lan{n_1}{1} (B) - \Lan{n_1}{1} (A)} < \theta_1\,, \label{eqb1:general-cont}
\end{align}
where
\begin{align}
\theta_1 & = \theta_0 + 4 \eta_0 + C \, \frac{n_0}{n_1}\,, \label{theta1:general-cont}\\
\eta_1 & = C \, \frac{n_0}{n_1}\,. \label{eta1:general-cont}
\end{align}

But 
$$\displaystyle 2 \theta_1 + 4 \eta_1 = (2 \theta_0 + 8 \eta_0) + 4 C  \, \frac{n_0}{n_1} < (2 \ep + 24 \ep) + \ep < \gapc (A) - 12 \ep \,,$$ 
hence the inductive step Proposition~\ref{inductive-step} applies again and we get:
\begin{equation}\label{gap:n1:general-cont}
\Lan{n_1}{1} (B) - \Lan{n_1}{2} (B) > (\gapc (A) - \theta_1 - 2 \ep) \cdot (1- \mes_{n_1})\,.
\end{equation}
Then say for $n_2 \asymp n_1^{1+}$ we have:
\begin{align}
\Lan{n_2}{1} (B) - \Lan{2 n_2}{1} (B) < \eta_2\,, \label{eqa2:general-cont}\\
\abs{\Lan{n_2}{1} (B) - \Lan{n_2}{1} (A)} < \theta_2\,, \label{eqb2:general-cont}
\end{align}
where
\begin{align}
\theta_2 & = \theta_1 + 4 \eta_1 + C \, \frac{n_1}{n_2}\,, \label{theta2:general-cont}\\
\eta_2 & = C \, \frac{n_1}{n_2}\,. \label{eta2:general-cont}
\end{align}

Moreover, 
$$\displaystyle 2 \theta_2 + 4 \eta_2 = (2 \theta_0 + 8 \eta_0) + [ 10 C  \, \frac{n_0}{n_1}  + 6 C \, \frac{n_1}{n_2} ] < (2 \ep + 24 \ep) + \ep < \gapc (A) - 12 \ep\,.$$

It is now clear how we continue this procedure. Going  from step $k$ to step $k+1$, we choose a scale $n_{k+1} \asymp n_k^{1 +}$ and we have:
 \begin{equation}\label{gap:nk:general-cont}
\Lan{n_k}{1} (B) - \Lan{n_k}{2} (B) > (\gapc (A) - \theta_k - 2 \ep) \cdot (1- \mes_{n_k})
\end{equation}
and 
\begin{align}
\Lan{n_{k+1}}{1} (B) - \Lan{2 n_{k+1}}{1} (B) < \eta_{k+1}\,, \label{eqak:general-cont}\\
\abs{\Lan{n_{k+1}}{1} (B) - \Lan{n_{k+1}}{1} (A)} < \theta_{k+1}\,, \label{eqbk:general-cont}
\end{align}
where
\begin{equation}\label{etak:general-cont}
\eta_{k+1} = C \, \frac{n_k}{n_{k+1}} 
\end{equation}
and
\begin{align*}
\theta_{k+1} & = \theta_k + 4 \eta_k + C \, \frac{n_1}{n_2} \\
& = (\theta_0 + 4 \eta_0) + 5 C \, \sum_{i=0}^{k-1} \, \frac{n_i}{n_{i+1}} + C \, \frac{n_k}{n_{k+1}}  \\
& <   (\theta_0 + 4 \eta_0) + 5 C \, \sum_{i=0}^{\infty} \, \frac{n_i}{n_{i+1}}  \\
& < (\theta_0 + 4 \eta_0) + 10 C \, n_0^{ - 0 -} < (\ep + 12 \ep) + 10 \ep = 23 \, \ep\,.
\end{align*}
Hence
\begin{equation}\label{thetak:general-cont}
\theta_{k+1} < 23 \, \ep\,.
\end{equation}

Moreover
\begin{align*}
2 \theta_{k+1} + 8 \eta_{k+1} & = (2 \theta_0 + 8 \eta_0) + 10 C \, \sum_{i=0}^{k-1} \, \frac{n_i}{n_{i+1}} + 6 C \, \frac{n_k}{n_{k+1}} \\
& < (2 \theta_0 + 8 \eta_0) + 10 C \, \sum_{i=0}^{\infty} \, \frac{n_i}{n_{i+1}} \\
& < (2 \theta_0 + 8 \eta_0) + 20 C \, n_0^{ - 0 -} < (2 \ep + 24 \ep) + 20 \ep = 46 \ep\,,
\end{align*}
so
\ $\displaystyle  \theta_{k+1} + 8 \eta_{k+1}  < \gapc (A) - 12 \ep$,
ensuring that the inductive process runs indefinitely.

Now take the limit as $k \to \infty$ in \eqref{eqbk:general-cont}, and using \eqref{thetak:general-cont} we have
$$\abs{ L_1 (B) - L_1 (A) } \le 23 \ep\,,$$
which proves the continuity at  $A$ of the top Lyapunov exponent $L_1$.

Moreover, taking the limit as $k \to \infty$ in \eqref{gap:nk:general-cont}, and using again \eqref{thetak:general-cont} we have
$$L_1 (B) - L_2 (B) \ge \gapc (A) - 23 \ep - 2 \ep = L_1 (A) - L_2 (A) - 25 \ep\,,$$
which proves the lower semicontinuity at $A$ of the gap between the first two LE.
 \end{proof}


Note that estimate \eqref{eqbk:general-cont} in the proof of Theorem~\ref{thm:general-cont} says that if the cocycle $B$ is close enough to $A$, then
\begin{equation}\label{eq*:general-cont-remark}
\abs{ \Lan{n}{1} (B) - \Lan{n}{1} (A)  } \less \ep
\end{equation} 
holds for an increasing sequence of scales $n = n_{k+1}, \, k \ge 0$.

In fact, a slight modification of the argument shows that \eqref{eq*:general-cont-remark} holds in fact for {\em all} large enough scales $n$.

Indeed, it is enough to first ensure that the base step of the inductive procedure, i.e. that the estimates 
 \begin{align*}
 \abs{ \Lan{n_0}{1}  (B) - \Lan{n_0}{0} (A) }  & < \mes_{n_0}^{1/2} =: \theta_0 < \ep  \\
 \abs{ \Lan{2 n_0}{1}  (B) - \Lan{2 n_0}{0} (A) }  & < \mes_{2 n_0}^{1/2} < \mes_{n_0}^{1/2} = \theta_0  
\end{align*}
hold not just for a single scale $n_0$, but for a whole (finite) interval of scales $\scale_0 = [ \nzz, e^{\nzz} ] =: [ n_0^-, n_0^+ ]$, where $\nzz$ is greater than the applicability threshold 
of various estimates (e.g. unform fiber-LDT, finite scale continuity etc). 

Let $\psi (t) := t^{1+}$ and define inductively the intervals of scales $\scale_1 = [ \psi (n_0^-), \psi (n_0^+) ] =: [ n_1^-, n_1^+ ]$, \ 
$\scale_{k+1} = [ \psi (n_k^-), \psi (n_k^+) ] =: [ n_{k+1}^-, n_{k+1}^+ ]$ for all $k \ge 0$.

It follows that if $n = n_{1} \in \scale_{1}$, then $n = n_1 \asymp \psi (n_0) = n_0^{1+}$ for some $n_0 \in \scale_0$, and so \eqref{eqa1:general-cont} and \eqref{eqb1:general-cont} hold for {\em all} $n_1 \in \scale_1$.

Continuing inductively, for every $k \ge 1$, if $n = n_{k+1} \in \scale_{k+1}$, then there is $n_k \in \scale_k$ such that  $n = n_{k+1} \asymp \psi (n_k) = n_k^{1+}$ and then \eqref{eqak:general-cont} and \eqref{eqbk:general-cont} hold as well.

The intervals $\scale_0$ and $\scale_1$ overlap because
$$n_1^- = \psi (n_0^-) = \nzz^{1+} < e^{\nzz} = n_0^+ \,.$$
Then since $\psi$ is increasing and $\scale_{k+1} \asymp \psi (\scale_k)$, the intervals $\scale_k$ and $\scale_{k+1}$ will overlap for all $k \ge 0$.

Therefore, if $n \ge n_1^-$ then $n \in \scale_{k+1}$ for some $k \ge 0$ and so \eqref{eq*:general-cont-remark} holds. This means, moreover, that we may apply lemma~\ref{lemma:usc-gapn} at all such scales and conclude that for all $x$ outside a set of measure $ < \mes_n$,
$$\frac{1}{n} \, \log \rgap (\Bn{n} (x)) > \gapc (A) - 5 \ep \,.$$

We conclude that the following uniform, finite scale statement holds.

\begin{lemma}\label{gen-theorem-extension}
Given a cocycle $A \in \cocycles_m$ with $L_1 (A) > L_2 (A)$ and $0 < \ep < \gapc (A) /100$, there are $\delta = \delta (A, \ep) > 0$ and $n_0 = n_0 (A, \ep) \in \N$, such that for all $n \ge n_0$ and for all $B \in \cocycles_m$ with $\dist (B, A) < \delta$ we have:
\begin{align}
\abs{ \Lan{n}{1} (B) - \Lan{n}{1} (A) } < \ep\,,    \label{eq:gen-theorem-extension}\\
\frac{1}{n} \, \log \rgap (\Bn{n} (x)) > \gapc (A) - 5 \ep\,, \label{eq:gen-theorem-extension-gaps}
\end{align}
for all $x$ outside a set of measure $ < \mes_n$.
\end{lemma}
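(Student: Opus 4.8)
The plan is to rerun the inductive scheme from the proof of Theorem~\ref{thm:general-cont}, but to drive it over a chain of \emph{overlapping intervals} of scales rather than along a single lacunary sequence, so that every sufficiently large integer is reached. Since the conclusion only asks for the bound $\ep$ (not a fixed multiple of it), I would run the whole argument with $\ep$ replaced by $\ep/25$, so that the accumulated error $23(\ep/25)$ from that proof lands below $\ep$; plugging the resulting estimate into Lemma~\ref{lemma:usc-gapn} with error parameters of size $\asymp\ep$ will then give $\gapc(A)-2\theta-3\ep > \gapc(A)-5\ep$ for $\theta<\ep$, which is exactly \eqref{eq:gen-theorem-extension-gaps}. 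As in that proof I would fix $\mesf\in\mesfs$ to be the sum of the deviation measure functions produced by Proposition~\ref{inductive-step} and Proposition~\ref{prop:finite-scale}, let $C$ and $C_1$ be the respective constants, and choose $\nzz=\nzz(A,\ep)$ larger than every applicability threshold (the $\nzerobar$'s of the uniform fiber-LDT, $\nzo$ of Proposition~\ref{prop:finite-scale}, $n_{02}$ of \eqref{eq1:general-cont}) and large enough that $\mes_{\nzz}^{1/2}<\ep$ and $C\, n^{-0-}\less\ep$ for $n\ge\nzz$. Finally I set $\delta := e^{-2C_1 e^{\nzz}}$, which depends only on $A$ and $\ep$.

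For the base step I would work over the whole interval $\scale_0 := [\nzz, e^{\nzz}] =: [n_0^-, n_0^+]$ instead of one scale. For any $B\in\cocycles_m$ with $\dist(B,A)<\delta$ and any $n_0\in\scale_0$ we have $\dist(A,B)<\delta\le e^{-2C_1 n_0}$, so Proposition~\ref{prop:finite-scale} (with the two cocycles $A$ and $B$) applies at scales $n_0$ and $2n_0$ and, combined with \eqref{eq1:general-cont}, yields the base estimates \eqref{eq2:general-cont}--\eqref{eq4:general-cont} with $\theta_0:=\mes_{n_0}^{1/2}<\ep$ and $\eta_0:=2\mes_{n_0}^{1/2}+\ep<3\ep$, \emph{simultaneously} for every $n_0\in\scale_0$; the parameter condition \eqref{cond:eta-theta-ep} holds since $2\theta_0+4\eta_0<14\ep<\gapc(A)-12\ep$. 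With $\psi(t):=t^{1+}$ I then define inductively $\scale_{k+1} := [\psi(n_k^-),\psi(n_k^+)] =: [n_{k+1}^-, n_{k+1}^+]$. Consecutive intervals overlap: $n_1^- = \psi(\nzz) = \nzz^{1+} < e^{\nzz} = n_0^+$ once $\nzz$ is large, and this propagates because $\psi$ is increasing and $\scale_{k+1}\asymp\psi(\scale_k)$; hence $\bigcup_{k\ge0}\scale_k\supseteq[\nzz^{1+},\infty)$, and we take $n_0:=\nzz^{1+}$ in the statement.

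Next I would iterate Proposition~\ref{inductive-step} exactly as in Theorem~\ref{thm:general-cont}: any $n\in\scale_{k+1}$ satisfies $n\asymp\psi(n_k)$ for some $n_k\in\scale_k$, so feeding that $n_k$ and target scale $n$ into Proposition~\ref{inductive-step} produces \eqref{eqak:general-cont} and \eqref{eqbk:general-cont} for $n$. The accumulated error is $\theta_{k+1} = (\theta_0+4\eta_0) + 5C\sum_{i=0}^{k-1} n_i/n_{i+1} + C\, n_k/n_{k+1}$, and since $n_{i+1}\asymp n_i^{1+}$ this geometric-type sum is dominated by $C\, n_0^{-0-}\less\ep$ uniformly in $k$; hence $\theta_{k+1}<23\ep$ for all $k$, which keeps \eqref{cond:eta-theta-ep} in force (so the induction never halts) and gives $\abs{\Lan{n}{1}(B)-\Lan{n}{1}(A)}<23\ep$ for every $n\ge n_0$ — i.e.\ \eqref{eq:gen-theorem-extension} after the rescaling of $\ep$. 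Finally, with this in hand at all scales $n\ge n_0$, I invoke Lemma~\ref{lemma:usc-gapn} (shrinking $\delta$ once more if needed so that $B$ lies in the lemma's neighborhood and $n\ge n_0$ exceeds its threshold) with $\theta\asymp\ep$ at each such $n$, obtaining $\frac{1}{n}\log\rgap(\Bn{n}(x)) > \gapc(A)-2\theta-3\ep > \gapc(A)-5\ep$ for all $x$ outside a set of measure $<\mes_n$, which is \eqref{eq:gen-theorem-extension-gaps}. The main obstacle is purely bookkeeping: checking that all the applicability thresholds (uniform fiber-LDT, Lemma~\ref{lemma:usc-gapn}, Proposition~\ref{prop:finite-scale}, Proposition~\ref{inductive-step}) can be absorbed into a single $\nzz(A,\ep)$ uniformly across each interval $\scale_k$, and that $\theta_k$ stays below the threshold in \eqref{cond:eta-theta-ep} uniformly in $k$ — which is essentially the content of the discussion preceding the lemma. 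The one genuinely new point, replacing a lacunary sequence of scales by a covering chain of overlapping intervals, works precisely because $\psi(t)=t^{1+}$ expands, but not too fast.
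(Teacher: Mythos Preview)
Your proposal is correct and is essentially identical to the paper's own argument, which is laid out in the discussion immediately preceding the lemma: run the inductive step over the overlapping chain of intervals $\scale_0=[\nzz,e^{\nzz}]$, $\scale_{k+1}\asymp\psi(\scale_k)$ with $\psi(t)=t^{1+}$, use Proposition~\ref{prop:finite-scale} across all of $\scale_0$ (hence the choice $\delta=e^{-2C_1 e^{\nzz}}$), and then feed \eqref{eq:gen-theorem-extension} into Lemma~\ref{lemma:usc-gapn} to obtain \eqref{eq:gen-theorem-extension-gaps}. The explicit rescaling $\ep\mapsto\ep/25$ you introduce is the natural way to pass from the paper's $\less\ep$ to the stated strict bound $<\ep$.
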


\begin{corollary}
\label{coro:all LE cont}
For all $m \ge 1$, and for all $1\le k \le m$, the Lyapunov exponents $L_k \colon \cocycles_m \to [- \infty, \infty)$ are continuous functions.
\end{corollary}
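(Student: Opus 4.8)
The plan is to reduce the statement to Theorem~\ref{thm:general-cont} by passing to exterior powers. Write $S_k(B):=L_1(B)+\dots+L_k(B)$. Since the largest singular value of $\wedge_k g$ equals $s_1(g)\cdots s_k(g)$, one has $\Lan{n}{1}(\wedge_k B)=\frac1n\int_X\log\norm{\wedge_k\Bn{n}(x)}\,\mu(dx)=\sum_{i=1}^k\frac1n\int_X\log s_i(\Bn{n}(x))\,\mu(dx)$, and letting $n\to\infty$ gives the standard identity $L_1(\wedge_k B)=S_k(B)$. Because $\wedge_k\colon\cocycles_m\to\cocycles_{\binom mk}$ is locally Lipschitz (Definition~\ref{def:cocycle:space}), hence continuous, I will use $S_k=L_1\circ\wedge_k$ together with two facts: (i) by the remark following Proposition~\ref{n-unif-usc}, $L_1$ is upper semicontinuous on every $\cocycles_{m'}$, so $S_k$ is upper semicontinuous on $\cocycles_m$ for every $k$ (with $S_0\equiv0$); and (ii) since $L_2(\wedge_k B)=S_k(B)-L_k(B)+L_{k+1}(B)$, the hypothesis $L_1(\wedge_k A)>L_2(\wedge_k A)$ is equivalent to $L_k(A)>L_{k+1}(A)$, and at such an $A$ Theorem~\ref{thm:general-cont} applied to $\wedge_k A$ shows that $S_k$ is continuous at $A$. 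Thus continuity of every $S_k$ would give continuity of every $L_k$; but fact (ii) only grants continuity of $S_k$ at ``edge'' indices $k$ (where $L_k(A)>L_{k+1}(A)$), so the individual exponents will have to be squeezed out.

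Fix $A$ and $k$, with the convention $L_{m+1}(A):=-\infty$. If $L_1(A)=-\infty$, then $L_k(A)=-\infty$ and $L_k(B)\le L_1(B)\to L_1(A)=-\infty$ by upper semicontinuity, so I may assume $L_1(A)>-\infty$ and set $r:=\max\{j:L_j(A)>-\infty\}$. If $k>r$, then $L_k(A)=-\infty$; here $r$ is an edge index, so $S_r$ is continuous at $A$, while $S_{r+1}$ is upper semicontinuous with $S_{r+1}(A)=-\infty$, whence $L_{r+1}(B)=S_{r+1}(B)-S_r(B)\to-\infty$ and so $L_k(B)\le L_{r+1}(B)\to-\infty=L_k(A)$. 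Finally, if $k\le r$, let $k_1$ be the largest element of $\{0\}\cup\{j:L_j(A)>L_{j+1}(A)\}$ below $k$, and let $k_2$ be the smallest edge index $\ge k$ (which exists, as $r$ itself is an edge index); then $L_{k_1+1}(A)=\dots=L_{k_2}(A)=L_k(A)=:\ell$, the sums $S_{k_1}$ and $S_{k_2}$ are continuous at $A$, and $S_{k_1+1}$, $S_{k_2-1}$ are upper semicontinuous. Since the exponents are ordered, $L_{k_2}(B)\le L_k(B)\le L_{k_1+1}(B)$, hence
\[
\limsup_{B\to A}L_k(B)\le\limsup_{B\to A}\bigl(S_{k_1+1}(B)-S_{k_1}(B)\bigr)\le S_{k_1+1}(A)-S_{k_1}(A)=\ell,
\]
and symmetrically
\[
\liminf_{B\to A}L_k(B)\ge\liminf_{B\to A}\bigl(S_{k_2}(B)-S_{k_2-1}(B)\bigr)\ge S_{k_2}(A)-S_{k_2-1}(A)=\ell,
\]
so $L_k(B)\to\ell=L_k(A)$. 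This gives continuity of $L_k$ at every $A$, for all $m$ and $k$.

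The one genuinely delicate point is this last squeeze: at a cocycle whose Lyapunov spectrum has multiplicities, Theorem~\ref{thm:general-cont} yields continuity only of the partial sums sitting at the boundary of a block of equal exponents, and the value of an exponent strictly inside such a block must be pinned from above and below by the two boundary sums, using the cheap upper semicontinuity of all partial sums to close the gap. The remaining ingredients — the identity $L_1(\wedge_k B)=S_k(B)$, the translation of the spectral gap of $A$ into a spectral gap of $\wedge_k A$, and the bookkeeping for the $-\infty$ values — are routine.
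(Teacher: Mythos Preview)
Your argument is correct and shares the paper's high-level strategy: pass to exterior powers so that Theorem~\ref{thm:general-cont} yields continuity of the partial sums $S_k$ at every edge index. Where you diverge is in the combinatorial endgame. The paper extracts the individual $L_k$ by combining (a) continuity of $S_k$ at edge indices, (b) the \emph{second} conclusion of Theorem~\ref{thm:general-cont}, namely lower semicontinuity of the gap $L_k-L_{k+1}$ at each edge index, and (c) a separate assertion that the full sum $S_m(B)=\int_X\log\lvert\det B(x)\rvert\,\mu(dx)$ is continuous everywhere, and then delegates the remaining bookkeeping to Lemma~6.1 and Theorem~6.2 of \cite{DK2}. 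You instead use only (a), replace (b) and (c) by the upper semicontinuity of \emph{every} partial sum $S_j$ (correctly drawn from the remark following Proposition~\ref{n-unif-usc}), and run an explicit squeeze between the two nearest edge indices $k_1<k\le k_2$. Your route is more self-contained: it never invokes the lower-semicontinuity half of Theorem~\ref{thm:general-cont}, needs no external reference, and avoids the separate continuity-of-determinant claim. One small point worth making explicit: when the rightmost block reaches the end, so that $k_2=m$, your fact~(ii) applies Theorem~\ref{thm:general-cont} to the scalar cocycle $\wedge_m A=\det A$; this is legitimate under the paper's convention that $L_2=-\infty$ in that case (see the paragraph preceding Lemma~\ref{lemma:usc-gapn}), but it deserves a word, since this is exactly the place where the paper falls back on the $\log\lvert\det\rvert$ argument instead.
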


\begin{proof}
Let $A \in \cocycles$ be a measurable cocycle. If $L_1 (A) > L_2 (A)$, then we can conclude, from Theorem~\ref{thm:general-cont} above that $L_1$ is continuous at $A$ and that $L_1 - L_2$ is lower semicontinuous at $A$. 

If $A$ has a different gap pattern, by taking appropriate exterior powers, we can always reduce the problem to one where there is a gap between the first two Lyapunov exponents. 

For instance, if $L_1 (A) = L_2 (A) > L_3 (A) \ge \ldots L_m (A)$, consider instead the cocycle $\wedge_2 A$. Clearly $L_1 (\wedge_2 A) = L_1 (A) + L_2 (A)$ and $L_2 (\wedge_2 A) = L_1 (A) + L_3 (A)$, hence $L_1 (\wedge_2 A) - L_2 (\wedge_2 A)  = L_2 (A) - L_3 (A) > 0$. Then there is a gap between the first two Lyapunov exponents of $\wedge_2 A$. This implies, using Theorem~\ref{thm:general-cont} for $\wedge_2 A$, that the block $L_1 + L_2$ is continuous and the gap $L_2 - L_3$ is lower semi-continuous at $A$.

This argument shows that given a cocycle $A \in \cocycles$ with any gap pattern, the corresponding Lyapunov blocks are all continuous at $A$, while the corresponding gaps are lower semicontinuous at $A$.

Moreover, the general assumptions made on the space of cocycles ensure that the map 
$$\cocycles_m \ni B \mapsto L_1 (B) + \ldots + L_m (B) = \int_X \, \log \abs{\det [B (x) ]} \, \mu (d x)$$
is continuous everywhere.

It is then a simple exercise (see Lemma 6.1 and Theorem 6.2 in \cite{DK2} for its solution) to see that this is all that is needed to conclude continuity of each individual Lyapunov exponent, irrespective of any gap pattern.
 \end{proof}

\section{Modulus of continuity}
\label{modulus_cont}
The following proposition, which is also interesting in itself, will be the main ingredient in obtaining the modulus of continuity of the top Lyapunov exponent. It gives the rate of convergence of the finite scale exponents $\Lan{n}{1} (B)$ to the top Lyapunov exponent $L_1 (B)$ and it gives an estimate on the proximity of these finite scale exponents at different scales. 

These estimates are uniform in a neighborhood of a cocycle $A \in \cocycles_m$ for which $L_1 (A) > L_2 (A)$, and they depend on a deviation measure function $\mesf = \mesf (A) \in \mesfs$, which will be fixed in the beginning. Define the map $\psi (t) = \psi_{\mesf} (t) :=  t \cdot [\mesf (t)]^{-1/2}$, and let  $\phi = \phi_{\mesf}$ be its inverse. Moreover, for every integer $n \in \N$, denote $n+\!+ := \intpart{\psi (n)} = \intpart{n \, \mes_n^{-1/2}}$ and $n-\!- := \intpart{\phi (n)}$, so $(n+\!+)-- \asymp n$.

These estimates will be obtained by applying repeatedly the inductive step Proposition~\ref{inductive-step}. In order to obtain the sharpest possible estimate, when going from one scale to the next, we will make the greatest possible jump, which is why we have defined the 'next scale' $n+\!+$ above as $\intpart{n \, \mes_n^{-1/2}}$.

\begin{proposition}[uniform speed of convergence]\label{prop:speed-conv}
Let $A \in \cocycles_m$ be a measurable cocycle for which $L_1 (A) > L_2 (A)$. There are $\delta = \delta (A) > 0$, $C = C(A)$, $\nzz = \nzz (A) \in \N$, $\mesf = \mesf (A) \in \mesfs$ such that, with the above notations,  for all $n \ge \nzz$ and for all $B \in \cocycles_m$ with $\dist (B, A) < \delta$ we have:
\begin{align}
 \Lan{n}{1} (B) - L_1 (B) & < C \, \frac{\phi (n)}{n} \le  \mes_{n-\!-}^{1/2} \label{eq:speed-conv}\\
 \abs{ \   \Lan{n+\!+}{1} (B)  + \Lan{n}{1} (B) - 2 \Lan{2 n}{1} (B)   \ }  & < C \, \frac{n}{n+\!+} \le \mes_n^{1/2} \,. \label{topLE:n-n++-} 
 \end{align}

\end{proposition}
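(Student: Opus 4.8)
The plan is to iterate the inductive step Proposition~\ref{inductive-step} along the sequence of scales obtained by repeatedly applying the "next scale" map $n \mapsto n+\!+ = \intpart{n\,\mes_n^{-1/2}}$, starting from a single large base scale, and then to telescope the resulting estimates. First I would fix $\ep$ a small fraction of $\gapc(A)$ and, exactly as in the proof of Theorem~\ref{thm:general-cont}, combine the finite scale continuity Proposition~\ref{prop:finite-scale} with \eqref{eq1:general-cont} to establish the base-step inequalities \eqref{eqa:inductive-step} and \eqref{eqb:inductive-step} at some threshold scale $n_{00} = \nzz(A)$, with $\theta_0 \asymp \mes_{\nzz}^{1/2}$ and $\eta_0 \asymp \ep$, and check that the parameter condition \eqref{cond:eta-theta-ep} holds. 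I would then define the scale sequence $\nzz = m_0 < m_1 < \cdots$ by $m_{j+1} := m_j +\!+$, and run Proposition~\ref{inductive-step} with $(n_0, n_1) = (m_j, m_{j+1})$ at each step. Note that the jump $m_{j+1}\asymp m_j\,\mes_{m_j}^{-1/2}$ lies in the admissible window \eqref{cond:n0-n1} since $m_j^{1+} \le m_j\,\mes_{m_j}^{-1/2}$ by the assumption $\log\frac{1}{\mesf(t)} \gtrsim \log t$, and the upper endpoint is attained with equality. The accumulated $\theta_j$ stay bounded by a small multiple of $\ep$ (as in the proof of Theorem~\ref{thm:general-cont}, using $\sum_j m_j/m_{j+1} = \sum_j \mes_{m_j}^{1/2} < \infty$ because $\mesf$ decays at least polynomially and the $m_j$ grow at least geometrically), so the process runs indefinitely, and the $\eta_{j+1} = C\,m_j/m_{j+1} = C\,\mes_{m_j}^{1/2}$ are summable.

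For \eqref{eq:speed-conv}, the key observation is that \eqref{topLE:n1-n0} at each step reads
\begin{equation*}
\abs{\Lan{m_{j+1}}{1}(B) + \Lan{m_j}{1}(B) - 2\Lan{2m_j}{1}(B)} < C\,\frac{m_j}{m_{j+1}}\,,
\end{equation*}
and I would first replace $\Lan{2m_j}{1}(B)$ by $\Lan{m_j}{1}(B)$ using \eqref{eqa:inductive-step}/\eqref{eqa++:inductive-step}, paying an extra $2\eta_j$, to obtain $\abs{\Lan{m_{j+1}}{1}(B) - \Lan{m_j}{1}(B)} \lesssim \eta_j + C\,m_j/m_{j+1} \asymp C\,\mes_{m_j}^{1/2}$. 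Telescoping from $m_j$ up to $\infty$ and using $\Lan{m}{1}(B)\to L_1(B)$ gives
\begin{equation*}
\abs{\Lan{m_j}{1}(B) - L_1(B)} \lesssim C\sum_{i\ge j}\frac{m_i}{m_{i+1}} \asymp C\,\frac{m_j}{m_{j+1}} = C\,\frac{m_j}{\intpart{\psi(m_j)}}\,,
\end{equation*}
where the last comparison uses the slow-growth hypothesis $\varlimsup \phi(2t)/\phi(t) < 2$ to sum the (geometric-like) tail. This proves the bound at the scales $m_j$; to get it for an arbitrary scale $n$ I would, exactly as in the paragraph following the proof of Theorem~\ref{thm:general-cont}, thicken the single base scale $\nzz$ into an interval $[\nzz, e^{\nzz}]$ of base scales, push it forward by the map $t\mapsto \intpart{\psi(t)}$, and observe that consecutive image intervals overlap (since $\psi(\nzz) = \nzz\,\mes_{\nzz}^{-1/2} < e^{\nzz}$ for $\nzz$ large), so every large $n$ is of the form $m_{j+1}$ for some admissible base scale. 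Writing $n = m_{j+1} \asymp \psi(m_j)$ gives $m_j \asymp \phi(n)$, hence $m_j/m_{j+1} \asymp \phi(n)/n$, which is the claimed rate; the second inequality $C\,\phi(n)/n \le \mes_{n-\!-}^{1/2}$ follows from $n-\!- = \intpart{\phi(n)} \asymp m_j$ and $\psi(m_j)\asymp n$, i.e. $m_j/n \asymp \mes_{m_j}^{1/2}$, absorbing the constant $C$ by enlarging $\nzz$.

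The estimate \eqref{topLE:n-n++-} is then immediate: it is precisely inequality \eqref{topLE:n1-n0} of Proposition~\ref{inductive-step} applied with $n_0 = n$ and $n_1 = n+\!+ = \intpart{n\,\mes_n^{-1/2}}$, which is the top of the admissible window \eqref{cond:n0-n1}, so the right-hand side $C\,n_0/n_1 = C\,n/(n+\!+) \asymp C\,\mes_n^{1/2}$, and again absorbing $C$ by taking $\nzz$ large gives $\le \mes_n^{1/2}$. One has to double-check that the hypotheses \eqref{eqa:inductive-step}, \eqref{eqb:inductive-step}, \eqref{cond:eta-theta-ep} of Proposition~\ref{inductive-step} are available at every such scale $n\ge\nzz$ — but this is exactly what the interval-of-scales bootstrap in the previous paragraph (equivalently, Lemma~\ref{gen-theorem-extension}) provides, with $\theta_0, \eta_0 \asymp \ep$. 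The main obstacle, and the point requiring the most care, is the bookkeeping of the two comparisons $m_j/m_{j+1}\asymp \phi(n)/n$ and the summability of the telescoping tail: both rest on the hypothesis $\varlimsup_{t\to\infty}\phi(2t)/\phi(t) < 2$, which must be used to guarantee that $\psi$ (equivalently $\phi$) is "doubling-like" so that the scales $m_j$ grow geometrically enough for $\sum_i m_i/m_{i+1}$ to be controlled by its first term, while not growing so fast that the admissible window \eqref{cond:n0-n1} becomes empty; everything else is a routine repetition of the arguments in Sections~\ref{inductive_step} and~\ref{general_cont_thm}.
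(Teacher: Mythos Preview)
Your proposal is correct and follows the same overall strategy as the paper: fix $\ep$ as a small fraction of $\gapc(A)$, run the interval-of-scales bootstrap (as in Lemma~\ref{gen-theorem-extension}) so that the hypotheses of Proposition~\ref{inductive-step} are available at every large scale with the maximal jump $n_1 = n+\!+$, and read off \eqref{topLE:n-n++-} directly from \eqref{topLE:n1-n0}.

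The one place you diverge is in deriving \eqref{eq:speed-conv}. You telescope $\Lan{m_j}{1}(B)-L_1(B)$ along the $+\!+$ sequence $m_{j+1}=m_j+\!+$, combining \eqref{topLE:n1-n0} with the $\eta_j$-bound \eqref{eqa++:inductive-step} at each step. The paper instead first isolates the intermediate inequality $\Lan{n}{1}(B)-\Lan{2n}{1}(B) < C\,\phi(n)/n$ (which is nothing but \eqref{eqa++:inductive-step} at scale $n$, after the bootstrap identifies $n_k/n_{k+1}\asymp\phi(n)/n$) and then telescopes along the \emph{dyadic} sequence $2^k n$, summing via $\phi(2^k n)/(2^k n)\le r^k\,\phi(n)/n$ from the slow-growth hypothesis. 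The paper's route is a bit cleaner: it turns the hypothesis $\varlimsup\phi(2t)/\phi(t)<2$ into a one-line geometric series and avoids mixing \eqref{topLE:n1-n0} with the $\eta$-bounds. Your route works too, though note a small indexing slip: the $\eta_j$ contribution in your bound on $\abs{\Lan{m_{j+1}}{1}(B)-\Lan{m_j}{1}(B)}$ is $\asymp m_{j-1}/m_j$, which dominates $m_j/m_{j+1}$, so the tail sum is $\asymp m_{j-1}/m_j$ rather than $m_j/m_{j+1}$; this is harmless since both equal $\phi(\cdot)/(\cdot)$ at consecutive scales and the final rate is the same.
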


\begin{proof}
To prove \eqref{eq:speed-conv}  it is enough to show, under similar constraints on $B$ and $n$, and for some function $\mesf = \mesf (A) \in \mesfs$, that
\begin{align}
\Lan{n}{1} (B) - \Lan{2 n}{1} (B) & < C \, \frac{\phi (n)}{n} \,. \label{eq0:speed-conv} 
\end{align}

This would imply, for all $k \ge 0$,
\begin{align}
\Lan{2^k \, n}{1} (B) - \Lan{2^{k+1} \,  n}{1} (B) & < \frac{\phi (2^k \, n)}{2^k n} \,. \label{eq1:speed-conv} 
\end{align}

Since we assume that for all deviation measure functions $\mesf \in \mesfs$, the corresponding map  $\phi = \phi_{\mesf}$ satisfies 
$$\varlimsup_{t \to \infty} \ \frac{\phi_{\mesf} (2t)}{\phi_{\mesf} (t)}  < 2 \,,$$
there is $ 0 < r < 1$ such that for large enough $n$ we have
$\phi (2 n) \le 2 r \, \phi (n)$.
Hence for all $k$ we have:
$$\frac{\phi (2^k \, n)}{2^k n} \le r^k \, \frac{\phi (n)}{n}\,.$$
We can then sum up for $k$ from $0$ to $\infty$ in \eqref{eq1:speed-conv} and derive \eqref{eq:speed-conv}.

\smallskip

To prove \eqref{eq0:speed-conv}, \eqref{topLE:n-n++-}  we follow the same procedure, based on the inductive step Proposition~\ref{inductive-step}, used in the proof the general continuity Theorem~\ref{thm:general-cont} and of its extension Lemma~\ref{gen-theorem-extension}, but with some modifications. We will work again with intervals of scales instead of individual scales. We fix $\ep := \gapc (A)/100$ so all subsequent parameters, including the deviation measure function $\mesf$, will be fixed and dependent only upon the cocycle $A$. 
Let $\mesf \in \mesfs$, $\delta_0 > 0$, $C_1 > 0$ and $C > 0$ be as in the beginning of the proof of Theorem~\ref{thm:general-cont}.

Pick $n_{0}^{-} \in \N$ large enough that for $n \ge n_{0}^{-}$ the inductive step Proposition~\ref{inductive-step} and the finite scale continuity Proposition~\ref{prop:finite-scale} apply, and that $\Lan{n}{1} (A) - \Lan{2 n}{1} (A) < \ep_0$. 

Assume also $n_{0}^{-}$ to be large enough that $e^{- C_1 \, 2 \, e^{n_{0}^{-}} } < \delta_0$, $ \mes_{n_{0}^{-}}^{1/2} < \ep_0$, $ C (n_{0}^{-})^{- 0 -} < \ep_0$, $ n^{0 +} \ll \mes_n^{-1/2}$ for $n \ge n_{0}^{-}$ and since $\mesf$ decays at most exponentially, we may also assume that $n_{0}^{-} \, \mes_{n_{0}^{-}}^{-1/2} < e^{n_{0}^{-}}$.

Now set $n_{0}^{+} := \intpart{e^{n_{0}^{-}}}, \ \scale_0 := [ n_{0}^{-}, \, n_{0}^{+} ]$ and $\delta := e^{- C_1 \, 2 n_{0}^{+}}$.

The assumptions above ensure that for all cocycles $B \in \cocycles_m$ with $\dist (B, A) < \delta$, and for all $n_0 \in \scale_0$, since $\delta =  e^{- C_1 \, 2 n_{0}^{+}} \le e^{- C_1 \, 2 n_0}  < e^{- C_1 \, n_0}$, the finite scale continuity Proposition~\ref{prop:finite-scale} applies at scales $2 n_0, \, n_0$. This implies, as in the proof of Theorem~\ref{thm:general-cont}, the assumptions in the inductive step Proposition~\ref{inductive-step} for every $n_0 \in \scale_0$.

Let $n_{1}^{-} := \intpart{\psi (n_{0}^{-})}$, \ $n_{1}^{+} := \intpart{\psi (n_{0}^{+})}$ and $\scale_1 := [ n_{1}^{-}, \, n_{1}^{+} ]  \asymp \psi (\scale_0)$.

We may assume that for every $n_1 \in \scale_1$ there is $n_0 \in \scale_0$ such that $n_1 = n_0 \, \intpart{\mes_{n_0}^{-1/2}} \asymp n_0 \, \mes_{n_0}^{-1/2} \, ( = \psi (n_0))$ (this is because by Lemma~\ref{lemma:scales-divide}, the estimates involving scales $n_1 \in \scale_1$ which are not divisible by $n_0$ will only cary  an additional error of order $\frac{n_0}{n_1}$).

We apply the inductive step Proposition~\ref{inductive-step} and obtain:
\begin{align*}
\Lan{n_1}{1} (B) - \Lan{2 n_1}{1} (B) & < C \, \frac{n_0}{n_1} \asymp C \frac{\phi (n_1)}{n_1} \,, \\
\abs{ \   \Lan{n_1}{1} (B)  + \Lan{n_0}{1} (B) - 2 \Lan{2 n_0}{1} (B)   \ }  & < C \, \frac{n_0}{n_1} \asymp C \mes_{n_0}^{1/2} \,.
\end{align*}
(Since $n_1 \asymp \psi (n_0)$, we have $\phi (n_1) \asymp n_0$, as $\phi$ is the inverse of $\psi$.)

The procedure continues in the same way, with intervals of scales defined inductively by $\scale_{k+1} = [ n_{k}^{-}, \, n_{k}^{+} ]  \asymp \psi (\scale_k)$ for all $k \ge 0$.
Again, each two consecutive intervals of scales $\scale_k$ and $\scale_{k+1}$ overlap.
Therefore, if $n \ge n_1^{-}$, then $n = n_{k+1} \in \scale_{k+1}$ for some $k \ge 0$, so there is $n_k \in \scale_k$ such that  $n_{k+1} = n_k \, \intpart{\mes_{n_k}^{-1/2}} \asymp n_k ++$. We then have:
\begin{align*}
\Lan{n_{k+1}}{1} (B) - \Lan{2 n_{k+1}}{1} (B) & <  C \, \frac{n_k}{n_{k+1}} \asymp C \,  \frac{\phi (n_{k+1})}{n_{k+1}} \,, \\
\abs{ \   \Lan{n_{k+1}}{1} (B)  + \Lan{n_k}{1} (B) - 2 \Lan{2 n_k}{1} (B)   \ }  & < C \, \frac{n_k}{n_{k+1}} \asymp C \mes_{n_k}^{1/2} \,.
\end{align*}
which completes the proof.
 \end{proof}


The following theorem shows that locally near any cocycle $A \in \cocycles_m$  for which $L_1 (A) > L_2 (A)$, the top Lyapunov exponent has a modulus of continuity given by a map that depends explicitly on a deviation measure function $\mesf$, hence on the strength of the large deviation type estimates satisfied by the dynamical system.

\begin{theorem}[modulus of continuity]\label{thm:modulus-cont}
Let $A \in \cocycles_m$ be a measurable cocycle for which $L_1 (A) > L_2 (A)$. 
There are $\delta = \delta (A) > 0$, $\mesf = \mesf (A) \in \mesfs$ and $c = c (A) > 0$ such that if we define the modulus of continuity function $\omega (h) :=  [ \mesf \ (c \, \log \frac{1}{h}) ]^{1/2}$, then for any cocycles $B_i \in \cocycles_m$ with $\dist (B_i, A) < \delta$, where $i = 1, 2$, we have:
\begin{equation}\label{eq:modulus-cont}
\abs{ L_1 (B_1) - L_1 (B_2) }  \le \omega (\dist (B_1, B_2) )  \,.
\end{equation}

More generally, if for some $1 \le k \le m$ the cocycle $A$ has the Lyapunov spectrum gap  $L_k (A) > L_{k+1} (A)$, then the map $\Lambda_k := L_1 + \ldots + L_k$ satisfies
\begin{equation}\label{eq:modulus-cont-k}
\abs{ \Lambda_k  (B_1) - \Lambda_k (B_2) }  \le \omega (\dist (B_1, B_2) )  \,.
\end{equation}
\end{theorem}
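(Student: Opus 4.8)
The plan is to deduce the block statement \eqref{eq:modulus-cont-k} from the top-exponent case \eqref{eq:modulus-cont} by passing to exterior powers, and to prove \eqref{eq:modulus-cont} by comparing the two cocycles at a single finite scale tuned to their distance. I would first fix $A$ with $L_1(A)>L_2(A)$ and choose $\delta_0>0$, constants $C,C_1>0$, a threshold $\nzz$, and a function $\mesf\in\mesfs$ (the sum of the two deviation measure functions supplied by the two propositions below, which lies in the cone $\mesfs$; take the larger of the two thresholds) so that, simultaneously on the ball $\dist(\cdot,A)<\delta_0$, both the uniform speed of convergence (Proposition~\ref{prop:speed-conv}) and the finite scale uniform continuity (Proposition~\ref{prop:finite-scale}) are available. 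For $B_1,B_2$ in that ball and any admissible scale $n$ I would use the three-term decomposition
\[
\abs{L_1(B_1)-L_1(B_2)}\ \le\ \sum_{i=1,2}\abs{L_1(B_i)-\Lan{n}{1}(B_i)}\ +\ \abs{\Lan{n}{1}(B_1)-\Lan{n}{1}(B_2)}\,,
\]
bounding the first two summands by $C\,\phi(n)/n\le\mes_{n-\!-}^{1/2}$ via Proposition~\ref{prop:speed-conv} (note $\Lan{n}{1}(B_i)\ge L_1(B_i)$ always, by Fekete's lemma applied to the subadditive sequence $\int_X\log\norm{B_i^{(n)}}\,d\mu$, so these are genuine absolute values), and the last summand by $\mes_n^{1/2}$ via Proposition~\ref{prop:finite-scale}, provided $\dist(B_1,B_2)<e^{-C_1 n}$.

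The decisive step is the choice of scale. Given $h:=\dist(B_1,B_2)$ small, I would take $n=n(h)$ to be the largest integer above $\nzz$ with $e^{-C_1 n}>h$, so $n\asymp C_1^{-1}\log\frac1h$; this is legitimate once $h<e^{-C_1\nzz}$, which I enforce by shrinking $\delta\le\delta_0$. With this choice the decomposition yields
\[
\abs{L_1(B_1)-L_1(B_2)}\ \less\ \mes_{n-\!-}^{1/2}+\mes_n^{1/2}\ \less\ \mes_{n-\!-}^{1/2}\,,
\]
the last step since $\mesf$ is decreasing and $n-\!-\le n$. Rewriting $n\asymp c^{-1}\log\frac1h$ and absorbing the implied multiplicative constant by replacing $\mesf$ with $C^2\mesf\in\mesfs$ (so that $\bigl(C^2\mesf(t)\bigr)^{1/2}=C\,\mesf(t)^{1/2}$) delivers $\abs{L_1(B_1)-L_1(B_2)}\le[\mesf(c\log\frac1h)]^{1/2}=\omega(h)$ for a suitable $\mesf=\mesf(A)\in\mesfs$ and $c=c(A)>0$; for $h$ bounded away from $0$ the estimate is trivial after enlarging $\mesf$.

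For the block $\Lambda_k=L_1+\dots+L_k$ under the gap hypothesis $L_k(A)>L_{k+1}(A)$, I would pass to $\wedge_k A\in\cocycles_{\binom{m}{k}}$: one has $\Lambda_k(B)=L_1(\wedge_k B)$ for every cocycle $B$, and $L_1(\wedge_k A)-L_2(\wedge_k A)=L_k(A)-L_{k+1}(A)>0$, so the case already proved applies to $\wedge_k A$ and to the exterior powers of $B_1,B_2$. Since Definition~\ref{def:cocycle:space} makes $B\mapsto\wedge_k B$ locally Lipschitz, near $A$ we have $\dist(\wedge_k B_1,\wedge_k B_2)\le L\,\dist(B_1,B_2)$ for some $L=L(A)$; feeding this into $\omega$ changes only the constant $c$ (as $\log\frac1{Lh}=\log\frac1h-\log L$), and shrinking $\delta$ keeps $\wedge_k B_i$ inside the neighborhood of $\wedge_k A$ where the $k=1$ estimate holds, giving \eqref{eq:modulus-cont-k}. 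I expect the main obstacle to be precisely the bookkeeping in the decisive step: quantitatively matching the exponentially small threshold $e^{-C_1 n}$ of the finite scale continuity against the convergence rate $\phi(n)/n$ of Proposition~\ref{prop:speed-conv} so that the two contributions genuinely collapse into a single modulus of the asserted form, together with the (routine but delicate) verification that all constants $\delta,C,C_1,\nzz$ and the function $\mesf$ can be chosen uniformly over one fixed neighborhood of $A$.
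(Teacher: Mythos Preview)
Your three-term decomposition is natural but loses one scale jump, and this costs you the theorem as stated. Proposition~\ref{prop:speed-conv} at scale $n$ gives $\Lan{n}{1}(B_i)-L_1(B_i)\le\mes_{n-\!-}^{1/2}$, so your conclusion is $\abs{L_1(B_1)-L_1(B_2)}\less\mes_{n-\!-}^{1/2}$, \emph{not} $\mes_n^{1/2}$. With $n\asymp c^{-1}\log(1/h)$ this reads $[\mesf(\phi(c^{-1}\log(1/h)))]^{1/2}$, which is not of the required form $[\tilde\mesf(c'\log(1/h))]^{1/2}$ for any $\tilde\mesf\in\mesfs$: the space $\mesfs$ is only assumed to be a convex cone, not closed under precomposition with $\phi$. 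Concretely, if $\mesf(t)=e^{-t}$ then $\phi(t)\asymp 2\log t$, so your modulus degenerates to $(\log(1/h))^{-1}$, whereas the theorem asserts (and the paper delivers) H\"older continuity $h^{c'}$. Your sentence ``Rewriting $n\asymp c^{-1}\log(1/h)$\ldots'' silently replaces $n-\!-$ by $n$; that is the error.

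The paper recovers the sharp modulus by a small detour: it applies the rate estimate~\eqref{eq:speed-conv} at the \emph{larger} scale $n+\!+$, where the bound becomes $\mes_{(n+\!+)-\!-}^{1/2}\asymp\mes_n^{1/2}$, and then uses the second conclusion~\eqref{topLE:n-n++-} of Proposition~\ref{prop:speed-conv} to write $\Lan{n+\!+}{1}(B_i)$ in terms of $\Lan{n}{1}(B_i)$ and $\Lan{2n}{1}(B_i)$ with error $\mes_n^{1/2}$. These two finite-scale quantities are then compared between $B_1$ and $B_2$ via Proposition~\ref{prop:finite-scale} applied at scales $n$ \emph{and} $2n$ (hence the threshold $\dist(B_1,B_2)<e^{-C_1\cdot 2n}$ rather than $e^{-C_1 n}$). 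Every term is now $O(\mes_n^{1/2})$, giving $\omega(h)=[\mesf(c\log(1/h))]^{1/2}$ as asserted. Your reduction of~\eqref{eq:modulus-cont-k} to~\eqref{eq:modulus-cont} via exterior powers is correct and matches the paper.
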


\begin{proof}
Choose parameters $\delta_0 = \delta_0 (A) > 0$, $\nzz = \nzz (A) \in \N$ and $\mesf = \mesf (A) \in \mesfs$ such that both the finite scale uniform continuity Proposition~\ref{prop:finite-scale} and the uniform speed of convergence Proposition~\ref{prop:speed-conv} apply with deviation measure function $\mesf$ for all cocycles $B \in \cocycles_m$ with $\dist (B, A) < \delta_0$ and for all $n \ge \nzz$.

Let $C_1 = C_1 (A) > 0$ be the constant from Proposition~\ref{prop:finite-scale}. 

Set $\delta := \min \{ \delta_0, \, \frac{1}{2} \, e^{- C_1 \, 4 \nzz} \}$.

Let $B_i \in \cocycles_m$ be measurable cocycles with $\dist (B_i, A) < \delta$  ($i = 1, 2$) and put $\dist (B_1, B_2) =: h  \ ( < 2 \delta \le e^{- C_1 \, 4 \nzz})$.

Set $n := \intpart{\frac{1}{2 C_1} \, \log (1/h)} \in \N$. 

Then $e^{- C_1 \, 4 n} \le h \le e^{-C_1 \, 2 n}$, so $\dist (B_1, B_2) = h \le e^{-C_1 \, 2 n}$  and $n \ge \nzz$.

\medskip

All of this preparation shows that we can apply the finite scale uniform continuity Proposition~\ref{prop:finite-scale} to $B_1, B_2$ at scales $n$ and $2 n$ and get:
\begin{align}
\abs{ \Lan{n}{1} (B_1) -  \Lan{n}{1} (B_2) } & < \mes_n^{1/2} \label{eq1:mod-cont} \,,\\
\abs{ \Lan{2 n}{1} (B_1) -  \Lan{2 n}{1} (B_2) } & < \mes_{2 n}^{1/2} < \mes_n^{1/2} \,. \label{eq2:mod-cont}
\end{align}

Since $\dist (B_i,  A) < \delta \le \delta_0$ and $n \ge \nzz$, we can also apply the uniform speed of convergence Proposition~\ref{prop:speed-conv} to $B_i$ \ ($i = 1, 2$) at scale $n$ and have:
\begin{align}
 \Lan{n+\!+}{1} (B_i) - L_1 (B_i) & <   \mes_{n}^{1/2}\label{eq3:mod-cont} \,, \\
 \abs{ \   \Lan{n+\!+}{1} (B_i)  + \Lan{n}{1} (B_i) - 2 \Lan{2 n}{1} (B_i)   \ }  & <  \mes_n^{1/2} \,. \label{eq4:mod-cont}
 \end{align}

Combining \eqref{eq1:mod-cont}, \eqref{eq2:mod-cont}, \eqref{eq3:mod-cont}, \eqref{eq4:mod-cont} we conclude:
\begin{align*}
 \abs{ L_1 (B_1) - L_1 (B_2) }  \less \mes_n^{1/2} 
 & \le [ \mesf \, ( 1 / (2 C_1)  \, \log (1/h) ) ]^{1/2} \\ 
 & =: \omega (h) = \omega (\dist (B_1, B_2)) \,. 
 \end{align*}

The more general assertion of the theorem follows by simply taking exterior powers. 
Indeed, the cocycle $\wedge_k A$ has the property 
$$L_1 (\wedge_k A) = (L_1 + \ldots + L_{k-1} + L_k) (A) > (L_1 + \ldots + L_{k-1} + L_{k+1} ) (A) = L_2 (\wedge_k A)\,,$$
hence \eqref{eq:modulus-cont-k} follows from \eqref{eq:modulus-cont} applied to $\wedge_k A$.  
\end{proof}

\bigskip

\subsection*{Acknowledgments}

The first author was supported by 
Funda\c{c}\~{a}o  para a  Ci\^{e}ncia e a Tecnologia, 
UID/MAT/04561/2013.

The second author was supported by the Norwegian Research Council project no. 213638, "Discrete Models in Mathematical Analysis".

\bigskip

\bibliographystyle{amsplain} 

\providecommand{\bysame}{\leavevmode\hbox to3em{\hrulefill}\thinspace}
\providecommand{\MR}{\relax\ifhmode\unskip\space\fi MR }
\providecommand{\MRhref}[2]{%
  \href{http://www.ams.org/mathscinet-getitem?mr=#1}{#2}
}
\providecommand{\href}[2]{#2}

\end{document}